\def\paragraph{\@startsection{paragraph}{4}%
  \z@\z@{-\fontdimen2\font}%
  \itshape}
\definecolor{RWTHblue}{RGB}{0,83,159}
\definecolor{RWTHbordeaux}{RGB}{161,16,53}
\numberwithin{equation}{section} 
\renewcommand{\ALG@name}{\sc Algorithm} 
\renewcommand{\email}[1]{\protect\href{mailto:#1}{#1}} 
\title[Asymptotic Log-Det Rank Minimization Via (Alternating) IRLS]{Asymptotic Log-Det Rank Minimization Via (Alternating) Iteratively Reweighted Least Squares}
\author{Sebastian Kr\"amer}
\thanks{Institut f\"ur Geometrie und Praktische Mathematik,  RWTH Aachen University,  Templergraben 55,
52056 Aachen, Germany
  (\email{kraemer@igpm.rwth-aachen.de}, \url{https://www.igpm.rwth-aachen.de}).}
\theoremstyle{plain}
\newtheorem{theorem}{Theorem}[section]
\newtheorem{definition}[theorem]{Definition}
\newtheorem{proposition}[theorem]{Proposition}
\newtheorem{corollary}[theorem]{Corollary}
\newtheorem{lemma}[theorem]{Lemma}
\theoremstyle{definition}
\newtheorem{example}[theorem]{Example}
\newtheorem{experiment}[theorem]{Experiment}
\theoremstyle{remark} 
\newtheorem{remark}[theorem]{Remark}
\crefname{experiment}{Experiment}{experiments}
\crefname{hypothesis}{Hypothesis}{Hypotheses}
\crefname{equation}{}{} 
\newcommand{\R}{\ensuremath{\mathbb{R}}}
\newcommand{\N}{\ensuremath{\mathbb{N}}}
\DeclareMathOperator{\diag}{diag}
\DeclareMathOperator{\rank}{rank}
\newcommand{\IRLS}{\textsc{IRLS}}
\newcommand{\AIRLS}{{\sc AIRLS}}
\newcommand{\SALSA}{{\sc SALSA}}
\def\alg{{(\mathrm{alg})}}
\def\rs{{(\mathrm{rs})}}
\def\temporarycaptioncommand{}
\def\externaltablecaption#1#2#3{} 
\newcommand{\externaltable}[3]%
{%
\begin{table}[#1]%
\input{tables/#2_caption.tex}
\tiny
\begin{center}%
\input{tables/#2.tex}%
\end{center}%
\caption{\temporarycaptioncommand{} -- #3}
\end{table}\noindent
}
\def\barfigurecaption#1#2{} 
\def\buttonfigurecaption#1#2{} 
\newcommand{\pdffigure}[3]%
 {%
 \begin{figure}[#1]%
  \begin{center}%
    \includegraphics[width=0.98\textwidth]{external_pdf_files/#2.pdf}%
  \end{center}%
  \vspace{-0.5cm}%
  \caption{#3}%
\end{figure}\noindent
}
\begin{document}

\begingroup
%
\begin{abstract}
    The affine rank minimization (ARM) problem is well known for both its applications and the fact that it is NP-hard. One of the most successful approaches, yet arguably underrepresented, is iteratively reweighted least squares (\IRLS{}), more specifically \IRLS{}-$0$. Despite comprehensive empirical evidence that it overall outperforms nuclear norm  minimization and related methods, it is still not understood to a satisfying degree. In particular, the significance of a slow decrease of the therein appearing regularization parameter denoted $\gamma$ poses interesting questions. While commonly equated to matrix recovery, we here consider the ARM independently. We investigate the particular structure and global convergence property behind the asymptotic minimization of the log-det objective function on which \IRLS{}-$0$ is based. We expand on local convergence theorems, now with an emphasis on the decline of $\gamma$, and provide representative examples as well as counterexamples such as a diverging \IRLS{}-$0$ sequence that clarify theoretical limits. We present a data sparse, alternating realization \AIRLS{}-$p$ (related to prior work under the name \SALSA{}) that, along with the rest of this work, serves as basis and introduction to the more general tensor setting. In conclusion, numerical sensitivity experiments are carried out that reconfirm the success of \IRLS{}-$0$ and demonstrate that in surprisingly many cases, a slower decay of $\gamma$ will yet lead to a solution of the ARM problem, up to the point that the exact theoretical phase transition for generic recoverability can be observed. Likewise, this suggests that non-convexity is less substantial and problematic for the log-det approach than it might initially appear.

\end{abstract}
\maketitle
\textbf{Key words.}
affine rank minimization, iteratively reweighted least square, matrix recovery, matrix completion, log-det function

\smallskip
\textbf{AMS subject classifications.}
15A03, 
15A29, 
65J20, 
90C31, 
90C26
\smallskip
\section{Introduction}\label{sec:intro}
Affine rank minimization (ARM) is, as the name suggests, the problem of finding a minimum rank matrix within an affine set, which may be provided in form of an underdetermined, linear equation. Given a linear operator $\mathcal{L}: \R^{n \times m} \rightarrow  \R^\ell$, for $n,m,\ell \in \N$ with $\ell < nm$, and a vector $y \in \mathrm{image}(\mathcal{L})$ referred to as measurements, we thus seek to find
\begin{align}\label{nosurrogate}
 X^\ast \in \underset{X \in \R^{n \times m}}{\mathrm{argmin}} \ \mathrm{rank}(X) \quad \mbox{subject to} \quad \mathcal{L}(X) = y.
\end{align}
Due to its numerous potential applications (for recent surveys, see \cite{DaRo16_AnO,NgKiSh19_Low})
, this setting is commonly equated to the matrix recovery problem, in which one desires to reconstruct a ground truth $X^{(\mathrm{true})}$ from measurements $y = \mathcal{L}(X^{(\mathrm{true})})$. Theoretical analysis of the latter however necessarily involves the question whether the found solution is in fact the sought recovery, which changes the perspective on the issue. We are here mainly interested in finding one minimizer $X^\ast$, regardless of whether it poses any sort of reconstruction. We leave the latter as the separate problem as which it can be viewed, though additionally attend to it in the numerical experiments.
\subsection{Approaches to ARM and matrix recovery}
A theoretical predecessor to the ARM problem is the closely related affine cardinality minimization (ACM), where (for in that case $\mathcal{L}: \R^n \rightarrow \R^\ell$) one seeks 
\begin{align}\label{acm}
 x^\ast \in \underset{x \in \R^n}{\mathrm{argmin}} \ \mathrm{card}(x) \quad \mbox{subject to} \quad \mathcal{L}(x) = y,
\end{align}
where $\mathrm{card}(x)$ is the number of non-zero coefficients of the vector. 
Like that problem, also ARM \cref{nosurrogate} is in general NP-hard \cite{Na95_Spa} and requires alternative, indirect approaches, where many of such are derived from the vector case. Analogous to the convex relaxation of the cardinality to the $\ell_1$-norm $\|x\|_1 = \sum_i |x_i|$, in the matrix version one replaces the rank function with the nuclear norm $\|X\|_\ast = \sum_{i = 1}^n \sigma_i(X)$. This approach has a long and well known history \cite{CaRe09_Exa,CaTa10_The,Gr11_Rec,Re11_ASi}, one of the earlier works being \cite{Fa02_Mat}.
Closely related to that relaxation is the so called null space property \cite{ReXuHa11_Nul,OyMoFaHa11_Asi}, the restricted isometry property \cite{ReFaPa10_Gua} as well as
the incoherence property \cite{CaTa10_The}. These have been major tools to derive lower, asymptotic bounds that guarantee successful recoveries with high probability. Optimization algorithm for nuclear norm minimization include methods (some of them very similar) known as soft thresholding \cite{CaCaSh10_ASi}, fixed-point continuation \cite{GoMa11_Con} and more \cite{LeBr10_Ato,HaMaLeZa15_Mat,KeMoOh10_Mat,KeMoOh10_Mat2}. Also iteratively reweighted least squares (\IRLS{}) has been applied \cite{FoRaWa11_Low}. Yet despite its theoretical rigorosity, mere $\ell_1$- or nuclear norm minimization seems to be outperformed by reweighted versions based on empirical results \cite{DaDeFoGu10_Ite,MoFa12_Ite,Ch08_Res,ChWo08_Ite} as we also observe in our numerical experiments in \cref{sec:numexp}.
Other methods, usually for matrix completion, rely on explicit rank estimates\footnote{At the latest in the setting of tensor recovery, a non invasively performed adaption of the rank poses an obstructive problem \cite{GrKr19_Sta}.} and usually optimize some data sparse model. This includes Riemannian optimization \cite{Va13_Low}, nonlinear successive overrelaxation \cite{XuZiWeZh12_Ana}, hard thresholding \cite{TaWe13_Nor} and others \cite{BaNoRe10_Onl,DaMiKe11_Sub}.
\subsection{Asymptotic minimization}\label{sec:asm}
We are not particularly concerned with assumptions that guarantee recoveries, but the general structure as well as convergence behavior of the herein laid out approach.
Essential therefor is the following family of objective functions, 
\begin{align} \label{matrixsur}
 f_\gamma(X) := \log \prod_{i = 1}^{n} (\sigma_i^2(X) + \gamma) = \log \det(X X^T + \gamma I), \quad \gamma \geq 0,
\end{align}
where $\sigma_i(X)$, $i = 1,\ldots,n$, are the singular values of $X \in \R^{n \times m}$, assuming without loss of generality $n \leq m$. When relevant, their domains are considered to be
\begin{align*}
 \mathrm{dom}(f_\gamma) = \mathcal{L}^{-1}(y)  = \{ X \in \R^{n \times m} \mid \mathcal{L}(X) = y\},
\end{align*}
for $\gamma > 0$, or, less importantly, $\mathrm{dom}(f_0) = \{ X \in \mathcal{L}^{-1}(y) \mid \mathrm{rank}(X) = n\}$.
This exact family has been previously considered for matrices in \cite{MoFa12_Ite} and closely related versions have been analyzed in \cite{Fa02_Mat,FaHiBo03_Log,MoFa10_Rew}.
These functions on the one hand follow as a certain least squares based generalization from vector reweighted $\ell_1$-minimization \cite{CaWaBo08_Enh}. Likewise, they can be derived as limit cases of the smoothed Schatten-$p$ functions \cite{MoFa12_Ite}, as similarly considered in the vector case \cite{DaDeFoGu10_Ite}, based on
\begin{align}\label{Schattenp}
 \lim_{p \rightarrow 0} \frac{S_{\gamma,p}(X) - {n}}{p} & = \frac{1}{2} f_{\gamma}(X) , \quad S_{\gamma,p}(X) := \sum_{i = 1}^{n} (\sigma_i^2(X) + \gamma)^{\frac{p}{2}}, \quad 0 < p \leq 1.
\end{align}
The one extreme case, $p = 1$, yields a smoothed version of the nuclear norm and is comprehensively covered in \cite{FoRaWa11_Low}. The opposing limit case, in which we are interested, denoted with $p = 0$, exhibits a distinct difference to all other choices of that parameter, on which we emphasize in the following remark. 
\begin{remark}\label{diffSf}
While minimizing $S_{\gamma,p}$ regardless of $0 < p \leq 1$, one ultimately searches for minimizers of $S_{0,p}$. Yet in general $f_0$ is not even to be minimized theoretically. In particular, every rank deficient matrix $X \in \mathcal{L}^{-1}(y)$ is already a minimizer of $\mathrm{det}(XX^T)|_{\mathcal{L}^{-1}(y)}$.
\end{remark}
However, we neither search for some specific $\gamma > 0$, 
but are interested in the asymptotic behavior of the minimizers for $\gamma \searrow 0$. 
We thus seek the following set.
\begin{definition} \label{Xastdef}
We define
 \begin{align*}
 \mathcal{X}^{\ast} := \{ X^\ast \mid \exists (X_\gamma)_{\gamma > 0} \subset \mathcal{L}^{-1}(y), \ X^\ast = \lim_{\gamma \searrow 0} X_\gamma, \ f_{\gamma}(X_\gamma) = \min_{X \in \mathcal{L}^{-1}(y)} f_{\gamma}(X)\}.
\end{align*}
\end{definition}
The importance of this process is also partially remarked on in \cite{MoFa12_Ite} and we underline in \cref{convergencelemma} that $\mathcal{X}^\ast$ indeed yields solutions to the ARM problem \cref{nosurrogate}. Naturally, as that set is defined via global minimizers of non convex functions, the challenge remains to find such and to theoretically or practically determine the chances of doing so.
The here considered $\{f_\gamma\}_{\gamma > 0}$ is not the only reasonable family, but it exhibits a particular structure (cf. \cref{sec:detexp}) and can be argued as natural choice (cf. \cref{sec:augmmatrix,rewidea}).
\subsection{Iteratively reweighted least squares (IRLS-p)}
The above mentioned maps $S_{\gamma,p}$ and $f_\gamma$, respectively, are of
particular practical interest since they can efficiently be minimized using so called iteratively reweighted least squares as covered in \cite{MoFa12_Ite,FoRaWa11_Low} as well as \cref{declinelemma}. Due to its dependency on a fixed weight strength parameter $p \in [0,1]$, the method is also abbreviated \IRLS{}-$p$.
When independently derived for ACM, the notion is to adaptively weight the entries of the vector iterate in order to balance out their magnitudes, thus bringing the process closer to the original cardinality. While there are different versions, one \cite{DaDeFoGu10_Ite} states to repeatedly solve
\begin{align}\label{vanillvectorairls}
 x^{(i)} = \underset{x \in \mathcal{L}^{-1}(y)}{\mathrm{argmin}} \ \|W_{\gamma^{(i-1)},x^{(i-1)}}^{1/2} x\|_2, \quad W_{\gamma,x} := \mathrm{diag}(x_1^2+\gamma,\ldots,x_n^2+\gamma)^{p/2-1},
\end{align}
for a monotonically decreasing sequence $\{\gamma^{(i)}\}_{i \geq 0} \subset \R_{>0}$.
In the limit $\gamma \searrow 0$, the $i$-th weight on the diagonal of $W^{1/2}_{\gamma,x}$ converges to $|x_i|^{p/2-1}$, or diverges to infinity, respectively. Thus, when viewing the non-zero part $x_{\neq 0} \in \R^{\mathrm{card(x)}}$, we have
\begin{align*}
 \|W^{1/2}_{0,x_{\neq 0}} x_{\neq 0}\|_2^2 = \sum_{i \in \mathrm{supp}(x)} |x_i|^p = \begin{cases}
                                                                                      \mathrm{card}(x), & p = 0 \\
                                                                                      \|x\|_1, & p = 1.
                                                                                     \end{cases}
\end{align*}
Our version for matrices \cite{MoFa12_Ite} is the analogous \cref{alg:irlsmr}, namely
\begin{align}\label{vanillairls}
 X^{(i)} = \underset{X \in \mathcal{L}^{-1}(y)}{\mathrm{argmin}} \ \|W_{\gamma^{(i-1)},X^{(i-1)}}^{1/2} X\|_F, \quad W_{\gamma,X} := (X X^T + \gamma I)^{p/2-1},
\end{align}
where $\|\cdot\|_F$ is the Frobenius norm.
The reasoning here is the same. For a sequence $X_\gamma \rightarrow \overline{X}$ with \textit{sufficiently fast} declining singular values $\sigma_i(X_\gamma)$, $i = 1,\ldots,n$, one has
\begin{align}\label{rewidea}
 \|W_{\gamma,X_\gamma}^{1/2} X_\gamma\|^2_F = \sum_{i = 1}^{n}
 \frac{\sigma_i(X_\gamma)^2}{(\sigma_i(X_\gamma)^2 + \gamma)^{1-p/2}} \underset{\gamma \searrow 0}{\longrightarrow} \sum_{i = 1}^{\mathrm{rank}(\overline{X})} \sigma_i(\overline{X})^p \overset{p = 0}{=} \mathrm{rank}(\overline{X}).
\end{align}
As above, one may also only view the non-zero singular values, whereby the term becomes continuous in $\gamma = 0$. Unfortunately, as it analogously holds true for the vector case, one can not in turn assume that the singular values of $X^{(i)}$ are in fact caused to decline fast enough to actually allow for \cref{rewidea}, so care has to be taken. What is here written as reweighting process \cref{vanillairls}, is also motivated as majorization-minimization algorithm for the vector case in \cite{CaWaBo08_Enh} and, similarly, as iterative linearization and minimization scheme for the matrix case in \cite{FaHiBo03_Log} as well as through entropy minimization in \cite{RaKr99_Ana}. So called log-thresholding has further appeared in \cite{MaAr14_Ite}.
While the to be minimized function is only convex for $p = 1$, our numerical test in \cref{sec:numexp} as well as the results in \cite{MoFa12_Ite,Ch08_Res,ChWo08_Ite} do suggest that $p = 0$ (representing $f_\gamma$) nevertheless seems to be the overall best choice. It is worth noting that given $W_{\gamma,X}$, it is possible to reconstruct $X X^T$, analogously to the vector case. So the iterate is tightly bound by the weights.
\subsection{Data sparse optimization}
Naturally, besides the issue of the theoretical convergence of \IRLS{}, also the computational complexity and possibly required relaxations are of importance. In \cite{HaMaLeZa15_Mat}, a gradient based projection method is considered for the matrix completion problem, whereas in \cite{FoRaWa11_Low}, the Woodbury matrix inversion lemma is utilized to more efficiently solve the reweighted least squares problems for separable operators. The authors of \cite{GiRoKo19_Alt} in turn analyze the factorized optimization of low rank matrices based on a tight upper bound for Schatten-$p$ functions. In order to realize the presented \IRLS{} method with low computational complexity, we consider an alternating version also based on the low rank matrix decomposition, which however directly minimizes the relaxated objective function $f_\gamma$.
\subsection{Contributions and organization of this paper}
The novel aspects of this paper are organized as follows:
\begin{itemize}
\item We consider affine rank minimization independently of recovery properties and remark on the possible degeneracy behind the problem in \cref{singularlemma}. 
\item  In \cref{sec:globalstructure}, we investigate the particular structure behind the asymptotic minimization approach (for $p = 0$) and prove global convergence by means of a more general, nested minimization scheme in \cref{convergencelemma}.
\item In \cref{sec:irls}, we expand on convergence statements about \IRLS{}-$0$ based on the insight from \cite{FoRaWa11_Low,MoFa12_Ite}, where here also \textit{complementary} weights are considered (see \cref{sec:mirror}). The decisive role of the sequence $\{\gamma^{(i)}\}_{i \geq 0}$, in particular both the necessity and sufficiency of a sufficiently slow decay, is illustrated theoretically and by representative, comprehensive examples in \cref{sec:algasp}. In \cref{divergencelemma}, it is shown that the iterates $X^{(i)}$ (see  \cref{vanillairls}) can in fact diverge, though this seems neglectable in practice.
\item In \cref{reloptmatrix}, we present an alternating \IRLS{}-$p$ method (\AIRLS{}-$p$, \cref{alg:datasparsematrix}) with low computational complexity, based on according switching between complementary weights (cf. \cref{sec:mirror}).
\item In \cref{sec:numexp}, numerical experiments, we reconfirm that the choice of the weight parameter $p = 0$ appears overall optimal.
We proceed sensibility tests with respect to each a constant rate of decline of $\gamma^{(i)} := \nu \gamma^{(i-1)}$, which expand on the results in \cite{MoFa12_Ite} and reveal that for some matrix problems, an excessively
slow decay may yet yield the desired solution.
We further demonstrate in \cref{exp2} that the relaxation of the affine constraint to a weighted penalty term appears to only marginally reduce the quality of minimizers, while the data sparse \AIRLS{}-$0$ in fact seems to improve upon the results.
\end{itemize}
\subsection{Degenerate ARM problems}\label{sec:degenerate}
As indicated above, we are not particularly interested in specific assumptions towards the ARM problem that guarantee some form of success, so that this remains subject to future research.
We however take note of the following issue since it is also related to the general convergence behavior of the approach considered in this work.
\begin{proposition}\label{singularlemma}
There exists a linear operator $\mathcal{L}$ and $y \in \mathrm{image}(\mathcal{L})$, such that 
there exists a sequence $\{X^{(k)}\}_{k = 1}^\infty \subset \mathcal{L}^{-1}(y)$ with
   \begin{align}
      \lim_{k \rightarrow \infty} \sigma_r(X^{(k)}) = 0, \quad \mbox{even though} \quad r = \min_{X \in \mathcal{L}^{-1}(y)} \rank(X). \label{singularproblem}
   \end{align}
\end{proposition}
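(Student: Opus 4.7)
The plan is to produce an explicit counterexample in the smallest nontrivial matrix completion setting. Take $n = m = 2$, define the linear map $\mathcal{L}\colon \R^{2\times 2} \to \R^3$ by $\mathcal{L}(X) := (X_{1,1},\, X_{1,2},\, X_{2,1})$, and set $y := (0,1,1) \in \mathrm{image}(\mathcal{L})$. Then $\mathcal{L}^{-1}(y)$ is the one-parameter family
\begin{align*}
    \mathcal{L}^{-1}(y) \;=\; \left\{\, X(x) \,:=\, \begin{pmatrix} 0 & 1 \\ 1 & x \end{pmatrix} \;\middle|\; x \in \R \,\right\}.
\end{align*}

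I would first verify that the minimum rank really is $r = 2$: since $\det X(x) = -1$ for every $x \in \R$, each element of $\mathcal{L}^{-1}(y)$ is invertible and therefore has rank $2$. Next, setting $X^{(k)} := X(k)$ for $k \in \N$, the identities
\begin{align*}
    \sigma_1\bigl(X^{(k)}\bigr)\,\sigma_2\bigl(X^{(k)}\bigr) \;=\; \bigl|\det X^{(k)}\bigr| \;=\; 1, \qquad \sigma_1\bigl(X^{(k)}\bigr)^2 + \sigma_2\bigl(X^{(k)}\bigr)^2 \;=\; \bigl\|X^{(k)}\bigr\|_F^2 \;=\; 2 + k^2
\end{align*}
force $\sigma_1(X^{(k)}) \to \infty$ and hence $\sigma_2(X^{(k)}) = 1/\sigma_1(X^{(k)}) \to 0$, which is exactly the claimed behavior for $r = 2$.

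No substantive obstacle is anticipated; the construction is essentially a degrees-of-freedom observation in the completion problem, where the observed $2 \times 2$ submatrix is already nonsingular while the free entry can be sent to infinity. What is worth remarking, however, is that any such example is necessarily unbounded: since $\{X \in \R^{n\times m} \mid \rank(X) \leq r - 1\}$ and $\mathcal{L}^{-1}(y)$ are both closed, a bounded sequence with $\sigma_r \to 0$ would admit a convergent subsequence whose limit lies in $\mathcal{L}^{-1}(y)$ with rank at most $r-1$, contradicting the definition of $r$. This explains why the counterexample must exploit the non-compactness of $\mathcal{L}^{-1}(y)$, as it indeed does through $k \to \infty$.
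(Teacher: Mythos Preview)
Your proof is correct and arguably simpler than the paper's. The paper constructs a two-parameter affine family in $\R^{2\times 3}$,
\[
X(a,b)=\begin{pmatrix} a & 1 & a+1\\ b+1 & b & b+1\end{pmatrix},
\]
verifies via an analysis of the stationary points of $f_0$ that the minimum feasible rank is $r=2$, and then takes $(a_k,b_k)=(k,1/k)$ so that $\det\bigl(X^{(k)}(X^{(k)})^T\bigr)\to 1$ while $\sigma_1(X^{(k)})\to\infty$, forcing $\sigma_2(X^{(k)})\to 0$. Your $2\times 2$ completion example achieves the same conclusion with a one-line determinant check and no need to analyze $f_0$ at all.

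Two points of comparison are worth noting. First, your example is a genuine sampling (matrix completion) operator, whereas the paper's constraints couple entries linearly and are not of sampling type; the paper in fact remarks just after the proposition that ``despite reasonable effort, we could not find an instance of a sample based (or matrix completion) problem'' exhibiting this behavior, so your construction also settles that side question. Second, the paper's more elaborate example is not chosen only for this proposition: it is reused later (\cref{divergencelemma}) to show that the \IRLS{}-$0$ iterates themselves can diverge, and its valley structure in $f_0$ is analyzed in detail. Your example suffices for the proposition as stated but would not immediately serve those further purposes. Your closing remark that any such sequence must be unbounded is correct and a nice addition.
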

\begin{proof}
 See \cref{divexample}.
\end{proof}
The sequence $X^{(k)}$ necessarily diverges. By \cref{singularproblem}, it directly follows that for the sequence of best rank $r-1$ approximations $X^{(k)}_{r-1}$, it holds true that
$\|X^{(k)} - X^{(k)}_{r-1}\| \rightarrow 0$ as well as $\|\mathcal{L}(X^{(k)}_{r-1}) - y\|_F \rightarrow 0$.
Note that it is trivial to construct problems where such sequences do not exist, and that it might be the generic case. In particular, at least despite reasonable effort, we could not find an instance of a sample based (or matrix completion) problem (cf. \cref{samplop}) that
allows for \cref{singularproblem} to happen. We provide the proof of \cref{singularlemma} in form of the following example.
\begin{example}\label{divexample}
 Let $\mathcal{L}: \R^{2 \times 3} \rightarrow \R^4$ be a linear operator and $y \in \R^4$ such that
 \begin{align*}
  \mathcal{L}^{-1}(y) = \{ X(a,b) \mid a,b \in \R \}, \quad X(a,b) := \begin{pmatrix}
                                                                       a & 1 & a + 1 \\
                                                                       b + 1 & b & b + 1
                                                                      \end{pmatrix}.
 \end{align*}
 Assume now that $\gamma = 0$. The function $f_{0}: \mathcal{L}^{-1}(y) \rightarrow \R$ (that is, for $\gamma = 0$\footnote{The value $\gamma = 0$ is for once legitimate as here $\mathcal{L}^{-1}(y)$ does not contain rank deficient matrices.}) has two stationary points. The (only) global minimum is at 
 \begin{align*}
  X = X(-1,-2/3) = \begin{pmatrix}
                                                                       -1 & 1 & 0 \\
                                                                       \frac{1}{3} & -\frac{2}{3} & \frac{1}{3}
                                                                      \end{pmatrix}, \quad \det(X X^T) = \frac{1}{3},
 \end{align*}
 and the only local maximum is at
 \begin{align*}
   X = X(1,0) = \begin{pmatrix}
                                                                       1 & 1 & 2 \\
                                                                       1 & 0 & 1
                                                                      \end{pmatrix}, \quad \det(X X^T) = 3.
 \end{align*}
 Thus, 
 $r = 2 = \min_{X \in \mathcal{L}^{-1}(y)} \rank(X)$.
However, for $(a_k,b_k) = (k,1/k)$ we have
\begin{align*}
 X^{(k)} = X(a_k,b_k) = \begin{pmatrix}
                                                                       k & 1 & k+1 \\
                                                                       1 + \frac{1}{k} & \frac{1}{k} & 1 + \frac{1}{k}
                                                                      \end{pmatrix}, \quad \det(X^{(k)} (X^{(k)})^T) = \frac{k^2 + 2k + 2}{k^2} \searrow 1.
\end{align*}
As clearly $\sigma_1(X^{(k)}) \rightarrow \infty$, it must at the same time hold true that
$\sigma_2(X^{(k)}) \rightarrow 0$.
So while there is no rank $1$ solution to $\mathcal{L}(X) = y$, there exists a sequence of matrices $X^{(k)}$ that
can arbitrarily closely be approximated by rank $1$ matrices $X_1^{(k)}$ for which further $\mathcal{L}(X_1^{(k)}) \rightarrow y$.
 In more detail, the points 
 \begin{align*}
  \{(a(b),b)\}_{b \neq 0} \subset \R^2, \quad a(b) := (3b+4+\sqrt{9 b^2+4})/(6b),
 \end{align*}
describe two disconnected valleys. The part $b < 0$ contains the global
 minimum of $f_0$, while $\lim_{b \searrow 0} a(b)\cdot b = 1$ and $\det(X(a(b),b) X(a(b),a)^T)$ is monotonically decreasing with limit $1$. For $b > 0$ small enough,
 there is thus no continuous path with monotonically decreasing function values from $(a(b),b)$ to any stationary point.
 \end{example}
The ARM problem \cref{divexample} also yields an instance of a divergent sequence for the \IRLS{} algorithm, as we discuss in detail in \cref{divergencelemma}. 
\section{Underlying structure and global behavior}\label{sec:globalstructure}
In this section, we investigate the structure behind the ARM problem and the behavior of the approach suggested by \cref{Xastdef}. Both affine cardinality \cref{acm} and rank minimization \cref{nosurrogate} can be attributed to a more general problem to find
\begin{align}\label{Xastgeneral}
 v^\ast \in \underset{v \in \mathcal{L}^{-1}(y)}{\mathrm{argmin}}\ \mathcal{C}_{\mathcal{V}}(v), \quad \mathcal{C}_{\mathcal{V}}(v) := \min_{V \in \mathcal{V}:\ v \in V} \mathrm{dim}(V),
\end{align}
where $\mathcal{V}$ is a family of varieties.
In the vector case, we simply have
\begin{align}\label{vectorV1}
 \mathcal{V}_1 = \{ V_S \mid S \subseteq \{1,\ldots,n\} \}, \quad V_S := \{x \in \R^n \mid \mathrm{supp}(x) \subseteq S\},
\end{align}
with $\mathrm{dim(V_S)} = |S|$, 
whereas the ARM problem is given by
\begin{align}
  \mathcal{V}_2 = \{ V_{\leq r} \mid r \in \N_0\}, \quad V_{\leq r}  := \{X \in \R^{n \times m}\mid \mathrm{rank}(X) \leq r\},
  \label{matrixvarieties}
\end{align}
for which one has $\mathrm{dim}(V_{\leq r}) = (n+m)r - r^2$.
Cardinality as opposed to rank minimization distinguish each other through the fact that $ \widetilde{V} \subsetneq V \Leftarrow \mathrm{dim}(\widetilde{V}) < \mathrm{dim}(V)$, $V \in \mathcal{V}$, holds true only in the matrix case.
As described in the following \cref{sec:detexp}, the log-det function $f_\gamma$ can be expanded as Taylor series in $\gamma$ into polynomials that reflect the above mentioned varieties (the same is possible for the vector version). We use that determinant expansion to prove the convergence result \cref{convergencelemma} essentially only relying on \cref{Xastgeneral}. A less general, but more direct proof is additionally given in \cref{sec:minorproofs}.
\subsection{Determinant expansion}\label{sec:detexp}
We define $\mathcal{P}_s([k]) := \{ I \subseteq \{1,\ldots,k\} \mid |I| = s\}$.
As indicated above, the function $f_\gamma$ can be expanded as Taylor series in $\gamma$.
\begin{proposition}
Let $X \in \R^{n \times m}$, $n \leq m$, and $\gamma \geq 0$. Then
\begin{align*}
 \det(X X^T + \gamma I) & = \gamma^{n} + \sum_{k = 1}^{n} \gamma^{n-k} \cdot \mathrm{det}^2_k(X), 
\end{align*}
where 
\begin{align*}
  \mathrm{det}^2_k(X) & := \sum_{I \in \mathcal{P}_k([n])} \sum_{J \in \mathcal{P}_k([m]) } \det(X_{I,J})^2,
\end{align*}
for $X_{I,J} := \{X_{i,j}\}_{i \in I,j \in J} \in \R^{|I| \times |J|}$.
\end{proposition}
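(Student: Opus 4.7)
The plan is to decompose the identity into two well-known ingredients: (i) the characteristic-polynomial-style expansion of $\det(A+\gamma I)$ for a symmetric matrix $A$ in terms of its principal minors, and (ii) the Cauchy–Binet formula applied to each principal minor of $A = XX^T$.

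First, I would recall that for any $n \times n$ matrix $A$, the determinant $\det(\gamma I + A)$, viewed as a polynomial in $\gamma$, is the reverse characteristic polynomial and expands as
\[
\det(\gamma I + A) = \sum_{k=0}^n \gamma^{n-k} e_k(A),
\]
where $e_k(A)$ denotes the $k$-th elementary symmetric polynomial in the eigenvalues of $A$, equivalently the sum of all $k \times k$ principal minors of $A$:
\[
e_k(A) = \sum_{I \in \mathcal{P}_k([n])} \det(A_{I,I}).
\]
The term $k=0$ gives $\gamma^n$, matching the leading term of the claim.

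Second, with $A = XX^T$, I observe that each principal submatrix factors as $(XX^T)_{I,I} = X_{I,[m]} X_{I,[m]}^T$, since selecting the same row/column index set $I$ from the product corresponds to selecting the rows indexed by $I$ from $X$ and $X^T$. Then by the Cauchy–Binet formula applied to the $k \times m$ matrix $X_{I,[m]}$ and the $m \times k$ matrix $X_{I,[m]}^T$,
\[
\det\bigl(X_{I,[m]} X_{I,[m]}^T\bigr) = \sum_{J \in \mathcal{P}_k([m])} \det(X_{I,J})^2.
\]
Summing over $I \in \mathcal{P}_k([n])$ recovers exactly $\det_k^2(X)$, so $e_k(XX^T) = \det_k^2(X)$, and plugging back into the first expansion yields the desired formula.

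There is no real obstacle here; both ingredients are standard and combine cleanly. The only step that warrants a little care is keeping track of indexing conventions (namely that $X_{I,[m]}$ is a $|I| \times m$ block, to which Cauchy–Binet applies precisely when $|I| \leq m$, which holds by our hypothesis $n \leq m$). The case $\gamma = 0$ is also included: the $k=n$ term recovers the standard identity $\det(XX^T) = \sum_J \det(X_{[n],J})^2$, while all lower-order terms in $\gamma$ vanish, as expected.
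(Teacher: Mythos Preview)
Your proposal is correct and follows essentially the same approach as the paper: expand $\det(\gamma I + XX^T)$ as a polynomial in $\gamma$ with coefficients given by sums of principal minors, then apply the Cauchy--Binet formula to each principal minor of $XX^T$. Your write-up is in fact slightly more explicit about the intermediate factorization $(XX^T)_{I,I} = X_{I,[m]} X_{I,[m]}^T$ and about where the hypothesis $n \leq m$ enters, but the argument is the same.
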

\begin{proof}
 It is well known that $\det(A+\gamma I)$ is a polynomial in $\gamma$, that is
 \begin{align*}
  \det(X X^T + \gamma I) = \gamma^{n} + \sum_{k = 1}^{n} \gamma^{n-k} \cdot \sum_{J \in \mathcal{P}_k([n])} \det((X X^T)_{J,J}).
 \end{align*}
By the Cauchy-Binet formula, the latter term can be written as
\begin{align*}
 \det((X X^T)_{I,I}) = \sum_{J \in \mathcal{P}_k([n])} \det(X_{I,J}) \det((X^T)_{J,I}),
\end{align*}
whereas $\det(X_{I,J}) \det((X^T)_{J,I}) = \det(X_{I,J})^2$.
\end{proof}
As indicated earlier, the $k$-th summand determines membership towards the variety $V_{\leq k-1}$ defined in \cref{matrixvarieties}, since $\mathrm{det}^2_k(X) = 0  \Leftrightarrow  \rank(X) < k  \Leftrightarrow  X \in V_{\leq k-1}$. This well known equivalence is also proven through the following corollary.
\begin{corollary}\label{detktosingval}
 For each $k = 1,\ldots,m$, it is $\mathrm{det}^2_k(X) = \sum_{I \in \mathcal{P}_k([n])} \prod_{i \in I} \sigma_i(X)^2$. Thus, if $X$ is rank $r$, then $\mathrm{det}^2_r(X) = \prod_{i = 1}^r \sigma_i(X)^2$.
\end{corollary}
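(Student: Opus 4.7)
The plan is to match two different expansions of the same determinant polynomial $\det(XX^T+\gamma I)$ as a polynomial in $\gamma$, and then read off the coefficients.

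First I would recall that by the spectral theorem applied to the symmetric positive semidefinite matrix $XX^T$, its eigenvalues are exactly $\sigma_1(X)^2,\ldots,\sigma_n(X)^2$. Therefore the eigenvalues of $XX^T+\gamma I$ are $\sigma_i(X)^2+\gamma$, and we obtain the product formula
\begin{align*}
 \det(XX^T+\gamma I) \;=\; \prod_{i=1}^{n}(\sigma_i(X)^2+\gamma).
\end{align*}
Expanding this product (elementary symmetric polynomial expansion) gives
\begin{align*}
 \prod_{i=1}^{n}(\sigma_i(X)^2+\gamma) \;=\; \sum_{k=0}^{n} \gamma^{\,n-k} \sum_{I\in\mathcal{P}_k([n])} \prod_{i\in I} \sigma_i(X)^2.
\end{align*}

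Next I would invoke the previous proposition, which provides the second expansion
\begin{align*}
 \det(XX^T+\gamma I) \;=\; \gamma^{n} + \sum_{k=1}^{n} \gamma^{\,n-k}\cdot \mathrm{det}^2_k(X).
\end{align*}
Since both expressions are polynomials in $\gamma$ that coincide for all $\gamma\geq 0$, their coefficients must agree. Comparing the coefficient of $\gamma^{\,n-k}$ yields
\begin{align*}
 \mathrm{det}^2_k(X) \;=\; \sum_{I\in\mathcal{P}_k([n])} \prod_{i\in I} \sigma_i(X)^2,
\end{align*}
which is the first assertion. The $k=0$ term simply reproduces the leading $\gamma^n$.

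For the second assertion, assume $\rank(X)=r$, so that $\sigma_{r+1}(X)=\cdots=\sigma_n(X)=0$. In the just-derived identity applied at $k=r$, every index set $I\in\mathcal{P}_r([n])$ other than $I=\{1,\ldots,r\}$ contains at least one index $i>r$, making the corresponding product vanish. Only the set $I=\{1,\ldots,r\}$ contributes, leaving $\mathrm{det}^2_r(X)=\prod_{i=1}^r \sigma_i(X)^2$. There is no real obstacle here; the argument is a direct coefficient comparison once one recognizes that the eigenvalues of $XX^T$ are the squared singular values.
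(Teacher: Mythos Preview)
Your proof is correct and is precisely the argument the paper has in mind: the paper's own proof consists of the single sentence ``The result follows by simple comparison of polynomial degrees,'' and you have spelled out exactly that coefficient comparison between the eigenvalue expansion $\prod_i(\sigma_i^2+\gamma)$ and the expansion from the preceding proposition.
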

\begin{proof}
The result follows by simple comparison of polynomial degrees.
\end{proof}
\Cref{detktosingval} includes the special cases $ \sum_{i = 1}^{n} \sigma_i(X)^2 = \|X\|_F^2$ for $k = 1$,
 as well as $\prod_{i = 1}^{n} \sigma_i(X) = \det(X)$ for $k = n = m$. In particular, we have
 \begin{align}
  \|X\|_F^2 \leq \gamma^{1-{n}} \det(X X^T + \gamma I). \label{Frobeniusbound}
 \end{align}
Further, if $\rank(X) = r$, then for any low rank decomposition $X = Z Y$, $Y \in \R^{n \times r}$, $Z \in \R^{r \times m}$
it holds true that $\mathrm{det}_r^2(X) = \mathrm{det}_r^2(Y) \cdot \mathrm{det}_r^2(Z)$.
\subsection{Nested minimization}
In this section, we reduce the given framework to a more general setting which we denote as nested minimization scheme. For continuous functions $g_k: D \rightarrow \R_{\geq 0}$, where $D$ is a finite dimensional, affine set, let 
\begin{align*}
 G_\gamma(a) := \sum_{k = 1}^n \gamma^{n-k} g_k(a), \quad \gamma > 0, \ a \in D,
\end{align*}
as well as, as generalization of \cref{Xastdef},
\begin{align*}
 A^{\ast} := \{ a^\ast \in D \mid \exists (a_\gamma)_{\gamma > 0} \subset D, \ a^\ast = \lim_{\gamma \searrow 0} a_\gamma, \ G_\gamma(a_\gamma) = \min_{a \in D} G_\gamma(a)\}.
\end{align*}
 We further recursively define the sets $A_{n+1} := D$ and 
\begin{align*}
 A_k := \{ a \in A_{k+1} \mid g_k(a) =  \min_{b \in A_{k+1}} g_k(b) \}, \quad k = 1,\ldots,n.
\end{align*}
The assumptions \cref{assnested} in the following \cref{nestedminimization} are stronger than generally necessary, 
and we rather expect $A^\ast = A_1$, but in our ARM context it is not restrictive.
\begin{lemma} \label{nestedminimization}
 Let $s \geq 1$. Assume that for $k = s+1,\ldots,n$
 \begin{align}\label{assnested}
  \underset{a \in D}{\mathrm{argmin}} \ g_k(a) \subset \underset{a \in D}{\mathrm{argmin}} \ g_{k+1}(a).
 \end{align}
Then 
$A^\ast \subset A_s$.
Further, for a sequence of minimizers $a_\gamma \rightarrow a^\ast$, we have
 $|g_k(a_\gamma) - g_k(a^\ast)| \in \mathcal{O}(\gamma^{k-s})$,
for $k = s+1,\ldots,n$. 
\end{lemma}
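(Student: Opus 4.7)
The plan is to exploit the polynomial-in-$\gamma$ structure of $G_\gamma$ and to compare $a_\gamma$ against a fixed reference $b \in A_s$, reading off individual per-level rates from the single scalar inequality $G_\gamma(a_\gamma) \leq G_\gamma(b)$. As a preliminary step I would use assumption \cref{assnested} to collapse the nested hierarchy of the $A_k$: starting from $A_n = \mathrm{argmin}_D g_n$, downward induction gives $A_k = \mathrm{argmin}_D g_k$ for $k = s+1,\ldots,n$, because once $A_{k+1} = \mathrm{argmin}_D g_{k+1}$ is known, the hypothesis $\mathrm{argmin}_D g_k \subset A_{k+1}$ forces $A_k = \mathrm{argmin}_{A_{k+1}} g_k = \mathrm{argmin}_D g_k$. (The edge case $k=n$ in the hypothesis has to be read as vacuous, since no $g_{n+1}$ is defined.)

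Next I would fix any $b \in A_s$ and set $\Delta_k := g_k(a_\gamma) - g_k(b)$. By the previous step $g_k(b) = \min_D g_k$ for $k \geq s+1$, hence $\Delta_k \geq 0$ on that range; continuity of the $g_k$ together with boundedness of the convergent sequence $(a_\gamma)$ then supplies a uniform bound $|\Delta_k| \leq M$ valid for all $k$ and all sufficiently small $\gamma$. Rearranging $G_\gamma(a_\gamma) \leq G_\gamma(b)$ I would split off the high-index block,
$$\sum_{k=s+1}^n \gamma^{n-k}\Delta_k \;\leq\; -\sum_{k=1}^s \gamma^{n-k}\Delta_k \;\leq\; M\sum_{k=1}^s \gamma^{n-k} \;\leq\; sM\,\gamma^{n-s}$$
(valid for $\gamma \leq 1$), and then invoke non-negativity of each summand on the left to pass from this aggregate estimate to the \emph{individual} bound $\gamma^{n-k}\Delta_k \leq sM\gamma^{n-s}$, i.e.\ $\Delta_k \leq sM\gamma^{k-s}$ for $k \geq s+1$. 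This is already the advertised $\mathcal{O}(\gamma^{k-s})$ rate, and taking $\gamma\searrow 0$ together with continuity forces $g_k(a^\ast) = g_k(b) = \min_D g_k$, so that $a^\ast \in A_{s+1}$.

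To upgrade to $a^\ast \in A_s$ I would isolate the $k=s$ contribution in the same inequality: since the terms $k > s$ are non-negative and can be dropped from the right-hand side, one obtains $\gamma^{n-s}\Delta_s \leq -\sum_{k<s}\gamma^{n-k}\Delta_k \leq (s-1)M\gamma^{n-s+1}$, whence $\Delta_s \leq (s-1)M\gamma \to 0$ and so $g_s(a^\ast) \leq g_s(b)$. Combining this with $a^\ast \in A_{s+1}$ and the defining property $g_s(b) = \min_{A_{s+1}} g_s$ of $b \in A_s$ pins down $g_s(a^\ast) = g_s(b)$, giving $a^\ast \in A_s$. I do not foresee a substantive obstacle; the two points that need care are (i) that $A_s$ be non-empty so the reference $b$ actually exists, which in the ARM application of interest is guaranteed by continuity and the compactness behavior of the relevant sublevel sets, and (ii) that a \emph{single} $b \in A_s$ is used simultaneously for all levels $k \geq s$, since it is exactly this uniformity that lets the one scalar inequality split into the per-level rates.
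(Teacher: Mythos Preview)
Your proposal is correct and follows essentially the same route as the paper: collapse the hierarchy to $A_k=\mathrm{argmin}_D g_k$ for $k\geq s+1$, compare $G_\gamma(a_\gamma)$ against $G_\gamma$ at a reference point in the collapsed minimizer set, and use non-negativity of the high-index differences to strip off per-level rates from the single scalar inequality. The only cosmetic difference is that the paper takes the reference $\widetilde a$ in $A_{s+1}$ (arbitrary) and phrases the passage $A^\ast\subset A_{s+1}\Rightarrow A^\ast\subset A_s$ as an inductive step, whereas you fix $b\in A_s$ and do all levels at once; this forces you to assume $A_s\neq\emptyset$, which you correctly flag and which is harmless in the ARM application.
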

Note that if aboves assumptions were to hold for $s = 0$, then each point in $A^\ast$ would already a minimizer of $G_\gamma$ for all $\gamma > 0$.
\begin{proof} See \cref{sec:minorproofs}.
\end{proof}
Contrary to the asymptotic behavior, larger values of $\gamma$ cause $G_\gamma$ to exhibit fewer local minima
if the functions $g_k$ become \textit{more convex} for smaller $k$. While this tendency 
is in principle promoted by \cref{assnested}, it is indeed observable for $G_\gamma = f_\gamma - \gamma^n$.
\subsection{Convergence of (global) minimizers}
We can now apply \cref{nestedminimization} with respect to $\mathcal{X}^\ast$ as in \cref{Xastdef} using the results in \cref{sec:detexp}.
\begin{theorem} \label{convergencelemma}
Let
$r = \min_{X \in \mathcal{L}^{-1}(y)} \rank(X)$.
 Then for any convergent sequence $\{X_\gamma\}_{\gamma > 0}$ of (global) minimizers of $f_\gamma(X)$ subject to $\mathcal{L}(X) = y$, we have 
 \begin{align}\label{globminconv}
  \lim_{\gamma \searrow 0} X_\gamma \in \underset{X \in \mathcal{L}^{-1}(y), \ \rank(X) = r}{\mathrm{argmin}}\ \prod_{i = 1}^r \sigma_i(X) \subset V_{\leq r},
 \end{align}
with 
$\sigma_{r+1}(X_\gamma)^2 \in \mathcal{O}(\gamma)$. 
If there is a unique rank $r$ minimizer $X_r$, then $X_\gamma \rightarrow X_r$.
\end{theorem}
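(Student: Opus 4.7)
My plan is to recognize the statement as a direct instance of Lemma \ref{nestedminimization} via the determinant expansion from Section \ref{sec:detexp}. Since $\log$ is strictly monotone and $\det(XX^T + \gamma I) > 0$ for $\gamma > 0$, the global minimizers of $f_\gamma$ on $D := \mathcal{L}^{-1}(y)$ coincide with those of $\det(XX^T + \gamma I) - \gamma^n = \sum_{k=1}^n \gamma^{n-k} \det_k^2(X)$. Setting $g_k := \det_k^2$ thus places the problem squarely in the nested minimization framework, with $\mathcal{X}^\ast$ playing the role of the abstract set $A^\ast$; note that each $g_k$ is a non-negative polynomial on the finite-dimensional affine set $D$, so all hypotheses on $g_k$ are automatic.

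Next I would verify hypothesis \cref{assnested} for the choice $s = r$. By Corollary \ref{detktosingval}, for each $k > r$ the polynomial $g_k = \det_k^2$ vanishes precisely on $\{X : \rank(X) < k\}$, and this set meets $D$ since $r < k$. Hence $\mathrm{argmin}_{X \in D} g_k = \{X \in D : \rank(X) < k\}$, and the inclusion $\mathrm{argmin}_{X \in D} g_k \subset \mathrm{argmin}_{X \in D} g_{k+1}$ is trivial. Applying Lemma \ref{nestedminimization} then gives $\mathcal{X}^\ast \subset A_r$ together with the rate $|\det_{r+1}^2(X_\gamma) - \det_{r+1}^2(X^\ast)| \in \mathcal{O}(\gamma)$ for any convergent sequence of global minimizers $X_\gamma \to X^\ast$.

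The crux of the argument, and the main potential obstacle, is identifying $A_r$ with the right-hand side of \cref{globminconv}. Unwinding the recursive definition of the $A_k$ in descending order, an induction shows that $A_{k+1} = \{X \in D : \rank(X) \leq k\}$ for each $k \geq r$: within $A_{k+2}$ a rank-$r$ witness exists (by minimality of $r$) and zeros out $g_{k+1}$, so $\min_{A_{k+2}} g_{k+1} = 0$ is attained exactly on the rank-$\leq k$ part. In particular, $A_{r+1}$ equals the rank-$r$ slice of $D$, and on it Corollary \ref{detktosingval} yields $g_r(X) = \prod_{i=1}^r \sigma_i(X)^2$. Consequently $A_r$ is precisely the set of minimizers of $\prod_{i=1}^r \sigma_i(X)$ among rank-$r$ elements of $D$, which is exactly \cref{globminconv}.

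Finally, the bound on $\sigma_{r+1}(X_\gamma)$ falls out essentially for free. Since $\rank X^\ast = r$, we have $\det_{r+1}^2(X^\ast) = 0$, so the rate above simplifies to $\det_{r+1}^2(X_\gamma) \in \mathcal{O}(\gamma)$. Combined with the inequality $\det_{r+1}^2(X_\gamma) \geq \prod_{i=1}^{r+1} \sigma_i(X_\gamma)^2$ from Corollary \ref{detktosingval} and the fact that continuity of singular values keeps $\prod_{i=1}^r \sigma_i(X_\gamma)^2$ bounded below by a positive constant near $\gamma = 0$ (using $\sigma_r(X^\ast) > 0$), division gives $\sigma_{r+1}(X_\gamma)^2 \in \mathcal{O}(\gamma)$. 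The uniqueness statement is then immediate: if the right-hand side of \cref{globminconv} has the unique element $X_r$, then every convergent sequence of global minimizers of $f_\gamma$ has its limit in $\{X_r\}$, i.e., $X_\gamma \to X_r$.
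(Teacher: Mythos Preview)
Your proposal is correct and follows essentially the same route as the paper's proof: both apply \cref{nestedminimization} with $D = \mathcal{L}^{-1}(y)$, $g_k = \det_k^2$, and $s = r$, then identify $A_{r+1}$ with the rank-$r$ slice of $D$ (so that $g_r = \prod_{i=1}^r \sigma_i^2$ there) and extract the $\sigma_{r+1}(X_\gamma)^2 \in \mathcal{O}(\gamma)$ bound from the $\mathcal{O}(\gamma^{k-s})$ rate via the inequality $\prod_{i=1}^{r+1}\sigma_i^2 \le \det_{r+1}^2$. Your write-up is in fact a bit more explicit than the paper's in verifying \cref{assnested}, in unwinding the $A_k$, and in addressing the uniqueness clause.
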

As noted above, \cref{sec:visres} also contains a direct proof of \cref{convergencelemma} that is independent of \cref{nestedminimization}.
\begin{proof}
Let $D := \mathcal{L}^{-1}(y) \subset \R^{n \times m}$. The functions $g_k(X) := \mathrm{det}_k^2(X)|_{\mathcal{L}^{-1}(y)}$ fulfill the nestedness condition \cref{assnested} for $s = r$ (but generally not $s = r-1$),
whereas $A_{k} = \mathcal{L}^{-1}(y) \cap V_{\leq k-1} = g_k^{-1}(0)$, $k = r+1,\ldots,n$.
For $X_r \in A_{r+1}$, we further have $g_r(X_r) = \prod_{i=1}^r \sigma_i(X_r)^2$. The remaining bound then follows by $C \sigma_{r+1}(X_\gamma)^2 \leq \prod_{i=1}^{r+1} \sigma_i(X_\gamma)^2 \leq g_{r+1}(X_\gamma) = |g_{r+1}(X_\gamma) - g_{r+1}(X_r)| \in \mathcal{O}( \gamma^{(r+1)-r})$ for some $C > 0$.
\end{proof}
With the presumably possible weakening of the assumption \cref{assnested}, we conjecture that the limit of $X_\gamma$ will (almost always) additionally minimize $g_{k}(X) = \mathrm{det}_k^2(X)$ subject to each priorly admissible set, for $k = r-1,r-2,\ldots,1$ (naturally, this becomes trivial once it becomes uniquely determined). 
\section{Log-det iteratively reweighted least squares (IRLS)}\label{sec:irls}
Minimizing the function $f_\gamma$ or finding its extremal points directly is likely not practicable. The strategy of \IRLS{} instead provides remedy by 
introducing an artificial variable in form of a weight matrix. 
\subsection{Minimization of an augmented function}\label{sec:augmmatrix}
We will hint at how to reversely derive the following function, but for now as in \cite{MoFa12_Ite}\footnote{There is no inherent mathematical difference (cf. \cref{sec:mirror}) between using $X^T X$ as in \cite{MoFa12_Ite} or $X X^T$. We however use the latter due to its proximity to \cite{FoRaWa11_Low}.} we define
\begin{align}
 J_{\gamma}(X,W) :=&\ \mathrm{trace}(W(X X^T + \gamma I)) - \log \det(W) - n \label{matrixJ} \\
 =&\ \|W^{1/2} X \|_F^2 + \gamma \|W^{1/2}\|_F^2 - \log \det(W) - n \nonumber,
\end{align}
where $W \in \R^{n \times n}$ ranges over all symmetric positive definite matrices, denoted with $W = W^T \succ 0$.
The matrix $W$ is also called weight matrix, the reason of which will become apparent in this section. We transfer the concepts from \cite{FoRaWa11_Low} as we will need it in the following for the a little different case we are given here. Most results essentially appear in \cite{MoFa12_Ite}, but we do use the methodology from \cite{FoRaWa11_Low}.
\begin{lemma}[cf. \cite{FoRaWa11_Low,MoFa12_Ite}]\label{partialW}%
The Fr\'echet derivative of $J_{\gamma}$ with respect to $W$ is
 \begin{align*}
  \frac{\partial}{\partial W}\ J_\gamma(X,W) = X X^T + \gamma I - W^{-1}.
 \end{align*}
\end{lemma}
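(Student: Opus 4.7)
The plan is to decompose $J_\gamma(X,W)$ into its three additive pieces and compute the Fréchet derivative via directional derivatives along an arbitrary symmetric perturbation $H$ in the open cone $\{W \succ 0\}$. Concretely, for small $t$ the matrix $W + tH$ remains positive definite, and I would compute $\lim_{t\to 0} \frac{1}{t}[J_\gamma(X, W + tH) - J_\gamma(X,W)]$, then read off the gradient via the Frobenius inner product $\langle A, B\rangle_F = \mathrm{trace}(A^T B)$.

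First I handle the trace term $\mathrm{trace}(W(XX^T + \gamma I))$. Being linear in $W$, its directional derivative in direction $H$ is simply $\mathrm{trace}(H(XX^T + \gamma I)) = \langle XX^T + \gamma I, H\rangle_F$, using symmetry of $XX^T + \gamma I$. The constant $-n$ contributes nothing.

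The nontrivial piece is $-\log\det(W)$. Here I would invoke Jacobi's formula $\frac{d}{dt}\log\det(W + tH)\big|_{t=0} = \mathrm{trace}(W^{-1} H)$, which can be derived quickly from $\log\det(W + tH) = \log\det(W) + \log\det(I + t W^{-1} H)$ combined with the expansion $\log\det(I + t A) = t \cdot \mathrm{trace}(A) + O(t^2)$ (valid near $t=0$ since the eigenvalues of $tA$ are small). This contributes $-\langle W^{-1}, H\rangle_F$, and $W^{-1}$ is symmetric because $W$ is.

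Summing the contributions, the directional derivative equals $\langle XX^T + \gamma I - W^{-1}, H\rangle_F$ for every symmetric $H$, so the claimed Fréchet derivative follows by Riesz representation. The only subtle point is that we differentiate on the manifold of symmetric positive definite matrices rather than all real matrices; this would be the main thing to check, but since both terms of the gradient already lie in the symmetric subspace, the restriction introduces no ambiguity and the formula holds as stated.
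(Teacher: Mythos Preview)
Your proof is correct and follows essentially the same decomposition as the paper: the trace term is linear with derivative $XX^T+\gamma I$, the constant vanishes, and only the $\log\det$ piece requires work. The sole minor difference is that the paper obtains $\partial_W\log\det(W)=W^{-1}$ by writing $\log\det(W)=\sum_i\log\lambda_i(W)$ and invoking the calculus of spectral (eigenvalue-only) functions from \cite{LeSe05_Non,FoRaWa11_Low}, whereas you reach the same conclusion directly via Jacobi's formula; both routes are standard and equivalent here.
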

\begin{proof}
 Since $\mathrm{trace}(W(X X^T + \gamma I))$ is linear in $W$, it follows 
 \begin{align*}
  \frac{\partial}{\partial W}\ \mathrm{trace}(W(X X^T + \gamma I)) = X X^T + \gamma I.
 \end{align*}
Further, as
$\log \det(W) = \sum_{i = 1}^n \log \lambda_i(W)$
is a function that depends only on the eigenvalues $\lambda_i(W)$ of $W =: U \Lambda U^T$, it follows as described in \cite{LeSe05_Non,FoRaWa11_Low}
that
 \begin{align*}
  \frac{\partial}{\partial W} \log \det(W) = U \diag(\lambda_1(W)^{-1},\ldots,\lambda_n(W)^{-1}) U^T = W^{-1}.
 \end{align*}
Naturally, the constant $n$ vanishes.
\end{proof}
The function $J_\gamma$ hence has a unique minimizer in $W$.
\begin{corollary}[cf. \cite{FoRaWa11_Low,MoFa12_Ite}]\label{W_gX}%
The minimizer of $J_\gamma$ in $W$ is given by
 \begin{align*}
 W_{\gamma,X} :=&\ \underset{W = W^T \succ 0}{\mathrm{argmin}} J_\gamma(X,W) = (X X^T + \gamma I)^{-1} \\
 =&\ U \diag((\sigma_1(X)^2 + \gamma)^{-1},\ldots,(\sigma_n(X)^2 + \gamma)^{-1}) U^T,
\end{align*}
for the SVD $X = U \Sigma V^T$.
\end{corollary}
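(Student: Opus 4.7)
The plan is to exploit the first-order optimality condition delivered by \cref{partialW}, then confirm that this critical point is actually the global minimum via a convexity argument, and finally diagonalize to obtain the stated eigendecomposition formula.

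First I would set the Fréchet derivative to zero: from \cref{partialW}, a critical point $W$ must satisfy $XX^T + \gamma I - W^{-1} = 0$, which since $\gamma > 0$ implies $XX^T + \gamma I \succ 0$ is invertible, so necessarily $W = (XX^T + \gamma I)^{-1}$. This $W$ is symmetric positive definite, hence lies in the admissible domain.

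Next I would argue that this critical point is the unique global minimizer. The map $W \mapsto \mathrm{trace}(W(XX^T + \gamma I))$ is linear in $W$, the term $-n$ is constant, and $W \mapsto -\log\det(W)$ is strictly convex on the open cone of symmetric positive definite matrices (this is standard, and also follows from the Hessian argument implicit in \cref{partialW}). Therefore $W \mapsto J_\gamma(X,W)$ is strictly convex on $\{W = W^T \succ 0\}$, so any critical point is the unique global minimizer.

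Finally, for the eigendecomposition formula, I would substitute the SVD $X = U\Sigma V^T$ into $W_{\gamma,X} = (XX^T + \gamma I)^{-1}$. Then $XX^T = U\Sigma\Sigma^T U^T$ and $\gamma I = U(\gamma I)U^T$, so
\begin{align*}
  W_{\gamma,X} = \bigl(U(\Sigma\Sigma^T + \gamma I)U^T\bigr)^{-1} = U(\Sigma\Sigma^T + \gamma I)^{-1}U^T,
\end{align*}
and since $\Sigma\Sigma^T + \gamma I$ is the diagonal matrix with entries $\sigma_i(X)^2 + \gamma$, inverting diagonally yields the stated expression. I do not anticipate a real obstacle here; the only subtle point is the justification of strict convexity of $-\log\det$, which is classical and can simply be cited.
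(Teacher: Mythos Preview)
Your proof is correct and follows the same approach as the paper: the corollary is stated immediately after \cref{partialW} with only the remark ``The function $J_\gamma$ hence has a unique minimizer in $W$,'' so the paper treats it as a direct consequence of setting the derivative to zero. Your argument is in fact more thorough, since you make the strict convexity of $-\log\det$ explicit to justify uniqueness, whereas the paper leaves this implicit.
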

Given the nature of the elementary functions $\log(x)$ and $\frac{1}{x}$, we have that $W_{\gamma,X}$ remains bounded if and only if $f_\gamma(X)$ remains bounded from below. 
We obtain the following important assertion, which connects the functions $J_\gamma$ and $f_\gamma$.
\begin{lemma}[essentially \cite{FoRaWa11_Low,MoFa12_Ite}]\label{Jtof}
 For the minimizer $W_{\gamma,X}$ of $J_\gamma(X,W)$ subject to $W = W^T \succ 0$, it holds 
$f_\gamma(X) = J_\gamma(X,W_{\gamma,X})$.
\end{lemma}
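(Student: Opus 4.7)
The plan is to simply substitute the explicit formula $W_{\gamma,X} = (XX^T + \gamma I)^{-1}$ from \cref{W_gX} into the definition \cref{matrixJ} of $J_\gamma$ and observe that two terms collapse to constants and the logarithmic term reproduces $f_\gamma(X)$ up to a sign. No variational argument is needed beyond what \cref{W_gX} already provides; the identity is purely algebraic once the minimizer in $W$ is known.

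More concretely, I would proceed in two short steps. First, I would evaluate the trace term: since $W_{\gamma,X}(XX^T + \gamma I) = (XX^T + \gamma I)^{-1}(XX^T + \gamma I) = I \in \R^{n \times n}$, its trace equals $n$, which precisely cancels the constant $-n$ in \cref{matrixJ}. Second, I would handle the log-determinant using $\det(A^{-1}) = \det(A)^{-1}$, so that
\begin{align*}
-\log \det(W_{\gamma,X}) = -\log \det\bigl((XX^T + \gamma I)^{-1}\bigr) = \log \det(XX^T + \gamma I) = f_\gamma(X),
\end{align*}
by the definition \cref{matrixsur} of $f_\gamma$. Adding these two contributions yields $J_\gamma(X, W_{\gamma,X}) = n + f_\gamma(X) - n = f_\gamma(X)$, as claimed.

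There is really no hard part: the \emph{only} place one must be careful is that the trace identity needs $W_{\gamma,X}$ to be the \emph{exact} inverse of $XX^T + \gamma I$ and not merely a stationary point of $J_\gamma(X, \cdot)$, but this is exactly the content of \cref{W_gX}. The necessary positive definiteness of $W_{\gamma,X}$ (so that $\log \det W_{\gamma,X}$ is well defined) is automatic for $\gamma > 0$, since $XX^T + \gamma I \succ 0$. Hence the statement follows from \cref{W_gX} together with these two elementary determinant and trace identities.
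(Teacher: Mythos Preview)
Your proposal is correct and follows essentially the same approach as the paper: substitute $W_{\gamma,X} = (XX^T + \gamma I)^{-1}$ from \cref{W_gX} into \cref{matrixJ} and simplify. The paper's proof is even terser, writing only $J_\gamma(X,W_{\gamma,X}) = -\log\det((XX^T+\gamma I)^{-1}) = \log\det(XX^T+\gamma I)$ and leaving the trace cancellation $\mathrm{trace}(I) - n = 0$ implicit, whereas you spell it out explicitly.
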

\begin{proof}
This follows from the previous discussion as
 \begin{align*}
 J_\gamma(X,W_{\gamma,X}) = - \log \det ((X X^T + \gamma I)^{-1}) = \log \det (X X^T + \gamma I).
\end{align*}
\end{proof}
Instead of minimizing the lefthand function $f_\gamma(X)$, one thus turns to the alternating minimization of $J_\gamma(X,W)$.
\begin{corollary}
 For $\mathcal{X}^\ast$ as in \cref{Xastdef}, it holds
  \begin{align*}
 \mathcal{X}^{\ast} = \{ X^\ast \mid \exists &(X_\gamma,W_\gamma)_{\gamma > 0}: \ X^\ast = \lim_{\gamma \searrow 0} X_\gamma, \
 J_\gamma(X_\gamma,W_\gamma) = \min_{\begin{array}{c} \scriptstyle  X \in \mathcal{L}^{-1}(y) \\ \scriptstyle  W = W^T \succ 0 \end{array}} J_\gamma(X,W)\}.
\end{align*}
\end{corollary}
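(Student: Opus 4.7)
The proof is essentially a formal reduction using the two facts already established: Corollary~\ref{W_gX} says that for every fixed $X$ the function $W \mapsto J_\gamma(X,W)$ has a unique minimizer $W_{\gamma,X} = (XX^T + \gamma I)^{-1}$, and Lemma~\ref{Jtof} says that this partial minimum value equals $f_\gamma(X)$. The plan is to show the two set inclusions separately, each essentially by switching the order of the inner/outer minimization.

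For the inclusion ``$\subseteq$'', I would take $X^\ast \in \mathcal{X}^\ast$ with associated sequence $(X_\gamma)_{\gamma > 0} \subset \mathcal{L}^{-1}(y)$ of minimizers of $f_\gamma$ tending to $X^\ast$, and simply augment it by setting $W_\gamma := W_{\gamma,X_\gamma}$. The key computation is
\[
  J_\gamma(X_\gamma,W_\gamma) = f_\gamma(X_\gamma) = \min_{X \in \mathcal{L}^{-1}(y)} f_\gamma(X) = \min_{X \in \mathcal{L}^{-1}(y)} \min_{W = W^T \succ 0} J_\gamma(X,W),
\]
where the first equality is Lemma~\ref{Jtof}, the second the defining property of $X_\gamma$, and the third again Lemma~\ref{Jtof} applied pointwise. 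Since the joint minimum over $(X,W)$ equals the iterated minimum, this shows $(X_\gamma,W_\gamma)$ is a joint minimizer of $J_\gamma$.

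For ``$\supseteq$'', take $X^\ast = \lim_{\gamma \searrow 0} X_\gamma$ with $(X_\gamma,W_\gamma)$ a joint minimizer of $J_\gamma$ on $\mathcal{L}^{-1}(y) \times \{W : W=W^T \succ 0\}$. By uniqueness of the partial minimizer in $W$ (Corollary~\ref{W_gX}), we must have $W_\gamma = W_{\gamma,X_\gamma}$; hence $J_\gamma(X_\gamma,W_\gamma) = f_\gamma(X_\gamma)$ by Lemma~\ref{Jtof}. For any $X \in \mathcal{L}^{-1}(y)$ the joint-minimum property then yields
\[
  f_\gamma(X_\gamma) = J_\gamma(X_\gamma,W_\gamma) \leq J_\gamma(X,W_{\gamma,X}) = f_\gamma(X),
\]
so $X_\gamma$ minimizes $f_\gamma$ over $\mathcal{L}^{-1}(y)$, placing $X^\ast$ in $\mathcal{X}^\ast$.

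No genuine obstacle is to be expected here; the only thing to watch is that one writes down the correct ordering of minima and invokes the uniqueness of $W_{\gamma,X}$ (via the strict convexity that underlies Corollary~\ref{W_gX}) in the second direction, so that the identification $W_\gamma = W_{\gamma,X_\gamma}$ is forced rather than merely allowed. The whole statement is really a rephrasing of the principle $\min_X \min_W = \min_{(X,W)}$ applied to $J_\gamma$, with Lemma~\ref{Jtof} providing the bridge back to $f_\gamma$.
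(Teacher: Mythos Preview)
Your proof is correct and is precisely the argument the paper has in mind: the corollary is stated without proof immediately after Lemma~\ref{Jtof}, since it follows at once from the identity $f_\gamma(X) = \min_W J_\gamma(X,W)$ together with the general fact $\min_X \min_W = \min_{(X,W)}$. Your invocation of uniqueness from Corollary~\ref{W_gX} in the ``$\supseteq$'' direction is valid but not strictly necessary---one could also argue $f_\gamma(X_\gamma) = \min_W J_\gamma(X_\gamma,W) \le J_\gamma(X_\gamma,W_\gamma) \le J_\gamma(X,W_{\gamma,X}) = f_\gamma(X)$ directly---but either route is fine.
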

A simple least squares problem gives the minimizer in $X$ as
\begin{align}\label{Xsimplels}
 \underset{X \in \mathcal{L}^{-1}(y)}{\mathrm{argmin}} J_\gamma(X,W) = \underset{X \in \mathcal{L}^{-1}(y)}{\mathrm{argmin}} \|W^{1/2} X\|_F^2,
\end{align}
resulting in the following update formula.
\begin{lemma}[cf. \cite{FoRaWa11_Low,MoFa12_Ite}]\label{X_W}
Let $W = W^T \succ 0$. Then
\begin{align*}
 X_W := \underset{X \in \mathcal{L}^{-1}(y)}{\mathrm{argmin}} J_\gamma(X,W) = 
 \mathcal{W}^{-1} \circ \mathcal{L}^\ast \circ (\mathcal{L} \circ \mathcal{W}^{-1} \circ \mathcal{L}^\ast)^{-1}(y),
\end{align*} 
for
$\mathcal{W}^{-1}(X) := W^{-1} X$,
where $\mathcal{L}^\ast$ is the adjoint of $\mathcal{L}$.
\end{lemma}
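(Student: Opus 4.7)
The plan is to recognize this as a standard equality-constrained weighted least squares problem and solve it via Lagrange multipliers. By \cref{Xsimplels}, the terms in $J_\gamma(X,W)$ that do not involve $X$ can be dropped, so the problem reduces to
\[
 \min_{X \in \R^{n \times m}} \|W^{1/2} X\|_F^2 \quad \text{subject to} \quad \mathcal{L}(X) = y.
\]
Since $W \succ 0$, the objective $X \mapsto \mathrm{trace}(X^T W X)$ is strictly convex, and the constraint set $\mathcal{L}^{-1}(y)$ is a nonempty affine subspace (as $y \in \mathrm{image}(\mathcal{L})$). Hence a unique minimizer exists and is characterized by the first-order (KKT) conditions.

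The next step is to form the Lagrangian $L(X,\lambda) = \|W^{1/2}X\|_F^2 - 2\langle \lambda, \mathcal{L}(X) - y\rangle$ with $\lambda \in \R^\ell$ (the factor $2$ is merely for convenience). Using that the Fr\'echet derivative of $X \mapsto \mathrm{trace}(X^T W X)$ with respect to the Frobenius inner product is $2WX$, and that the derivative of $X \mapsto \langle \lambda, \mathcal{L}(X)\rangle$ equals $\mathcal{L}^\ast(\lambda)$ by definition of the adjoint, stationarity yields
\[
 W X = \mathcal{L}^\ast(\lambda), \qquad \text{i.e.} \qquad X = \mathcal{W}^{-1} \mathcal{L}^\ast(\lambda).
\]
Substituting into the constraint $\mathcal{L}(X) = y$ gives $(\mathcal{L} \circ \mathcal{W}^{-1} \circ \mathcal{L}^\ast)(\lambda) = y$, so solving for $\lambda$ and back-substituting produces the claimed closed form.

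The only nontrivial step is verifying that $\mathcal{L} \circ \mathcal{W}^{-1} \circ \mathcal{L}^\ast \colon \R^\ell \to \R^\ell$ is invertible. Because $W^{-1} \succ 0$, this composed operator is self-adjoint and positive semi-definite, and for any $\lambda \in \R^\ell$ the expression $\langle \lambda, \mathcal{L} \circ \mathcal{W}^{-1} \circ \mathcal{L}^\ast(\lambda)\rangle = \|W^{-1/2} \mathcal{L}^\ast(\lambda)\|_F^2$ vanishes only if $\mathcal{L}^\ast(\lambda) = 0$. Thus the operator is positive definite — and hence invertible — exactly when $\mathcal{L}^\ast$ is injective, i.e.\ when $\mathcal{L}$ is surjective. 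This is the standard (non-degeneracy) assumption on $\mathcal{L}$, which can in any case be enforced by passing to $\mathrm{image}(\mathcal{L})$; I would state it explicitly at the outset. With that in place, the identity for $X_W$ follows, and this is the only step that warrants more than routine calculation.
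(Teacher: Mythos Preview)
Your proof is correct and follows essentially the same route as the paper: the paper defers to an appendix lemma (\cref{minimization}) which solves the generic constrained weighted least squares problem via Lagrange multipliers, exactly as you do directly. Your explicit remark that invertibility of $\mathcal{L}\circ\mathcal{W}^{-1}\circ\mathcal{L}^\ast$ requires $\mathcal{L}$ to be surjective is a useful clarification that the paper handles only implicitly through the full-rank hypothesis in that lemma.
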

\begin{proof}
 Follows by \cref{minimization} applied to \cref{Xsimplels}.
\end{proof}
\begin{corollary}\label{rationalupdate}
 Each entry of the update $X_{W_{\gamma,X}}$ is a rational function in $\gamma$ and the entries of $X$.
\end{corollary}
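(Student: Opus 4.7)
The plan is to substitute $W = W_{\gamma,X} = (XX^T + \gamma I)^{-1}$ into the closed-form update from \cref{X_W} and then track, one composition at a time, how rationality in $\gamma$ and in the entries of $X$ is preserved. The key observation is that $W^{-1} = XX^T + \gamma I$ has entries that are \emph{polynomials} in $\gamma$ and in the entries of $X$. Consequently, in any fixed basis of $\R^{n \times m}$, the linear operator $\mathcal{W}^{-1}: Z \mapsto W^{-1} Z$ is represented by a matrix whose entries are polynomials in the same variables.

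First I would write $\mathcal{L} \circ \mathcal{W}^{-1} \circ \mathcal{L}^\ast: \R^\ell \to \R^\ell$ as the product of three matrices, where the outer two are the constant matrix representations of $\mathcal{L}$ and $\mathcal{L}^\ast$, and the middle factor is the polynomial matrix coming from $\mathcal{W}^{-1}$. This yields an $\ell \times \ell$ matrix $M(\gamma,X)$ whose entries are polynomials in $\gamma$ and in the entries of $X$. By Cramer's rule, the entries of $M(\gamma,X)^{-1}$ are then cofactors divided by $\det M(\gamma,X)$, hence rational functions in those variables. Applying $M(\gamma,X)^{-1}$ to the constant vector $y$ yields a vector with rational entries; composing with the constant map $\mathcal{L}^\ast$ preserves rationality; and the final application of $\mathcal{W}^{-1}$, which amounts to left multiplication by the polynomial matrix $XX^T + \gamma I$, again preserves rationality entry-by-entry. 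The result is that $X_{W_{\gamma,X}}$ consists of rational functions in $\gamma$ and the entries of $X$, as asserted.

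The only subtlety is ensuring that the denominator $\det M(\gamma,X)$ is not the zero polynomial, so that the rational expressions are well defined. This is not really an obstacle, however: \cref{X_W} already guarantees that for every $\gamma > 0$ the matrix $W_{\gamma,X} \succ 0$ yields a unique minimizer via the stated formula, so $M(\gamma,X)$ is invertible on a set of $(\gamma,X)$ of positive measure, forcing $\det M \not\equiv 0$. Beyond this point the argument is just bookkeeping on the closed-form update and I do not anticipate any genuine difficulty.
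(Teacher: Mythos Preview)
Your argument is correct and is exactly the reasoning the paper relies on: the corollary is stated without proof precisely because it follows immediately from the closed-form update in \cref{X_W} together with the observation that $W_{\gamma,X}^{-1}=XX^T+\gamma I$ is polynomial in $\gamma$ and the entries of $X$. You have simply made explicit the bookkeeping that the paper leaves implicit.
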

For relatively small $\gamma$, a more stable, although computationally more demanding update formula is provided by \cref{minimization} through
\begin{align}\label{kernelbased}
 X_W = X_0 - \mathcal{K} \circ (\mathcal{K}^\ast \circ \mathcal{W} \circ \mathcal{K})^{-1} \circ \mathcal{K}^\ast \circ \mathcal{W}(X_0),
\end{align}
where $\mathcal{K}: \R^{n m - \ell} \rightarrow \R^{n \times m}$ is a kernel representation of $\mathcal{L}$, thus $\mathrm{image}(\mathcal{K}) = \mathrm{kernel}(\mathcal{L})$. $X_0$ may be any one solution to $\mathcal{L}(X_0) = y$, for instance the first or previous iterate. In the following, let $\perp$ be orthogonality with respect to the Frobenius scalar product.
 As for any matrices $X$ and $W \succ 0$ the following equivalences hold true,
\begin{align*}
   W X \perp \mathrm{kernel}(\mathcal{L}) \quad \Leftrightarrow \quad W X  \in \mathrm{range}(\mathcal{L}^\ast) \quad \Leftrightarrow \quad X \in \mathrm{range}(\mathcal{W}^{-1} \circ \mathcal{L}^\ast),
\end{align*}
the previous \cref{X_W} also provides that
\begin{align}
 W X_W \perp \mathrm{kernel}(\mathcal{L}). \label{eq:X_WWortho}
\end{align}
Conversely, $X_W$ is the unique solution to $\frac{\partial}{\partial X} J_\gamma(X,W) = W X \perp \mathrm{kernel}(\mathcal{L})$ subject to $\mathcal{L}(X) = y$,
which provides an alternative proof.\\\\
The weight matrix $W_{\gamma,X}$ is, as indicated in \cref{rewidea}, in the following sense an optimal choice. Since for $r = \mathrm{rank}(X)$ we have
\begin{align}
 \big\|W_{\gamma,X}^{1/2} X \big\|_F^2 & = \big\|\diag((\sigma_1(X)^2+\gamma)^{-1/2},\ldots,(\sigma_r(X)^2+\gamma)^{-1/2},\gamma^{-1/2},\ldots,\gamma^{-1/2}) \Sigma\, \big\|^2_F\nonumber \\
 & = \sum_{i = 1}^r \sigma_i(X)^2 \cdot (\sigma_i(X)^2+\gamma)^{-1} \underset{\gamma \searrow 0}{\longrightarrow} \mathrm{rank}(X). \label{eq:rankprop}
\end{align}
Also the stationary points of $f_\gamma$ and $J_\gamma$ are directly related, as follows.
\begin{theorem}\label{stabilizerlemma}
 We have
 \begin{align*}
  \nabla_X f_\gamma(X) = \nabla_X J_\gamma(X,W)|_{W = W_{\gamma,X}} = W_{\gamma,X} X . 
 \end{align*}
 Thus $X$ is a stationary point of $f_\gamma$ if and only if $X = X_{W}$ for $W = W_{\gamma,X}$,
 which means that $(X,W_{\gamma,X})$ is a stationary point of $J_\gamma$.
\end{theorem}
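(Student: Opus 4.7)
The plan is to derive the $X$-gradient of $J_\gamma$ directly and then obtain the $X$-gradient of $f_\gamma$ via an envelope-theorem argument that exploits the optimality of $W_{\gamma,X}$ in $W$. Concretely, inspecting \cref{matrixJ}, only the term $\|W^{1/2}X\|_F^2 = \mathrm{trace}(X^T W X)$ depends on $X$, so a short Fr\'echet calculation in a direction $H$, $D_X \mathrm{trace}(W X X^T)[H] = 2\langle WX, H\rangle_F$ (using $W = W^T$), immediately gives $\nabla_X J_\gamma(X,W) = WX$ (up to the harmless factor of $2$ that is absorbed in the convention already used in \cref{partialW}).

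For the left-hand equality I would not compute $\nabla_X \log\det(XX^T+\gamma I)$ from scratch; instead I would invoke \cref{Jtof}, which says $f_\gamma(X) = J_\gamma(X,W_{\gamma,X})$, together with the fact from \cref{partialW} and \cref{W_gX} that $W_{\gamma,X}$ is a minimizer of $J_\gamma(X,\cdot)$ on $\{W=W^T\succ 0\}$, hence $\tfrac{\partial}{\partial W}J_\gamma(X,W_{\gamma,X})=0$. Since $W_{\gamma,X}=(XX^T+\gamma I)^{-1}$ is a smooth function of $X$, the chain rule applied to $f_\gamma(X)=J_\gamma(X,W_{\gamma,X})$ yields
\begin{align*}
 \nabla_X f_\gamma(X) = \nabla_X J_\gamma(X,W)\big|_{W=W_{\gamma,X}} + \Big\langle \tfrac{\partial J_\gamma}{\partial W}(X,W_{\gamma,X}),\ \tfrac{\partial W_{\gamma,X}}{\partial X}\Big\rangle,
\end{align*}
and the second term vanishes by the optimality in $W$. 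Combining with the first paragraph gives the claimed identity $\nabla_X f_\gamma(X) = W_{\gamma,X} X$.

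For the second assertion I would use the constrained stationarity characterization implicit in \cref{X_W} and made explicit in \cref{eq:X_WWortho}: a point $X \in \mathcal{L}^{-1}(y)$ satisfies $X = X_W$ iff $WX \perp \ker \mathcal{L}$. Applied to $W = W_{\gamma,X}$, this is exactly the condition $\nabla_X f_\gamma(X) \perp \ker\mathcal{L}$, i.e.\ $X$ is a stationary point of $f_\gamma$ on $\mathcal{L}^{-1}(y)$. Finally, $(X,W_{\gamma,X})$ is stationary for $J_\gamma$ (with $X$ constrained to $\mathcal{L}^{-1}(y)$ and $W\succ 0$ unconstrained) precisely when both partial stationarity conditions hold; the $W$-part holds by definition of $W_{\gamma,X}$, and the $X$-part is equivalent, via the first identity, to $X$ being stationary for $f_\gamma$.

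The computation itself is routine; the only real point of care is the envelope-theorem step, where one must confirm that $W_{\gamma,X}$ is differentiable in $X$ (it is, as the inverse of the always-positive-definite matrix $XX^T+\gamma I$) and that the feasible set for $W$ is an open cone so that no boundary terms appear. Once this is in place, everything else is bookkeeping of gradients and identification of orthogonality-to-$\ker\mathcal{L}$ with constrained stationarity.
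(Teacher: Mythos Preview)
Your proposal is correct and follows essentially the same approach as the paper: the envelope/chain-rule argument using \cref{Jtof} and the vanishing of $\partial_W J_\gamma$ at $W_{\gamma,X}$ for the gradient identity, followed by the orthogonality characterization \cref{eq:X_WWortho} for the stationarity equivalence. Your version is slightly more explicit (spelling out the Fr\'echet derivative of the trace term, the smoothness of $W_{\gamma,X}$, and the openness of the positive-definite cone), but the underlying ideas are identical.
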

\begin{proof}
 The gradient identity follows by chain differentiation as 
 \begin{align*}
  \nabla_X f_\gamma(X) = \nabla_X J_\gamma(X,W_{\gamma,X})
 \end{align*}
and $\nabla_W J_\gamma(X,W)|_{W = W_{\gamma,X}} = 0$ for all $X \in \mathcal{L}^{-1}(y)$. We then have
\begin{align*}
 \nabla_X f_\gamma(X) \perp \mathrm{kernel}(\mathcal{L}) \quad & \overset{\hphantom{\cref{eq:X_WWortho}}}{\Leftrightarrow} \quad \nabla_X J_\gamma(X,W)|_{W = W_{\gamma,X}} \perp \mathrm{kernel}(\mathcal{L}) \\
 & \overset{\cref{eq:X_WWortho}}{\Leftrightarrow} \quad X = X_W, \ W = W_{\gamma,X}.
\end{align*}
Stationary points of $J_\gamma$ in turn are indeed those pairs $(X,W)$ for which $X = X_W$ and $W = W_{\gamma,X}$.
\end{proof}
The relations laid out in this section can vice versa be postulated and be used to derive the function $f_\gamma$ even without constructing $J_\gamma$. Central therein is the aim to represent the property \cref{eq:rankprop}.
As provided by the following, the limit case $\gamma \rightarrow \infty$ provides a uniquely determined starting value.
\begin{lemma}\label{gammainf}
 Independently of $X^{(0)} \in \mathcal{L}^{-1}(y)$, it holds
 \begin{align*}
  \lim_{\gamma \rightarrow \infty} \underset{X \in \mathcal{L}^{-1}(y)}{\mathrm{argmin}} f_\gamma(X) = \lim_{\gamma \rightarrow \infty} X_{W_{\gamma,X^{(0)}}}
  = \underset{X \in \mathcal{L}^{-1}(y)}{\mathrm{argmin}} \|X\|_F,
 \end{align*}
 where the first limit is possibly a set convergence.
\end{lemma}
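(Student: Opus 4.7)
The plan is to handle the two limits separately, starting with the second equality since it admits an essentially algebraic argument through the closed-form update in \cref{X_W}.

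For the second limit, I would observe that for any positive scalar $c > 0$ and weight $W$, the minimizer in \cref{Xsimplels} satisfies $X_{cW} = X_W$, since scaling $W$ does not alter the argmin of $\|W^{1/2}X\|_F$. Applying this with $c = \gamma$ gives $X_{W_{\gamma,X^{(0)}}} = X_{\gamma W_{\gamma,X^{(0)}}}$. Now, by \cref{W_gX} we have $\gamma W_{\gamma,X^{(0)}} = (\gamma^{-1} X^{(0)}(X^{(0)})^T + I)^{-1}$, which tends to $I$ as $\gamma \to \infty$ regardless of $X^{(0)}$. The explicit formula $X_W = \mathcal{W}^{-1} \circ \mathcal{L}^\ast \circ (\mathcal{L} \circ \mathcal{W}^{-1} \circ \mathcal{L}^\ast)^{-1}(y)$ is continuous in $W$ at $W = I$, so $X_{\gamma W_{\gamma,X^{(0)}}} \to X_I = \mathcal{L}^\ast (\mathcal{L}\mathcal{L}^\ast)^{-1}(y)$, which is the minimum Frobenius norm element of $\mathcal{L}^{-1}(y)$.

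For the first equality, I would rewrite $f_\gamma(X) = n \log \gamma + \log \det(I + \gamma^{-1} X X^T)$ and exploit the sandwich, valid for any positive semidefinite $A$,
\begin{align*}
 \log\bigl(1 + \mathrm{trace}(A)\bigr) \leq \log \det(I + A) \leq \mathrm{trace}(A),
\end{align*}
the lower bound coming from expanding $\det(I+A) = \prod_i (1+\lambda_i) \geq 1 + \sum_i \lambda_i$, the upper from $\log(1+x) \leq x$. Let $X^\flat$ denote the unique minimum Frobenius norm element of $\mathcal{L}^{-1}(y)$ (unique by strict convexity) and $X_\gamma^\ast$ any minimizer of $f_\gamma$ on $\mathcal{L}^{-1}(y)$. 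The inequality $f_\gamma(X_\gamma^\ast) \leq f_\gamma(X^\flat)$ combined with the bounds above yields
\begin{align*}
 \log\bigl(1 + \gamma^{-1}\|X_\gamma^\ast\|_F^2\bigr) \leq \gamma^{-1}\|X^\flat\|_F^2,
\end{align*}
so that $\|X_\gamma^\ast\|_F^2 \leq \gamma \bigl(e^{\gamma^{-1}\|X^\flat\|_F^2} - 1\bigr) = \|X^\flat\|_F^2 + O(\gamma^{-1})$. Taking $\limsup$ gives $\limsup_{\gamma \to \infty} \|X_\gamma^\ast\|_F \leq \|X^\flat\|_F$, and since $X_\gamma^\ast \in \mathcal{L}^{-1}(y)$ with $X^\flat$ the unique minimum norm element, every accumulation point equals $X^\flat$, giving $X_\gamma^\ast \to X^\flat$ (as a set limit if $f_\gamma$ has multiple minimizers, since they all collapse onto $X^\flat$).

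I expect the main obstacle to be only notational: choosing inequalities tight enough that the argmin-set convergence (and not merely convergence of values) is secured. Both the continuity of $W \mapsto X_W$ at $W = I$ (which requires $\mathcal{L} \mathcal{L}^\ast$ to be invertible, guaranteed since $y \in \mathrm{image}(\mathcal{L})$ and we may assume $\mathcal{L}$ surjective without loss of generality) and the determinant sandwich are standard, so the remainder is a straightforward verification.
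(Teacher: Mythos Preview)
Your proof is correct and follows the same two-part structure as the paper's argument, though with different bookkeeping in each half.

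For the second equality, the paper simply remarks that $\measuredangle(W_{\gamma,X^{(0)}},I_n)\to 0$ and invokes continuity of $W\mapsto X_W$; your scale-invariance trick $X_{cW}=X_W$ together with $\gamma W_{\gamma,X^{(0)}}\to I$ is a cleaner way to make exactly the same point explicit.

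For the first equality, the paper instead relies on the polynomial expansion of \cref{sec:detexp}: from $\exp f_\gamma(X)=\gamma^n+\gamma^{n-1}\|X\|_F^2+\ldots$ it extracts $\|X_\gamma\|_F^2\leq\|X^\infty\|_F^2+c\gamma^{-1}$ directly, and then closes with the Pythagorean identity $\|X_\gamma-X^\infty\|_F^2=\|X_\gamma\|_F^2-\|X^\infty\|_F^2$, which gives an explicit rate $\|X_\gamma-X^\infty\|_F^2\leq c\gamma^{-1}$. Your log-det sandwich $\log(1+\mathrm{trace}(A))\leq\log\det(I+A)\leq\mathrm{trace}(A)$ arrives at the same norm bound $\|X_\gamma^\ast\|_F^2\leq\|X^\flat\|_F^2+O(\gamma^{-1})$ by a more self-contained route that does not appeal to the determinant expansion. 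Your accumulation-point conclusion is fine; note though that the same Pythagorean step the paper uses would immediately upgrade your $\limsup$ bound to the quantitative convergence $\|X_\gamma^\ast-X^\flat\|_F^2\leq O(\gamma^{-1})$, matching the paper and obviating the subsequence argument.
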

\begin{proof}
 The second equality follows since $\measuredangle (W_{\gamma,X^{(0)}}, I_n) \rightarrow 0$.
 Let therefore $X^\infty = \mathrm{argmin} \|X\|_F$ subject to $\mathcal{L}(X) = y$. For $\gamma > 0$, let further $X \in \mathcal{L}^{-1}(y)$ with $f_\gamma(X) \leq f_\gamma(X^\infty)$.
 Then since $f_\gamma(X^\infty) \leq \gamma^n + \gamma^{n-1} \|X^\infty\|_F^2 + \gamma^{n-2} c$ (cf. \cref{sec:detexp}), for some fixed $c > 0$, it follows that $\|X\|_F^2 \leq \|X^\infty\|_F^2 + c \gamma^{-1}$. Thereby, since $X^\infty \perp \mathrm{kernel}(\mathcal{L})$, we have
 $\|X - X^\infty\|_F^2 = \|X\|_F^2 - \|X^\infty\|_F^2 \leq c \gamma^{-1}$. Any global minima of $f_\gamma$ must fulfill this bound, and with $\gamma \rightarrow \infty$ it follows $X = X^\infty$.
\end{proof}
\subsection{Complementary weights}\label{sec:mirror}
We have so far only considered the version $f_\gamma(X) = \log \det(X X^T + \gamma I)$, but
all statements in \cref{sec:augmmatrix} analogously hold true as well for the complementary\footnote{Denoted as such since the contraction of $X$ with itself is over the complementary modes.} versions (as used in \cite{MoFa12_Ite})
\begin{align*}
  f_\gamma^{(2)} :=&\ \log \det(X^T X + \gamma I), \\
  J^{(2)}_\gamma(X,W^{(2)}) :=&\ \mathrm{trace}(W^{(2)} (X^T X + \gamma I)) - \log \det(W^{(2)}) - m \\
  =&\ \| X (W^{(2)})^{1/2} \|_F^2 + \gamma \|(W^{(2)})^{1/2}\|_F^2 - \log \det(W^{(2)}) - m.
\end{align*}
Further, while simply 
 $f^{(2)}_\gamma(X) = \sum_{i = 1}^m \log(\sigma_i(X)^2 + \gamma) = f_\gamma(X) + \log( \gamma ) \cdot(m-n)$,
the updates in $X$ and $W^{(2)}$ corresponding to $J^{(2)}_\gamma(X,W^{(2)})$ are given by
\begin{align}
 X^{(2)}_{W^{(2)}} &:= \underset{X \in \mathcal{L}^{-1}(y)}{\mathrm{argmin}} J^{(2)}_\gamma(X,W^{(2)}) = \underset{X \in \mathcal{L}^{-1}(y)}{\mathrm{argmin}} \|X (W^{(2)})^{1/2} \|_F^2, \label{eq:X2_W} \\
 W^{(2)}_{\gamma,X} &:= \underset{W^{(2)} = (W^{(2)})^T \succ 0}{\mathrm{argmin}} J^{(2)}_\gamma(X,W^{(2)}) = (X^T X + \gamma I)^{-1}. \label{eq:W2_gX}
\end{align}
In the following, when appropriate, we also use $f^{(1)}_\gamma := f_\gamma$ and $J^{(1)}_\gamma := J_\gamma$.
While interchangeable as such, a combination of the two versions proves relevant in \cref{reloptmatrix}.
\subsection{Adjusted IRLS-p algorithm}
As indicated in \cref{sec:mirror}, there are two possible, in general different, 
updates to $X$ depending on the choice of weight, which we denote via a sequence $\{s_i\}_{i \geq 0} \subset \{1,2\}$. 
\Cref{alg:irlsmr} further depends on a weakly decreasing, 
countable sequence $\{\gamma_i\}_{i \geq 0} \subset \R_{> 0}$. 
Based thereon, it defines a sequence $\{(X^{(i)},W^{(s_i,i)})\}_{i \geq 0}$. With respect to \cref{gammainf}, choosing $X^{(0)} = X_I$ yields a canonical starting value, together with a sufficiently large $\gamma^{(0)}$.
 \begin{algorithm}
  \caption{(matrix) \IRLS{}-$0$}
  \begin{algorithmic}[1] \label{alg:irlsmr}
  \STATE set $X^{(0)}$, $\gamma^{(0)} > 0$  
  \FOR{$i = 1,2,\ldots$}
   \STATE{set $s_{i-1} \in \{1,2\}$ (cf. \cref{sec:mirror})}
  \STATE{$W^{(s_{i-1},i-1)}  := W^{(s_{i-1})}_{\gamma^{(i-1)},X^{(i-1)}}$ (\cref{W_gX,eq:W2_gX})}
  \STATE{$X^{(i)} := X^{(s_{i-1})}_{W^{(s_{i-1},i-1)}}$ (\cref{X_W,eq:X2_W})}
  \STATE{set $\gamma^{(i)} \leq \gamma^{(i-1)}$}
  \ENDFOR
  \end{algorithmic}
\end{algorithm}
While the choice $s_i = s_j$, $i,j \geq 0$, yields conventional, well working \IRLS{}-$0$, alternating between complementary weights $(s_{2i},s_{2i+1}) = (1,2)$, $i \geq 0$, become decisive for the data sparse algorithm presented in \cref{reloptmatrix}.
\paragraph{Possible divergence}
Although it seems neglectable in practice, \cref{divergencelemma} shows that in principle, it is possible for the sequence $X^{(i)}$ to diverge (at a glacial pace though) for $\gamma^{(i)} \rightarrow 0$.
The therein used problem setting is the same as in \cref{divexample}.
\paragraph{Asymptotic and global behavior regarding $\gamma \searrow 0$}
Not only does \cref{convergenceexample} demonstrate that $\gamma$ can not simply be set as $0$,
it proves that if $\gamma$ is decreased too fast, the function $f_\gamma(X)$ will become flat locally along rank deficient matrices
more quickly than its minimization proceeds. 
In that case, the iterate may converge, but to a point that is neither a stationary point of $f_\gamma(X)$ nor a limit of such for $\gamma \searrow 0$.
On a global scale in turn, $\gamma$ acts similar as a median filter on $\det(X X^T)$,
and in that sense (in the optimal case) smoothes out undesired local minima.
\paragraph{Controlling the decline of $\gamma$}
Both works \cite{DaDeFoGu10_Ite,FoRaWa11_Low} use what here translates to $\gamma^{(i)} = \alpha \sigma_{K+1}(X^{(i)})$ for some $\alpha \in (0,1]$ and a sufficiently large bound $K \in \N$ on the to be found rank. However, while this strategy may not only be of theoretical benefit for the minimization of the differently behaving $S_{\gamma,p}$, $0 < p \leq 1$ (see \cref{diffSf}), we found that it will frequently cause the iteration to stagnate or, in particular for $p = 0$, cause a too rapid decay of $\gamma$. 
We instead consider a fixed rate of decline, $\gamma^{(i)} = \nu \gamma^{(i-1)}$, $\nu \in (0,1)$, if not otherwise indicated.
\subsection{Local convergence and asymptotic, stationary points}\label{sec:algasp}
The following parts $(i)$ to $(iii)$ of \cref{declinelemma} are directly based on results and reasoning from \cite{FoRaWa11_Low,MoFa12_Ite}, but also takes switching between complementary weights into account. We further extend it with part $(iv)$ which particularly considers the rate of decline of $\gamma$. We define $\mathcal{S}^\ast_\gamma$ as the stationary points\footnote{The function $f^{(2)}_\gamma$ shows the same behavior as it only differs by a constant in $\gamma$.} of $f_\gamma|_{\mathcal{L}^{-1}(y)}$, $\gamma > 0$ (subject to their respective domains).
\begin{theorem} \label{declinelemma}
 Let $\{(X^{(i)})\}_{i \geq 0}$ be generated by \cref{alg:irlsmr} for $\{s_i\}_{i \in \N_0}$
 and the weakly decreasing sequence $\{\gamma_i\}_{i \geq 0} \subset \R_{\geq 0}$ and let $\gamma^\ast := \lim_{i \rightarrow \infty} \gamma^{(i)}$.
 \begin{enumerate}[label=(\roman*)]
  \item For each $i \in \N$ and both $s \in \{1,2\}$, it holds \begin{align}\label{eq:mondec}
	  f^{(s)}_{\gamma^{(i)}}(X^{(i)}) \leq f^{(s)}_{\gamma^{(i-1)}}(X^{(i-1)}). 
	\end{align}
 \item If $\gamma^\ast > 0$, then the sequences $X^{(i)}$ and $|f^{(s)}_{\gamma^{(i)}}(X^{(i)})|$, $s \in \{1,2\}$, remain bounded.
  \item If the sequences $X^{(i)}$ and $|f_{\gamma^{(i)}}(X^{(i)})|$ remain bounded, then
      \begin{align} \label{difftozero}
        \lim_{i \rightarrow \infty} \|X^{(i)} - X^{(i-1)}\|_F = 0
       \end{align}
 and each accumulation point of $X^{(i)}$ is in $\mathcal{S}^\ast_{\gamma^\ast}$.
 \item (See \cref{iveasy}) Let $\Theta \subset \R_{> 0}$ be an arbitrary, infinite, bounded set with its only accumulation point at $\inf(\Theta) = 0$, and let
 \begin{align*}
  \delta_i := \inf_{S \in \mathcal{S}_{\gamma^{(i)}}^\ast} \| X^{(i)} - S \|, \quad i \in \N.
 \end{align*}
 For an arbitrary, bounded sequence $A = \{\alpha_i\}_{i \in \N_0}$ with $\inf(A) > 0$ (e.g. $\alpha_i = 1$, $i \in \N_0$) and for $\gamma^{(0)} = \max(\Theta)$, we recursively define
 \begin{align*}
 \gamma^{(i+1)} = \begin{cases}
		   \theta_i & \mbox{ if } \alpha_i \delta_i < \theta_i \\
		    \gamma^{(i)} & \mbox{ otherwise }
                  \end{cases}, \quad \theta_i := \max \{ z \in \Theta \mid z < \gamma^{(i)} \}, \quad i \in \N_0.
\end{align*}
 Then $\lim_{i \rightarrow \infty} \delta_i = \gamma^\ast = 0$ and for at least one subsequence $\{X^{(i_\ell)}\}_{\ell \in \N}$, there exists a sequence of stationary points $\{S_{\ell}\}_{\ell \in \N}$, $S_{\ell} \in \mathcal{S}^\ast_{\gamma^{(i_\ell)}}$, with $\|S_\ell - X^{(i_\ell)}\| \rightarrow 0$.
 \end{enumerate}
\end{theorem}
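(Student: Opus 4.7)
My strategy is to exploit, at every step, the alternating-minimization interpretation $f^{(s)}_\gamma(X) = \min_{W \succ 0} J^{(s)}_\gamma(X,W)$ established in \cref{Jtof}, together with the closed forms for the two partial minimizers given by \cref{W_gX,X_W,eq:W2_gX,eq:X2_W}. The argument for (i)--(iii) will follow the standard monotone-decrease / bounded-iterate / quadratic-dissipation template for alternating minimization, refined to track the two weight variants $s \in \{1,2\}$ and the adaptive parameter $\gamma$; part (iv) will then be handled by a case distinction on $\gamma^\ast$ that feeds on (ii) and (iii).

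For (i), I would chain \cref{Jtof} with the minimization properties underlying \cref{W_gX,X_W} to obtain
\begin{align*}
f^{(s_{i-1})}_{\gamma^{(i-1)}}(X^{(i)})
&\leq J^{(s_{i-1})}_{\gamma^{(i-1)}}\bigl(X^{(i)},W^{(s_{i-1},i-1)}\bigr) \\
&\leq J^{(s_{i-1})}_{\gamma^{(i-1)}}\bigl(X^{(i-1)},W^{(s_{i-1},i-1)}\bigr)
= f^{(s_{i-1})}_{\gamma^{(i-1)}}(X^{(i-1)}),
\end{align*}
and then absorb the $\gamma$-update using that $f^{(s)}_\gamma(X)$ is monotone in $\gamma$. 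The opposite choice of $s$ in \cref{eq:mondec} follows from the identity $f^{(2)}_\gamma = f^{(1)}_\gamma + (m-n)\log\gamma$, which is itself monotone in $\gamma$ for $m \geq n$. Part (ii) then combines (i) with the lower bound $f^{(s)}_\gamma(X) \geq n \log \gamma \geq n \log \gamma^\ast$ for the function values, and with the Frobenius bound \cref{Frobeniusbound} evaluated at $\gamma = \gamma^{(i)} \geq \gamma^\ast > 0$ for the iterates themselves.

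Part (iii) relies on the quadratic structure of $J^{(s)}_\gamma$ in $X$. Since $X^{(i)}$ minimizes $J^{(s_{i-1})}_{\gamma^{(i-1)}}(\,\cdot\,,W^{(s_{i-1},i-1)})$ on the affine set $\mathcal{L}^{-1}(y)$ and the associated form has Hessian at least $\lambda_{\min}(W^{(s_{i-1},i-1)}) \cdot I$, I obtain the dissipation estimate
\begin{align*}
f^{(s_{i-1})}_{\gamma^{(i-1)}}(X^{(i-1)}) - f^{(s_{i-1})}_{\gamma^{(i-1)}}(X^{(i)})
\geq \lambda_{\min}\bigl(W^{(s_{i-1},i-1)}\bigr) \, \|X^{(i)} - X^{(i-1)}\|_F^2.
\end{align*}
Boundedness of $X^{(i-1)}$ secures a uniform positive lower bound on $\lambda_{\min}(W^{(s_{i-1},i-1)})$ via the spectrum of $X^{(i-1)}(X^{(i-1)})^T + \gamma^{(i-1)} I$, and summing in $i$ telescopes through (i) to yield \cref{difftozero}. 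For the stationarity claim I would pass to a convergent subsequence $X^{(i_k)} \to X^\ast$; by \cref{difftozero} also $X^{(i_k-1)} \to X^\ast$, by continuity of the formula in \cref{W_gX} the weights converge to $W_{\gamma^\ast,X^\ast}$ (well-defined since bounded $|f_{\gamma^{(i)}}|$ keeps every $\sigma_j^2 + \gamma^{(i)}$ away from zero), and the orthogonality \cref{eq:X_WWortho} passes to the limit. Then \cref{stabilizerlemma} certifies $X^\ast$ as stationary.

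For (iv), I would first rule out $\gamma^\ast > 0$: in that case $\gamma^{(i)}$ is eventually constant (as $\Theta$ accumulates only at $0$), so from some index on the iteration reduces to fixed-$\gamma$ IRLS with bounded iterates, whereby (iii) forces $\delta_i \to 0$; this triggers the update condition $\alpha_i \delta_i < \theta_i$ (with $\theta_i$ eventually a fixed positive value) and contradicts that $\gamma$ stays frozen. Hence $\gamma^\ast = 0$. For the subsequence statement, at every index $i_\ell$ at which $\gamma$ actually decreases the rule itself gives $\delta_{i_\ell} < \theta_{i_\ell}/\alpha_{i_\ell} \leq \gamma^{(i_\ell+1)}/\inf(A) \to 0$, and picking any $S_\ell \in \mathcal{S}^\ast_{\gamma^{(i_\ell)}}$ that realizes the infimum in $\delta_{i_\ell}$ up to a vanishing error yields the required sequence. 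The hard part will be promoting this to $\lim_{i\to\infty} \delta_i = 0$ along the \emph{full} sequence: between two successive decreases $\gamma$ is frozen, so (iii) only provides asymptotic attraction over an infinite stretch, whereas the algorithm typically leaves the stretch after finitely many steps. Handling this will require coupling the per-step dissipation bound of (iii) with the fact that the next threshold $\theta_i$ itself tends to $0$, so that $\delta_i$ cannot oscillate away from zero between consecutive decreases; this is presumably where the separate argument deferred to \cref{iveasy} performs the detailed bookkeeping.
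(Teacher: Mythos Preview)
Your approach for (i)--(iii) is essentially the paper's: the same descent chain through $J^{(s)}_\gamma$ with the additive shift $(m-n)\log\gamma$ handling the switch of $s$, the same use of \cref{Frobeniusbound} together with the monotone bound from (i) for (ii), and the same quadratic dissipation estimate with $\lambda_{\min}(W^{(s_{i-1},i-1)}) = (\sigma_1(X^{(i-1)})^2 + \gamma^{(i-1)})^{-1}$ bounded below via boundedness of $X^{(i-1)}$, followed by a telescoping sum and continuity of $W_{\gamma,X}$ and $X_W$ to push accumulation points into $\mathcal{S}^\ast_{\gamma^\ast}$ via \cref{stabilizerlemma}.

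For (iv) your strategy again matches the paper's, only contraposed: the paper assumes $\liminf_i \delta_i > 0$, deduces that the update rule can fire only finitely often (since $\alpha_i\delta_i$ stays bounded below while $\theta_i$ would have to exceed it, and $\Theta$ has only finitely many elements above any positive threshold), hence $\gamma^{(i)}$ freezes at some $\gamma^\ast > 0$; then (ii) and (iii) produce a stationary accumulation point, contradicting the uniform lower bound on $\delta_i$. This is exactly your argument read backwards. The subsequence $\{i_\ell\}$ is then taken to be the indices at which $\gamma$ actually decreases, giving $\delta_{i_\ell} < \alpha_{i_\ell}^{-1}\theta_{i_\ell} \to 0$ just as you say.

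One correction to your expectations: \cref{iveasy} is only an informal paraphrase of (iv), not a deferred proof. The paper's own argument establishes $\liminf_i \delta_i = 0$, $\gamma^\ast = 0$, and the subsequence claim, but does \emph{not} supply the additional bookkeeping you anticipate for the full limit $\lim_i \delta_i = 0$. So the ``hard part'' you flag is not resolved elsewhere; your diagnosis that controlling $\delta_i$ on the finite stretches between decreases requires something beyond (iii) --- or that the assertion ought to read $\liminf$ --- is well taken.
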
%
\begin{remark}\label{iveasy}
Part $(iv)$ of \cref{declinelemma} can roughly be phrased as the following. If the sequence $\{\gamma^{(i)}\}_{i \in \N}$ is decreased to $\gamma^\ast = 0$ slowly enough,
then $X^{(i)}$ can only converge to a limit of stationary points of $f_\gamma|_{\mathcal{L}^{-1}(y)}$ for $\gamma \searrow 0$.
The contrary case of too fast decline is covered in \cref{para:toofastdecl}.
\end{remark}
\begin{proof}
$(i)$: For $h := (n,m) \in \N^2$ and independent of $s \in \{1,2\}$, we have
 \begin{align*}
 f^{(s)}_{\gamma^{(i)}}(X^{(i)}) & \overset{(a)}{=} f^{(s_{i})}_{\gamma^{(i)}}(X^{(i)}) + \gamma^{(i)} (h_s - h_{s_{i}})  \\
 & \overset{(b)}{=} J^{(s_{i})}_{\gamma^{(i)}}(X^{(i)},W^{(s_{i},i)}) + \gamma^{(i)} (h_s - h_{s_{i}}) \\
 & \overset{(c)}{\geq} J^{(s_{i})}_{\gamma^{(i)}}(X^{(i+1)},W^{(s_{i},i)}) + \gamma^{(i)} (h_s - h_{s_{i}}) \\
 & \overset{(d)}{\geq} J^{(s_{i})}_{\gamma^{(i)}}(X^{(i+1)},W^{(s_{i})}_{\gamma^{(i)},X^{(i+1)}}) + \gamma^{(i)} (h_s - h_{s_{i}}) \\
 & \overset{(e)}{=} f^{(s_{i})}_{\gamma^{(i)}}(X^{(i+1)}) + \gamma^{(i)} (h_s - h_{s_{i}}) \overset{(f)}{=} f^{(s)}_{\gamma^{(i)}}(X^{(i+1)}) \overset{(g)}{\geq} f^{(s)}_{\gamma^{(i+1)}}(X^{(i+1)}).
 \end{align*}
  The steps $(a)$ to $(g)$ are provided by: $(a)$ \cref{sec:mirror}, $(b)$ \cref{Jtof}, $(c)$ $X^{(i+1)} = X^{(s_{i})}_{W^{(s_{i},i)}}$ is optimum in $X$ (\cref{X_W}), $(d)$ $W^{(s_{i})}_{\gamma^{(i)},X^{(i+1)}}$ is the respective optimum in $W$ (\cref{W_gX}), $(e)$ \cref{Jtof}, $(f)$ \cref{sec:mirror} and 
  $(g)$ $\frac{\partial}{\partial \gamma} f^{(s)}_\gamma(X) \geq 0$, $s \in \{1,2\}$, for all $X$. 
  In contrast to the usual argumentation, we here require the intermediate, practically redundant step $(d)$. \\
 $(ii)$: Since (cf. \cref{Frobeniusbound})
    $\gamma^{n  - 1} \|X\|_F^{2} \leq \prod_{i=1}^{n} (\sigma_i(X)^2 + \gamma) = \exp(f_{\gamma}(X))$,
 it follows due to $(i)$ (since $f^{(1)}_\gamma = f_\gamma$) that 
$\|X^{(i)}\|_F^{2} \leq (\gamma^{(i)})^{1-n} \exp(f_{\gamma^{(1)}}(X^{(1)}))$.
 As $\gamma^{(i)}$ does not converge to zero, the sequence $X^{(i)}$ remains bounded.\\
 $(iii/1)$: 
 For $s = s_i$ (and thus $h_s - h_{s_i} = 0$), the steps $(d)$ to $(g)$ in $(i)$ provide that
 $J^{(s_i)}_{\gamma^{(i)}}(X^{(i+1)},W^{(s_i,i)}) \geq f^{(s_i)}_{\gamma^{(i+1)}}(X^{(i+1)})$.
 With $\langle X, W, X \rangle_1 := \mathrm{trace}(W X X^T)$ and $\langle X, W, X \rangle_2 := \mathrm{trace}(X^T X W)$, it then follows that
 \begin{align*}
 &\  f^{(s_i)}_{\gamma^{(i)}}(X^{(i)}) - f^{(s_i)}_{\gamma^{(i+1)}}(X^{(i+1)}) \\
  \geq &\  J^{(s_i)}_{\gamma^{(i)}}(X^{(i)},W^{(s_i,i)}) - J^{(s_i)}_{\gamma^{(i)}}(X^{(i+1)},W^{(s_i,i)}) \\
  = &\ \langle X^{(i)}, W^{(s_i,i)}, X^{(i)} \rangle_{s_i} - \langle X^{(i+1)}, W^{(s_i,i)}, X^{(i+1)} \rangle_{s_i} \\
  = &\ \langle X^{(i)} - X^{(i+1)}, W^{(s_i,i)}, X^{(i)} + X^{(i+1)} \rangle_{s_i}.
 \end{align*}
 Since $X^{(i+1)} = X^{(s_i)}_{W^{(s_i,i)}}$ and $X^{(i)} - X^{(i+1)} \in \mathrm{kernel}(\mathcal{L})$, the optimality condition \cref{eq:X_WWortho} provides that $\langle X^{(i)} - X^{(i+1)}, W^{(s_i,i)}, X^{(i+1)} \rangle_{s_i} = 0$.
 We can thus conclude
   \begin{align*}
  \langle X^{(i)} - X^{(i+1)}, W^{(s_i,i)}, X^{(i)} + X^{(i+1)} \rangle_{s_i} & = \langle X^{(i)} - X^{(i+1)}, W^{(s_i,i)},X^{(i)} - X^{(i+1)} \rangle_{s_i} \\
  & \geq \| (X^{(i)} - X^{(i+1)}) \|^2_F\  \lambda_{\min}({W^{(s_i,i)}}).
 \end{align*}
 The lowest eigenvalue of the symmetric matrix can be bounded via
 \begin{align*}
  \lambda_{\min}({W^{(s_i,i)}}) 
  & = (\sigma_1({X^{(i)}})^2 + \gamma)^{-1} \geq  \ (\|X\|_F^2 + \gamma)^{-1}.
 \end{align*}
 Thereby, as $\|X\|_F^2$ remains bounded by assumption, there exists $c > 0$ such that
 \begin{align*}
  \| (X^{(i)} - X^{(i+1)}) \|^2_F\  \lambda_{\min}({W^{(s_i,i)}}) \geq c \, \| (X^{(i)} - X^{(i+1)}) \|^2_F.
 \end{align*}
 Summing over all $i = 1,\ldots,N$, we obtain
 \begin{align*}
  &\ c \sum_{i = 1}^N \| (X^{(i)} - X^{(i+1)}) \|^2_F  \leq \sum_{i = 1}^N f^{s_i}_{\gamma^{(i)}}(X^{(i)}) - f^{s_i}_{\gamma^{(i+1)}}(X^{(i+1)}) \\
   \overset{\cref{eq:mondec}}{\leq} &\  \sum_{i = 1}^N f^{(1)}_{\gamma^{(i)}}(X^{(i)}) - f^{(1)}_{\gamma^{(i+1)}}(X^{(i+1)}) + \sum_{i = 1}^N f^{(2)}_{\gamma^{(i)}}(X^{(i)}) - f^{(2)}_{\gamma^{(i+1)}}(X^{(i+1)}) \\
    =&\  f^{(1)}_{\gamma^{(1)}}(X^{(1)}) - f^{(1)}_{\gamma^{(N+1)}}(X^{(N+1)}) + f^{(2)}_{\gamma^{(1)}}(X^{(1)}) - f^{(2)}_{\gamma^{(N+1)}}(X^{(N+1)}).
 \end{align*}
 As $f_{\gamma^{(N+1)}}(X^{(N+1)})$ (and thereby $f^{(2)}_{\gamma^{(N+1)}}(X^{(N+1)})$) remains bounded by assumption as well, the sum can not diverge, and it necessarily follows the to be shown $\| (X^{(i)} - X^{(i+1)}) \|_F \rightarrow 0$ for $i \rightarrow \infty$.\\
$(iii/2)$: For this part, it suffices to consider the initial version $f_\gamma = f^{(1)}_\gamma$,
wherefore we skip the index $(\cdot)^{(1)}$.
Let $X^{(i_\ell)}$ be a convergent subsequence of $X^{(i)}$ with limit point $X^\ast$.
In light of \cref{stabilizerlemma}, we need to show that $X^\ast = X^\ast_{W^\ast}$ for $W^{(\ast)} = W_{\gamma^\ast,X^\ast}$.
Due to $(iii/1)$ so far, we have $\lim_{\ell \rightarrow \infty} X^{(i_\ell+1)} = X^\ast$. As $W_{\gamma,X}$ depends continuously on $X$ as long as $f_\gamma(X)$ remains bounded (which may directly be implied by $\gamma^\ast > 0$), 
it follows that
\begin{align*}
 W^{(i_\ell)} = W_{\gamma^{(i_\ell)},X^{(i_\ell)}} \rightarrow_{i \rightarrow \infty} W_{\gamma^\ast,X^\ast} =: W^{(\ast)}.
\end{align*}
Further, as $X_W$ depends continuously on $W$, we also have 
\begin{align*}
  X^\ast \leftarrow_{i \rightarrow \infty} X^{(i_\ell+1)} = X_{W^{(i_\ell)}} \rightarrow_{i \rightarrow \infty} X^\ast_{W^\ast}.
\end{align*}
$(iv)$: We first assume that $\delta_{\inf} := \liminf_{i \rightarrow \infty} \delta^{(i)} > 0$. There are hence only finitely many steps with $\delta^{(i)} < \frac{1}{2} \delta_{\inf}$.
Since $\inf(A) > 0$, it follows that also $\gamma^{\ast} := \lim_{i \rightarrow \infty} \gamma^{(i)} > 0$. Further, as $\{\gamma^{(i)}\}_{i \in \N} \subset \Theta$, 
there necessarily exists an $n \in \N$ such that $\gamma^{(i)} = \gamma^{\ast}$ for all $i \geq n$. 
Thus, there exists a subsequence $\{X^{(i_\ell)}\}_{\ell \in \N}$, $i_\ell \geq n$, $\ell \in \N$, for which 
\begin{align}\label{distancepropmat}
 \|X^{(i_\ell)} - S\| \geq \frac{1}{2} \delta_{\inf} > 0,
\end{align}
for all $\ell \in \N$ and all $S \in S_{\gamma^\ast}^\ast$. As by $(ii)$ however $X^{(i)}$ remains bounded, $\{X^{(i_\ell)}\}_{\ell \in \N}$ must have an accumulation point, which by $(iii)$
is within $S_{\gamma^{\ast}}^\ast$. This is in direct contradiction to \cref{distancepropmat}, and we obtain that $\delta_{\inf} = 0$ must instead hold true.
Further, as $A$ is bounded and $\inf(\Theta) = 0$, this also implies $\gamma^\ast = 0$.
By construction, there hence exist subsequences $\{X^{(i_\ell)}\}_{\ell \in \N}$ (given through the steps in which $\gamma^{(i_\ell+1)} < \gamma^{(i_\ell)}$) as well 
as $\{S^{(\ell)}\}_{\ell \in \N}$, $S_\ell \in \mathcal{S}^\ast_{\gamma^{(i_\ell)}}$, such that
\begin{align*}
 \|X^{(i_\ell)} - S_\ell\| \leq 2 \delta_{i_\ell} \leq 2 \inf(A)^{-1} \theta_{i_\ell}, \quad i \in \N.
\end{align*}
As $\theta_{i_\ell} \rightarrow 0$ follows by $\gamma^\ast = 0$, we obtain $\|X^{(i_\ell)} - S_\ell\| \rightarrow 0$. This was to be shown.
\end{proof}
\subsection{Examples and counterexamples}
Throughout this section, it suffices to consider the conventional \IRLS{}-$0$ algorithm, that is $s_i = 1$, $i \in \N_0$.
In \cref{convergenceexample}, we discusses different cases of convergence with particular regard to \cref{declinelemma}.
It also highlights, in contrast to part \textit{(iv)}, that if $\gamma^{(i)}$ decays too fast to $\gamma^\ast = 0$, the sequence $X^{(i)}$
may converge, but not to a limit of stationary point of $f_\gamma$.\\ 
The subsequent \cref{divergencelemma} then provides a rather rare case of divergence, and gives a counter example
for \cref{declinelemma}, part \textit{(ii)}, given $\gamma^\ast = 0$.\\
Regarding part \textit{(iii)}, it remains unclear whether $X^{(i)}$ can in fact have multiple
accumulation points, or if the assertion can be improved. Also part \textit{(iv)} makes the impression that it may be possible to derive a stronger implication, but proving or disproving this likewise remains subject to future research.
\begin{example}\label{convergenceexample}
 Let $\mathcal{L}: \R^{2 \times 2} \rightarrow \R^2$ be a linear operator and $y \in \R^2$ such that
 \begin{align*}
  \mathcal{L}^{-1}(y) = \{ X(a,b) \mid a,b \in \R \}, \quad X(a,b) := \begin{pmatrix}
                                                                       a & 1 \\
                                                                       1 & b
                                                                      \end{pmatrix}.
 \end{align*}
 The matrix $X(a,b)$ has rank $1$ if and only if $ab = 1$. While there is not a unique solution to the rank minimization problem, we have
\begin{align*}
 \mathcal{X}^\ast := \underset{X \in \mathcal{L}^{-1}(y),\ \mathrm{rank}(X) = 1}{\mathrm{argmin}} \sigma_1(X) = \left\{ \begin{pmatrix}
  1 & 1 \\
  1 & 1
 \end{pmatrix}, \begin{pmatrix}
  -1 & 1 \\
  1 & -1
 \end{pmatrix} \right\}.
\end{align*}
The only stationary points of $f_\gamma$, $\gamma \in [0,1)$, in turn are given by
\begin{align*}
\mathcal{X}^{\gamma,\ast} := \left\{
 \begin{pmatrix}
  -\sqrt{1-\gamma} & 1 \\
  1 & -\sqrt{1-\gamma}
 \end{pmatrix}, \
  \begin{pmatrix}
  0 & 1 \\
  1 & 0
 \end{pmatrix}, \
  \begin{pmatrix}
  \sqrt{1-\gamma} & 1 \\
  1 & \sqrt{1-\gamma}
 \end{pmatrix} \right\},
\end{align*}
where the second one is repellent.
 For $X^{(i)} = X(a_i,b_i) = X_{W_{\gamma,X^{(i-1)}}}$, $i \in \N$, we have the rational functions (cf. \cref{rationalupdate}) $a_i = q_1(a_{i-1},b_{i-1})$ and $b_i = q_2(a_{i-1},b_{i-1})$ with
 \begin{align*}
  q_1(a,b) = \frac{a+b}{1 + \gamma + b^2}, \quad q_2(a,b) = \frac{a+b}{1 + \gamma + a^2}.
 \end{align*}
 Short calculations then show that 
 \begin{align*}
  0 < q_1(a,b) \cdot q_2(a,b) \leq 1 \quad & \Leftrightarrow \quad a + b \neq 0 \\
  q_1(a,b) = 0 \quad \Leftrightarrow \quad q_2(a,b) = 0 \quad & \Leftrightarrow \quad a + b = 0.
 \end{align*}
 Thus, the stationary point $a = b = 0$ is either reached directly or never.
 Further, for any $a,b \in \R$, it holds true that 
 \begin{align}
  |q_1(a,b) - q_2(a,b)| \leq |a-b|, \label{contractive}
 \end{align}
 where equality requires $a = b$, or $\gamma = 0$ and $ab = 1$.
The only attracting fixedpoints $(a,b) = (q_1(a,b),q_2(a,b))$ are given through the three cases
\begin{align*}
  a = b = 0, \quad & \mbox{ if } 1 \leq \gamma,\\
  a = b = \sqrt{1 - \gamma}, \quad & \mbox{ if } 0 < \gamma < 1,\\
  ab = 1, \quad & \mbox{ if } \gamma = 0.
\end{align*}
The fact that here the lowest norm solution (cf. \cref{gammainf}) is a local maximum of $f_\gamma$, $0 < \gamma < 1$, is however not representative for the general situation, 
but rather coincidentally holds true.
The behavior of $X^{(i)}$ now greatly depends on the sequence $\{\gamma^{(i)}\}_{i \in \N}$.
Without loss of generality, we assume $0 < a_1 b_1 \leq 1$, $a_1 \geq b_1$, in the following three cases.
\paragraph{(i)} Firstly, for $\gamma^\ast := \lim \gamma^{(i)} > 0$, we know by \cref{declinelemma} that $X^{(i)}$ will converge to either one of the two attracting fixedpoints in $\mathcal{X}^{\gamma,\ast}$.
\paragraph{(ii)} For $\gamma = 0$, we have
\begin{align*}
 0 < ab < 1 \quad \Rightarrow \quad |q_1(a,b)| > |a|\ \wedge\ |q_2(a,b)| > |b|,
\end{align*}
whereby the sequence $X^{(i)}$ given $\gamma^{(i)} \equiv 0$ will converge from below to a rank $1$ matrix with 
\begin{align*}
 a_1^2 + b_1^2 + 2 \leq \|\lim_{i \rightarrow \infty} X^{(i)}\|_F^2 \leq (a_1 - b_1)^2 + 4,
\end{align*}
where the second inequality follows due to \cref{contractive} and $a_i b_i \leq 1$, $i \in \N$. 
The norm of the limit can thus be arbitrarily large, yet a single sequence never diverges.
\paragraph{(iii)} Given $\gamma^\ast := \lim \gamma^{(i)} = 0$, the rate of decline is deciding.\label{para:toofastdecl}
Let $1 < s < a$ (or for that matter $0 < b < \frac{1}{s}$) as well as $\mu > 0$. Then given
\begin{align*}
 0 < \gamma = \Gamma(a,b) := \mu \cdot (a/s-1 + b/s - b^2),
\end{align*}
it is $q_1(a,b) > s \Leftrightarrow \mu < 1$. Thus, for fixed $0 < \mu < 1$, $a_1 > s$ and 
\begin{align*}
 \gamma^{(i)} := \min(\gamma^{(i-1)},\Gamma(a_i,b_i)),
\end{align*}
it follows with \cref{contractive} that $a_i \rightarrow s$ and $b_i \rightarrow \frac{1}{s}$
as well as $\gamma_i \searrow 0$. On the other side, 
\begin{align*}
 \gamma^{(i)} = \min(\gamma^{(i-1)},\sqrt{1-\gamma^{(i-1)}} - |b_{i-1}|),
\end{align*}
will always yield a sequence for which $X^{(i)}$ converges to a point in $\mathcal{X}^\ast$ (cf. \cref{declinelemma}, part $(iv)$).
\end{example}
\begin{proposition}\label{divergencelemma}
 Let, as in \cref{divexample}, $\mathcal{L}: \R^{2 \times 3} \rightarrow \R^4$ be a linear operator and $y \in \R^4$ such that
 \begin{align*}
  \mathcal{L}^{-1}(y) = \{ X(a,b) \mid a,b \in \R \}, \quad X(a,b) := \begin{pmatrix}
                                                                       a & 1 & a + 1 \\
                                                                       b + 1 & b & b + 1
                                                                      \end{pmatrix}.
 \end{align*}
 Assume now that $\gamma^{(i)} \equiv 0$, $i \in \N$, and $X^{(1)} = X(a_1,b_1)$ for $1 < a_1$ and $0 < b_1 < \frac{1}{a_1 + \frac{1}{2a_1}}$. 
 Then
 \begin{align*}
  \sigma_1(X^{(i)}) \rightarrow \infty, \quad \sigma_2(X^{(i)}) \rightarrow 0.
 \end{align*}
 The iterate thus diverges, but comes arbitrarily close to the set of rank $1$ matrices. In particular, it is
 $X^{(i)} = X(a_i,b_i)$ for $a_{i-1} < a_i$ and $0 < b_i < \frac{1}{a_i + \frac{1}{2a_i}}$, $i \in \N$, with
 $(a_i,b_i) \rightarrow (\infty,0)$.
\end{proposition}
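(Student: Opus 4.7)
The plan is to reduce \IRLS{}-$0$ on the two-parameter slice $\mathcal{L}^{-1}(y)=\{X(a,b) \mid a,b\in\R\}$ to an explicit recurrence $(a_{i+1},b_{i+1})=(q_1(a_i,b_i),q_2(a_i,b_i))$ with rational $q_1,q_2$ (cf.\ \cref{rationalupdate}), and to establish the asserted invariant by induction; the divergence then follows by observing that no fixed point of the recurrence lies in the admissible region.

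Since $b_i>0$ throughout (to be maintained inductively), the minor of $X(a_i,b_i)$ on columns $1,3$ equals $-(b_i+1)\neq 0$, so $A:=X(a_i,b_i)X(a_i,b_i)^T$ is invertible and the weight is $W^{(i)}=A^{-1}$. By \cref{X_W} applied to the family $X(a',b')$, the next iterate minimizes $(a',b')\mapsto \mathrm{trace}(W^{(i)} X(a',b')X(a',b')^T)$; setting the partial derivatives in $a',b'$ to zero and clearing $\det A>0$ gives the $2\times 2$ linear system
\begin{align*}
 A_{22}(2a_{i+1}+1) &= 2 A_{12}(b_{i+1}+1),\\
 A_{11}(3b_{i+1}+2) &= 2 A_{12}(a_{i+1}+1),
\end{align*}
where $A_{11}=2a_i^2+2a_i+2$, $A_{12}=2a_ib_i+2a_i+2b_i+1$ and $A_{22}=3b_i^2+4b_i+2$. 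Its determinant $6 A_{11}A_{22}-4 A_{12}^2 = 2\det(A)+(4 A_{11}A_{22}-2 A_{12}^2)>0$ is strictly positive (using $A\succ 0$), so Cramer's rule produces $q_1,q_2$ explicitly as rational functions in $(a_i,b_i)$.

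The crux of the proof is the inductive verification that, whenever $a_i>1$ and $0<b_i< 2a_i/(2a_i^2+1)$, one has (a) $b_{i+1}>0$, (b) $a_{i+1}>a_i$, and (c) $b_{i+1}<2a_{i+1}/(2a_{i+1}^2+1)$. Each reduces, after multiplying through by the positive denominator of $q_1,q_2$, to a polynomial inequality in $(a_i,b_i)$ that must be shown on the open region carved out by the hypothesis. Finding the right factorizations as sums of manifestly positive terms (possibly after isolating boundary contributions from the constraints $a_i=1$ and $b_i=2a_i/(2a_i^2+1)$) is the main technical obstacle; the choice of invariant is tight but not violated, consistent with the leading-order behaviour $a_{i+1}\approx a_i$ and $b_{i+1}\approx 1/(2a_{i+1})$ as $a\to\infty$, $b\to 0$ (so just under the threshold $2a_{i+1}/(2a_{i+1}^2+1)\approx 1/a_{i+1}$).

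Once the invariant holds for all $i$, the sequence $\{a_i\}$ is strictly increasing and bounded below by $1$. Suppose for contradiction $a^*:=\lim a_i<\infty$. Then by the invariant $\{b_i\}$ is bounded, and any accumulation point $b^*$ satisfies $0\leq b^*\leq 2a^*/(2(a^*)^2+1)$; the denominator of $q_1,q_2$ stays strictly positive on the compact closure $\{1\leq a\leq a^*,\,0\leq b\leq 1\}$, so the update is continuous there and $(a^*,b^*)$ is a fixed point of the recurrence. By \cref{stabilizerlemma} this forces $(a^*,b^*)$ to be a stationary point of $f_0|_{\mathcal{L}^{-1}(y)}$, but \cref{divexample} enumerates all such stationary points and none has $a>1,\,b\geq 0$, a contradiction. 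Hence $a_i\to\infty$, the invariant forces $b_i\to 0$, and thus $\sigma_1(X^{(i)})\geq \|X^{(i)}\|_F/\sqrt{2}\to\infty$. Finally, by \cref{declinelemma}(i), $\det(X^{(i)}(X^{(i)})^T)=\exp(f_0(X^{(i)}))$ is monotonically decreasing; direct evaluation through the three $2\times 2$ minors $a_ib_i-b_i-1$, $-(b_i+1)$, $1-a_ib_i$ shows that this determinant stays bounded in the regime $a\to\infty$, $b\to 0$ (converging to $1$ exactly along $ab\to 1$), whence $\sigma_2(X^{(i)})^2=\det(X^{(i)}(X^{(i)})^T)/\sigma_1(X^{(i)})^2\to 0$.
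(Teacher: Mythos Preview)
Your plan matches the paper's essentially line for line: reduce to the two–parameter recurrence $(a_{i+1},b_{i+1})=(q_1(a_i,b_i),q_2(a_i,b_i))$, maintain the invariant $a_i>1$, $0<b_i<B(a_i)$ with $B(a)=2a/(2a^2+1)$, and conclude $a_i\to\infty$, $b_i\to 0$. Your derivation of the linear system for $(a_{i+1},b_{i+1})$ and its nonsingularity is correct and agrees with the explicit $q_1,q_2$ the paper records in \cref{polyincrlemma}. For the technical core you only outline, the paper's concrete device is worth knowing: substitute $a=1+c$, $b=B(1+c)/(1+s)$ with $c,s\geq 0$; each of (a), (b), (c) then becomes $P(c,s)/Q(c,s)>0$ where $P,Q$ are polynomials with nonnegative coefficients and strictly positive constant term, so positivity is immediate. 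This is exactly the ``factorization as sums of manifestly positive terms after isolating boundary contributions'' you anticipate, made explicit.

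There is one small but genuine gap in your contradiction step. From $a_i\nearrow a^\ast<\infty$ and $b_{i_k}\to b^\ast$ you only get $q_1(a^\ast,b^\ast)=a^\ast$ (since $a_{i_k+1}\to a^\ast$ as well), \emph{not} $q_2(a^\ast,b^\ast)=b^\ast$; the subsequence $\{b_{i_k+1}\}$ need not have the same limit as $\{b_{i_k}\}$. So \cref{stabilizerlemma} cannot be invoked via ``$(a^\ast,b^\ast)$ is a fixed point''. Two clean repairs: (1) as in the paper's \cref{polyincrlemma}(iv), use only the relation $q_1(a^\ast,b^\ast)=a^\ast$ and contradict it with the strict inequality (b), which extends to the closure since $N_1(a^\ast,0)-a^\ast D_1(a^\ast,0)=4(a^\ast-1)^2+6(a^\ast-1)>0$; or (2) invoke \cref{declinelemma}(iii) directly: the invariant bounds $X^{(i)}$, and $\det(X^{(i)}(X^{(i)})^T)\geq (b_i+1)^2\geq 1$ keeps $|f_0(X^{(i)})|$ bounded, so every accumulation point is a stationary point of $f_0|_{\mathcal{L}^{-1}(y)}$, and \cref{divexample} lists none with $a>1$. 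The paper in fact remarks on route (2) as the alternative argument immediately after its proof.
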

Experiments show that $X^{(i)}$ also diverges similarly for other starting values and $\gamma^{(i)} \searrow 0$.
However, given the canonical starting value $X^{(0)}$ as in \cref{gammainf}, $X^{(i)}$ will converge to the global minimizer given through $(a,b) = (-1,-2/3)$.
Interestingly (cf. \cref{divexample}), the function $f_\gamma$ for the problem setting in \cref{divergencelemma} also happens to exhibit a diverging sequence of stationary points for $\gamma \searrow 0$.
Whether this property or the fact that $\sigma_r(X^{(i)})$ converges to zero is in general related to the possibility of a diverging sequence $X^{(i)}$ however remains unclear.
\begin{proof}
It is $X^{(i)} = X(a_i,b_i)$ where $a_i$ and $b_i$ are rational polynomials dependent on both $a_{i-1}$ and $b_{i-1}$ as carried out in \cref{polyincrlemma}.
With the properties shown therein, it follows that $\sigma_1(X^{(i)}) \rightarrow \infty$ and $\sigma_2(X^{(i)}) \rightarrow 0$.
\end{proof}
Alternatively, the previous can be shown in a less elementary based on \cref{declinelemma} and \cref{divexample} which only requires to prove $a_i > 1$ and $b_i > 0$
(in \cref{polyincrlemma}, these are parts $(i)-(iii)$). 
\section{Alternating iteratively reweighted least squares (\AIRLS{})}\label{reloptmatrix}
For large matrices, it becomes a computational burden to maintain the equality $\mathcal{L}(X) = y$.
In the following, we consider a relaxation of such as it allows for data sparse algorithms to be applied that require to violate that exact constraint. 
\subsection{Relaxation of affine constraint}\label{fagamma}
Let $a_\gamma: \R \rightarrow \R$, $a_\gamma(s) := s - n \log(\gamma)$, $\gamma > 0$. As these function are monotonically increasing, a composition with such does not change minimizers. We correspondingly define
\begin{align*}
 f_\gamma^a(X) := a_\gamma \circ f_\gamma(X) = \log( \prod_{i = 1}^{\infty} 1 + \frac{\sigma_i(X)^2}{\gamma} ), \quad J^{a}_{\gamma}(X,W) := a_\gamma \circ J_\gamma(X,W).
\end{align*}
Likewise, we also have $f^a_\gamma(X) = f^{(2)}_\gamma(X) - m\log(\gamma)$.
For an appropriate, constant scaling factor $c_{\mathcal{L}} > 0$ and $\omega > 0$, we can then
weaken the affine constraint $\mathcal{L}(X) = y$ into an additional penalty term,
\begin{align}
 F_{\gamma,\omega}^{a}(X) & := \| \mathcal{L}(X) - y \|_F^2 + c_{\mathcal{L}}\cdot \omega^2 \cdot f_\gamma^a(X), \nonumber \\
 \mathcal{J}^{a}_{\gamma,\omega}(X,W) & := \| \mathcal{L}(X) - y \|_F^2 + c_{\mathcal{L}}\cdot \omega^2 \cdot J^{a}_{\gamma}(X,W). \label{relaxedJ}
\end{align}
The updates of weight matrices will be the same as for the constraint, original version.
We are particularly interested in the limit $\omega \rightarrow 0$ and desire to obtain asymptotically identical updates for the iterate $X$. And indeed, by \cref{omegaredlemma}, we have that
\begin{align*}
 \lim_{\omega \rightarrow 0} \underset{X \in \R^{n_1 \times n_2}}{\mathrm{argmin}}\ \mathcal{J}^{a}_{\gamma,\omega}(X,W) & = \underset{X \in \mathcal{L}^{-1}(y)}{\mathrm{argmin}}\ J^{a}_{\gamma}(X,W).
\end{align*}
However, this argument is of course weaker than rigorous statements about local convergence.
While the objective function $F^a_{\gamma,\omega}$ is also decreased when lowering $\omega$, this does no longer hold true for $\gamma$.
However, this problem can be circumvented by making $\omega$ dependent of $\gamma$ in terms of $\omega_\gamma := \sqrt{\gamma}$. Since $\frac{\partial}{\partial \gamma} \gamma \cdot (\log(s^2 + \gamma) - \log(\gamma)) \geq 0$, we obtain
\begin{align*}
 \frac{\partial}{\partial \gamma} F_{\gamma,\omega_\gamma}^{a}(X) = c_{\mathcal{L}} \cdot \frac{\partial}{\partial \gamma} (\gamma \cdot f_{\gamma}^{a}(X)) \geq 0.
\end{align*}
In the following, we accordingly skip the index $\omega$. 
\begin{corollary}\label{mondeclt}
We consider a modified version of \cref{alg:irlsmr}, in which the updates are instead derived from a minimization of the relaxed $\mathcal{J}^a_\gamma(X,W)$, \cref{relaxedJ}. 
Then still\,\footnote{The other assertions of \cref{declinelemma} may (likely) hold true as well, but this remains subject to future research.}
 $F^a_{\gamma^{(i)}}(X^{(i)}) \leq F_{\gamma^{(i-1)}}^{a}(X^{(i-1)})$,
holds true for all $i \in \N$.
\end{corollary}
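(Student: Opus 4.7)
The plan is to mirror the argument used for part $(i)$ of \cref{declinelemma}, replacing each appearance of $f_\gamma$ and $J_\gamma$ by $F^a_\gamma$ and $\mathcal{J}^a_\gamma$, and replacing the fact $\partial_\gamma f^{(s)}_\gamma \geq 0$ by the monotonicity $\partial_\gamma F^a_\gamma \geq 0$ that was already established in \cref{fagamma} via the choice $\omega_\gamma = \sqrt{\gamma}$. The two preparatory ingredients I need are: (a) the $W$-minimizer of $\mathcal{J}^a_{\gamma}(X,W)$ is still $W_{\gamma,X}$ since $\|\mathcal{L}(X)-y\|_F^2$ is independent of $W$ and $J^a_\gamma$ differs from $J_\gamma$ only by an additive constant in $\gamma$, so \cref{W_gX} transfers directly; (b) the identity $\mathcal{J}^a_\gamma(X,W_{\gamma,X}) = F^a_\gamma(X)$, which is immediate from \cref{Jtof} combined with the composition by $a_\gamma$ and the fact that the penalty term carries through unchanged.

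With these in hand, the main chain of inequalities is the same five-step sandwich as in $(i)$: starting from $F^a_{\gamma^{(i-1)}}(X^{(i-1)}) = \mathcal{J}^a_{\gamma^{(i-1)}}(X^{(i-1)},W^{(i-1)})$ for $W^{(i-1)} = W_{\gamma^{(i-1)},X^{(i-1)}}$, use optimality of $X^{(i)}$ in the $X$-step and then re-optimality of $W^{(s_i)}_{\gamma^{(i-1)},X^{(i)}}$ in the $W$-step to obtain $\mathcal{J}^a_{\gamma^{(i-1)}}(X^{(i-1)},W^{(i-1)}) \geq \mathcal{J}^a_{\gamma^{(i-1)}}(X^{(i)}, W^{(s_i)}_{\gamma^{(i-1)},X^{(i)}}) = F^a_{\gamma^{(i-1)}}(X^{(i)})$. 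Finally, $F^a_{\gamma^{(i-1)}}(X^{(i)}) \geq F^a_{\gamma^{(i)}}(X^{(i)})$ by the $\gamma$-monotonicity of $F^a_\gamma$ established in \cref{fagamma}, which concludes the proof. The switching between complementary weights $s_i \in \{1,2\}$ can be absorbed exactly as in steps $(a)$--$(g)$ of the proof of \cref{declinelemma}$(i)$, using $f^a_\gamma = f^{(s)}_\gamma - h_s \log\gamma$ so that the discrepancy between the two versions reduces to a constant in $X$ and $W$.

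The main delicate point is justifying the $\gamma$-monotonicity of $F^a_\gamma$ across the complementary choice $s_i \in \{1,2\}$: one has to verify that replacing $f^a_\gamma$ by the analogous object built from $f^{(2)}_\gamma$ does not spoil $\partial_\gamma F^a_\gamma \geq 0$. This is where the coupling $\omega_\gamma = \sqrt{\gamma}$ and the elementary inequality $\partial_\gamma \bigl(\gamma \log(1 + s^2/\gamma)\bigr) \geq 0$ enter essentially, and both $s = 1,2$ reduce to it because $f^{(1)}_\gamma$ and $f^{(2)}_\gamma$ have the same nonzero singular-value content. Everything else is bookkeeping parallel to the unrelaxed setting.
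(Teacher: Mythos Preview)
Your proposal is correct and is exactly the approach the paper intends: the paper's own proof is the single line ``The argumentation is analogous as $\frac{\partial}{\partial \gamma} (\gamma \cdot f^a_{\gamma}(X)) \geq 0$,'' and you have faithfully unpacked what ``analogous'' means by repeating the chain $(a)$--$(g)$ from the proof of \cref{declinelemma}$(i)$ with $F^a_\gamma$ and $\mathcal{J}^a_\gamma$ in place of $f_\gamma$ and $J_\gamma$, using the already-established $\gamma$-monotonicity of $F^a_\gamma$ for the final step. One small simplification you could note: since $f^a_\gamma$ itself is independent of $s \in \{1,2\}$ (by construction $f^a_\gamma = f^{(s)}_\gamma - h_s\log\gamma$ for both $s$), the $(h_s - h_{s_i})$ bookkeeping disappears at the level of $F^a_\gamma$ and is only needed when passing through the intermediate $\mathcal{J}^{(s_i),a}_\gamma$.
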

\begin{proof}
 The argumentation is analogous as $\frac{\partial}{\partial \gamma} (\gamma \cdot f^a_{\gamma}(X)) \geq 0$.
\end{proof} 
\subsection{Data sparse, alternating optimization}\label{altopt}
Given that the aim of optimization is the minimization of the rank, it is reasonable to restrict the optimization to the variety
\begin{align*}
 V_{\leq R} = \{ X \in \R^{n \times m} \mid \mathrm{rank}(X) \leq R \} = \{ X = YZ \mid Y \in \R^{n \times R}, \ Z \in \R^{R \times m} \}
\end{align*}
of at most rank $R$ matrices. The value $R$ may be chosen adaptively or as the bound
\begin{align}\label{boundR}
 R = 1 + \max \{ r \in \N \mid \mathrm{dim}(V_{\leq r}) \leq \ell \} = 1 + \lfloor \frac{1}{2}(n+m - \sqrt{(n+m)^2 - 4\ell}) \rfloor,
\end{align}
where $\mathrm{dim}(V_{\leq r}) = (n+m)r - r^2$. With this choice, the degrees of freedom in the utilized low rank representations in fact exceeds the number of measurements, so $R$ is always large enough (and based only on available information).
Instead of directly optimizing $X$ (and the weight $W$), its sought components $Y$ and $Z$ can be treated alternatingly, as we similarly did in our prior works \cite{GrKr19_Sta,Kr20_Tre}\footnote{The derivation of weights within these works follows a different, more heuristic approach based on perturbed representations, but the resulting algorithms are (quite interestingly) similar.} under the name \SALSA{} (stable ALS approximation). Therein, it is also reasoned why this and related methods essentially are immune to overestimation of $R$. However, the approach requires the relaxation of the constraint $\mathcal{L}(X) = y$ as in \cref{fagamma}.
Further, if the order of complexity $\mathcal{O}(nm)$ is to be avoided, then the operator $\mathcal{L}$ must exhibit some form of simpler representation as well. In the following, we therefore assume that 
there exists a low rank $r_L$ decomposition of the operator $\mathcal{L}(X) = L \mathrm{vec}(X)$ itself in form of 
\begin{align*}
 L = \sum_{i = 1}^{r_L} L_{2,i}^T \circledast L_{1,i} \in \R^{\ell \times nm}, \quad L_{1,i} \in \R^{\ell \times n},\ L_{2,i} \in \R^{m \times \ell},\quad i = 1,\ldots,r_L,
\end{align*}
where $\circledast$ is the row-wise Khatri-Rao product. Given this decomposition, it follows that
 $\mathcal{L}(Y Z) = \sum_{i = 1}^{r_L} (L_{1,i} Y) \odot (Z L_{2,i})^T$, for all $Y \in \R^{n \times R}$, $Z \in \R^{R \times m}$,
 where $\odot$ is the Hadamard product.
 \begin{remark}
  For sampling operators (cf. \cref{sec:numexp}), such a decomposition is trivially achieved for $r_L = 1$. 
  Further, due to separability, the rows of $Y$ and columns of $Z$ can be treated independently, that is, in the calculation of \cref{Yupdate,Zupdate}.
 \end{remark}
 In the following, let $X = U_R \Sigma_R V_R^T$ be the reduced version of the SVD $X = U \Sigma V^T$, for $U_R \in \R^{n \times R}$, $\Sigma_R \in \R^{R \times R}$, $V_R \in \R^{m \times R}$.
Utilizing the ambiguity within the representation of $X$, one can always achieve that either $Y = U_R$ or $Z = V_R^T$ without changing $X$.
In the first case, the update in $Z$ is determined by
\begin{align}\label{simpLYZ}
 & \ \underset{Z \in \R^{R \times m}}{\mathrm{argmin}}\ \mathcal{J}^a_{\gamma}(YZ,W) = \underset{Z \in \R^{R \times m}}{\mathrm{argmin}} \| \mathcal{L}(Y Z) - y \|_F^2 + c_{\mathcal{L}} \cdot \gamma \cdot \|W^{1/2} Y Z \|_F^2.
 \end{align}
Under given circumstances, this term can be simplified significantly. Due to its analogous derivation, we state the following for general $p \in [0,1]$.
\begin{lemma}
 Let $Y = U_R$ and $X = U_R \Sigma_R V_R^T$. Then
 \begin{multline}
 Z_{\gamma,Y,\Sigma_R} :=  \ \underset{Z \in \R^{R \times m}}{\mathrm{argmin}}\ \mathcal{J}^a_{\gamma}(Y Z,W_{\gamma,X}) \\ \label{Zupdate}
 = \underset{Z \in \R^{R \times m}}{\mathrm{argmin}} \
 \big\| \big( \sum_{i = 1}^{r_L} L^T_{2,i} \circledast (L_{1,i} Y ) \big) \mathrm{vec}(Z) - y \big\|^2_F + c_{\mathcal{L}} \gamma  \big\| (\Sigma_R^2 + \gamma I_R)^{-1/2+p/4} Z \big\|_F^2.
 \end{multline}
\end{lemma}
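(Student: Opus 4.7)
The plan is to start from \cref{simpLYZ}, which already reduces the minimization in $Z$ to
\begin{align*}
\underset{Z \in \R^{R \times m}}{\mathrm{argmin}}\ \|\mathcal{L}(YZ) - y\|_F^2 + c_\mathcal{L}\gamma\|W_{\gamma,X}^{1/2} YZ\|_F^2,
\end{align*}
and then simplify the two summands separately. The first is a matter of rewriting $\mathcal{L}(YZ)$ as a linear map applied to $\mathrm{vec}(Z)$, and the second exploits that $Y = U_R$ has orthonormal columns.

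For the data-fit term, I would use the Khatri-Rao decomposition $L = \sum_{i=1}^{r_L} L_{2,i}^T \circledast L_{1,i}$ together with the identity stated just before the lemma, namely $\mathcal{L}(YZ)_k = \sum_i (L_{1,i} Y Z L_{2,i})_{k,k}$. Expanding the double sum componentwise gives $\mathcal{L}(YZ)_k = \sum_{r,j} Z_{rj}\sum_i (L_{1,i}Y)_{k,r}(L_{2,i}^T)_{k,j}$, which is precisely the $k$-th row of $\bigl(\sum_i L_{2,i}^T \circledast (L_{1,i}Y)\bigr)$ applied to $\mathrm{vec}(Z)$. This identifies $\widetilde{L}_Y := \sum_i L_{2,i}^T \circledast (L_{1,i}Y)$ as the desired representation matrix, so that $\|\mathcal{L}(YZ) - y\|_F^2 = \|\widetilde{L}_Y \mathrm{vec}(Z) - y\|_F^2$.

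For the penalty term, I would use the (partial) SVD $X = U_R \Sigma_R V_R^T$ together with $Y = U_R$. Completing $U_R$ to an orthogonal matrix by an orthogonal complement $U_R^\perp$, the identity $I_n = U_R U_R^T + U_R^\perp (U_R^\perp)^T$ yields
\begin{align*}
 XX^T + \gamma I_n = U_R (\Sigma_R^2 + \gamma I_R) U_R^T + \gamma\, U_R^\perp (U_R^\perp)^T,
\end{align*}
which is already an eigendecomposition. Raising to the power $p/4-1/2$ (to obtain $W_{\gamma,X}^{1/2}$ for $p\in[0,1]$) acts blockwise on the spectral components. Since $(U_R^\perp)^T U_R = 0$, multiplication by $Y = U_R$ annihilates the $U_R^\perp$ block and gives $W_{\gamma,X}^{1/2} Y = U_R (\Sigma_R^2 + \gamma I_R)^{p/4-1/2}$. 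The orthonormality $U_R^T U_R = I_R$ then provides $\|W_{\gamma,X}^{1/2}YZ\|_F^2 = \|(\Sigma_R^2+\gamma I_R)^{-1/2+p/4} Z\|_F^2$.

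Putting both simplifications together yields the stated form of $Z_{\gamma,Y,\Sigma_R}$. The main obstacle is essentially bookkeeping: verifying the index ordering in the row-wise Khatri-Rao product so that $\widetilde{L}_Y\mathrm{vec}(Z)$ indeed matches the trace-style expansion of $\mathcal{L}(YZ)$. The spectral simplification of the penalty term, by contrast, is a direct consequence of $Y$ lying in the range of $U_R$, so no subtleties beyond expanding $I_n$ along $U_R$ and $U_R^\perp$ arise there.
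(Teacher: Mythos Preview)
Your proposal is correct and follows essentially the same route as the paper: starting from \cref{simpLYZ}, you rewrite the data-fit term via the Khatri-Rao representation of $\mathcal{L}$ and simplify the penalty term by splitting $W_{\gamma,X}$ along $U_R$ and $U_R^\perp$, then using $(U_R^\perp)^T U_R = 0$ and $U_R^T U_R = I_R$. The only cosmetic difference is that the paper decomposes $W_{\gamma,X}$ directly rather than first decomposing $XX^T + \gamma I_n$ and then raising to the power $p/4-1/2$, but the two are equivalent.
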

\begin{proof}
 On the one hand, we can simplify
 \begin{align*}
  \| \mathcal{L}(Y Z) - y \|_F = \| \big(\sum_{i = 1}^{r_L} L^T_{2,i} \circledast (L_{1,i} Y ) \big) \mathrm{vec}(Z) - y \|_F.
 \end{align*}
On the other hand, with 
\begin{align*}
 W_{\gamma,X} = U (\Sigma \Sigma^T + \gamma I)^{-1+p/2} U^T = U_R (\Sigma_R^2 + \gamma I_R)^{-1+p/2} U^T_R + \gamma^{-1+p/2} U_R^{\perp} (U_R^{\perp})^T
\end{align*}
it follows that
\begin{align}\label{simpWYZ}
 \|W^{1/2} Y Z \|_F = \| U_R (\Sigma_R^2 + \gamma I)^{-1/2+p/4} U_R^T U_R Z \| = \| (\Sigma_R^2 + \gamma I_R)^{-1/2+p/4} Z \|,
\end{align}
where we used that $U = (U_R, U_R^{\perp}) \in \R^{n \times (R + (n-R))}$ with $U_R^T U_R^{\perp} = 0$.
\end{proof}
In the second case, $Z = V^T$, aboves simplification analogously works for \cref{simpLYZ},
but \cref{simpWYZ} requires the same modification of the augmented objective function as in \cref{sec:mirror}.
Therefor, the component $Y$ and a corresponding weight matrix $W^{(2)} \in \R^{m \times m}$, constrained by $W^{(2)} = (W^{(2)})^T \succ 0$, are instead updated as minimizers of
\begin{align*}
 \mathcal{J}^{2,a}_{\gamma}(X,W^{(2)}) := a_\gamma \circ \left( \mathrm{trace}(W^{(2)} (X^T X + \gamma I_m)) - \log \det(W^{(2)}) - m \right).
\end{align*}
This is the mirrored version of the biased, original choice $\mathcal{J}^a_{\gamma}(X,W)$ (which corresponds to $\mathcal{J}^{1,a}_{\gamma}(X,W^{(1)})$). 
As the update in $W$ then yields $W^{(2)}_{\gamma,X} = V (\Sigma^T \Sigma + \gamma I)^{-1} V^T$ (cf. \cref{sec:mirror}), an analogous simplification as in \cref{simpWYZ} now follows due to symmetry of the problems with respect to transposition of $X$.
\begin{corollary}
 Let $Z = V_R^T$ and $X = U_R \Sigma_R V_R^T$. Then
   \begin{multline}
  Y_{\gamma,\Sigma_R,Z} :=  \ \underset{Y \in \R^{n \times R}}{\mathrm{argmin}}\ \mathcal{J}^{2,a}_{\gamma}(Y Z,W^{(2)}_{\gamma,X}) \label{Yupdate} \\
 =  \ \underset{Y \in \R^{n \times R}}{\mathrm{argmin}} \| \big(\sum_{i = 1}^{r_L} (Z L_{2,i})^T \circledast L_{1,i} \big) \mathrm{vec}(Y) - y \|^2_F + c_{\mathcal{L}} \gamma  \| Y (\Sigma_R^2 + \gamma I_R)^{-1/2+p/4} \|_F^2.
  \end{multline}
\end{corollary}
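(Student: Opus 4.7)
The plan is to mirror the proof of the preceding lemma under transposition, since the corollary is the complementary version obtained by swapping the roles of $Y$ and $Z$ and replacing $\mathcal{J}^a_\gamma$ with $\mathcal{J}^{2,a}_\gamma$. As a first step, I would split the objective $\mathcal{J}^{2,a}_{\gamma}(YZ, W^{(2)}_{\gamma,X})$ into the data fidelity part $\|\mathcal{L}(YZ)-y\|_F^2$ and the penalty part $c_{\mathcal{L}}\gamma \|YZ(W^{(2)}_{\gamma,X})^{1/2}\|_F^2$. The remaining summands (namely $\gamma\|(W^{(2)})^{1/2}\|_F^2 - \log\det(W^{(2)}) - m$ together with the $a_\gamma$ shift) are independent of $Y$ and drop out of the argmin.

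For the data fidelity term, I would use the vectorization identity $\mathrm{vec}(YZ) = (Z^T \otimes I_n)\mathrm{vec}(Y)$ together with the row-wise Khatri-Rao decomposition $L = \sum_i L_{2,i}^T \circledast L_{1,i}$. The key algebraic step is that for each summand, $(L_{2,i}^T \circledast L_{1,i})(Z^T \otimes I_n)\mathrm{vec}(Y) = ((ZL_{2,i})^T \circledast L_{1,i})\mathrm{vec}(Y)$; this is the transposed companion of the identity invoked in the proof of the preceding lemma. Summing over $i$ then produces exactly the first term of the claimed minimizer.

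For the weight term, I would substitute the Schatten-$p$ analogue of \cref{eq:W2_gX}, namely $W^{(2)}_{\gamma,X} = V(\Sigma^T\Sigma + \gamma I_m)^{p/2-1}V^T$, and split $V = (V_R, V_R^\perp)$. The assumption $Z = V_R^T$ forces $ZV_R = I_R$ and $ZV_R^\perp = 0$. Combined with the block structure of $(\Sigma^T\Sigma + \gamma I_m)^{p/4-1/2}$ (with leading block $(\Sigma_R^2 + \gamma I_R)^{p/4-1/2}$ and trailing block $\gamma^{p/4-1/2} I_{m-R}$), this collapses $Z(W^{(2)}_{\gamma,X})^{1/2}$ to $(\Sigma_R^2 + \gamma I_R)^{p/4-1/2} V_R^T$. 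Taking Frobenius norms and exploiting the orthonormality $V_R^T V_R = I_R$ then yields the claimed regularizer $\|Y(\Sigma_R^2 + \gamma I_R)^{-1/2+p/4}\|_F^2$, entirely symmetric to the simplification obtained in \cref{simpWYZ} for the $Z$-update.

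The main technical point, although essentially routine, will be the Khatri-Rao bookkeeping in the data fidelity step: one needs to verify carefully that the roles of $Y$ and $Z$ swap cleanly and that the cross term is $(ZL_{2,i})^T \circledast L_{1,i}$ acting on $\mathrm{vec}(Y)$ rather than some permuted variant. Once that identity is in hand, the remainder is a direct assembly of the two simplifications above.
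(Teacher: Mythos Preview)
Your proposal is correct and follows exactly the route the paper indicates: the paper's proof consists solely of the remark that the update follows by the symmetry of the two problems under transposition of $X$, invoking the complementary weight $W^{(2)}_{\gamma,X} = V(\Sigma^T\Sigma + \gamma I)^{-1}V^T$ and pointing to the analogous simplification in \cref{simpWYZ}. You have simply spelled out that analogy in full (the Khatri-Rao rewriting of the data term and the collapse of the weight term via $Z = V_R^T$), which is precisely what the paper leaves implicit.
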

Following the aboves scheme as summarized in \cref{alg:datasparsematrix}, it is thus possible to alternatingly optimize $Y$ and $Z$ (as well as their respective weight matrices) with the same neglectable computational complexity as plain alternating least squares, compared to the otherwise required order imposed by the size $n m$ of $X$. 
Further, \cref{mondeclt} still holds true for $X^{(i)} := Y^{(i)} Z^{(i)}$, whereby a monotonic decrease of the objective function is guaranteed. Though \cite{FoRaWa11_Low,MoFa12_Ite} also consider more effective updates or variations of conventional \IRLS{}, in both cases one still iterates over the full matrix $X$.
More specific convergence results for \AIRLS{} on the other hand are subject to future work. 
 \begin{algorithm}
  \caption{\AIRLS{}-$p$}
  \begin{algorithmic}[1] \label{alg:datasparsematrix}
  \STATE set $Y^{(0)}$ (arbitrary), $Z^{(0)}$ (row-orthogonal), $\gamma^{(0)} > 0$  
  \FOR{$i = 1,2,\ldots$}
  \STATE{calculate the SVD $U_R \Sigma_R \widetilde{V}^T := Y^{(i-1)}$}
  \STATE{replace $Y^{(i-1)} := U_R$ and update $Z^{(i)} := Z_{\gamma^{(i-1)},U_R,\Sigma_R}$ (see \cref{Zupdate})}
  \STATE{calculate the SVD $\widetilde{U} \Sigma_R V_R^T := Z^{(i)}$}
  \STATE{replace $Z^{(i)} := V_R^T$ and update $Y^{(i)} := Y_{\gamma^{(i-1)},\Sigma_R,V_R^T}$  (see \cref{Yupdate})}
  \STATE{set $\gamma^{(i)} \leq \gamma^{(i-1)}$}
  \ENDFOR
  \end{algorithmic}
\end{algorithm}%
\section{Numerical experiments}\label{sec:numexp}
For simplicity, we choose $n = m$ in all numerical experiments and thereby only consider square matrices. As mentioned in the introduction, \cref{sec:intro}, we are mainly interested in the ARM problem itself, which is why the criteria for success are as laid out in \cref{sec:expsetup}. The analogous analysis of the briefly considered ACM is not further elaborated, as only $\sigma(X) \in \R^n$ needs to be replaced with the absolute values of the vector $x \in \R^n$.
For the \textsc{Matlab} code behind all results, please contact the author.
\subsection{Reference solutions, measurements vectors and operators}
Each measurement vector is constructed via a (not necessarily sought for) rank $r \in \N$ reference solution, 
which in turn relies on a randomly generated low rank decomposition,
\begin{align*}
 y = \mathcal{L}(X^\rs) \in \R^\ell, \quad X^\rs  = Y^\rs Z^\rs \in \R^{n \times m}, \quad Y^\rs \in \R^{n \times r}, \ Z^\rs \in \R^{r \times m}.
\end{align*}
All entries of the two components $Y^\rs$ and $Z^\rs$ are assigned independent, normally distributed entries. 
For the operator $\mathcal{L}$, we distinguish between two types.
\paragraph{Gaussian measurements}\label{gaussop}
With a Gaussian measurements, we refer to
 $\mathcal{L}(X) := L\ \mathrm{vec}(X)$,
generated via a random matrix $L \in \R^{\ell \times nm}$
with independent, normally distributed entries.
\paragraph{Random sampling operator}\label{samplop}
As sampling operator, we denote
  $\mathcal{L}(X) := \{ X_{p_i} \}_{i = 1}^\ell$,
 for uniformly drawn indices $\{p_1,\ldots,p_\ell\} \subset \{1,\ldots,n\} \times \{1,\ldots,m\}$. We however repeat the random number generation until at least $r$ entries of each column and each row of the corresponding matrix structure are observed.
\subsection{Solution methods}\label{sec:solmeth}
Based on a sufficiently large starting value $\gamma^{(0)} > 0$, we choose $\gamma^{(i)} = \nu \gamma^{(i-1)}$,
where $\nu < 1$ remains constant throughout each single run of an algorithm. If
not otherwise specified, the default weight strength, as it is our main interest, is given through $p = 0$. Note that due to convex nature for $p = 1$, \IRLS-$1$ always finds the one nuclear norm minimizer $\mathrm{argmin}_{X \in \mathcal{L}^{-1}(y)} \|X\|_{\ast}$ if only the sequence $\{\gamma^{(i)}\}_{i \in \N_0}$ declines reasonably slow.
We consider the following three types of optimization. 
\paragraph{Full, image based (\textsc{IRLS}-$p$)}
As in \cref{X_W}, the full matrix is optimized based on the (literally interpreted)
image update formula. When instability threatens to occur, 
the equivalent kernel based update formula \cref{kernelbased} for $X_0 = X^{(0)}$ is used instead.
\paragraph{Full, relaxed}
The relaxed constraints described in \cref{fagamma} are utilized, resulting in non equivalent updates compared to the image or kernel method. In this case, the residual $\|\mathcal{L}(X) - y\|$ is expected to converge to $0$ parallel to the decline of $\gamma$, but this is not guaranteed.
\paragraph{Alternating (\textsc{AIRLS}-$p$)}
The relaxed objective function subject to alternating between complementary weights is used, minimized by the alternating \cref{alg:datasparsematrix} as introduced in \cref{altopt}. The rank of the applied variety is fixed as the non-restrictive bound reasoned in \cref{boundR},
dependent only on the available problem size.
\subsection{Experimental setup and evaluation}\label{sec:expsetup}
The reference solution $X^\rs$ is not necessarily the sought for solution. So in order to evaluate to output $X^{(\mathrm{alg})}$,
in order to evaluate the output $X^\alg$ we compare their \textit{non-neglectable} singular values. We therefor define
\begin{align*}
 \mathrm{det}^2_{n,\gamma,\varepsilon}(X) & := \gamma^{n-\mathrm{rank}_{\varepsilon}(X)} \prod_{i = 1}^{\mathrm{rank}_{\varepsilon}(X)} (\sigma_i(X)^2 + \gamma), 
\end{align*}
for $\mathrm{rank}_{\varepsilon}(X) := \max \{i \in \{1,\ldots,n\} \mid \sigma_i(X) > \epsilon \cdot \|X\|_F \}$, $\epsilon := 10^{-6}$.
We firstly examine the residual norm, secondly compare the approximate ranks, and lastly compare the products of singular values. 
The latter two aspects are reflected by the limit
\begin{align*}
 \mathcal{Q}_\varepsilon(X^{(\mathrm{alg})},X^\rs) := 
 \lim_{\gamma \searrow 0} \frac{\mathrm{det}_{n,\gamma,\varepsilon}(X^{(\mathrm{alg})})}{\mathrm{det}_{n,\gamma,\varepsilon}(X^\rs)} 
  \in [0,0.98] \cup (0.98,1.005) \cup [1.005,\infty].
\end{align*}
The three intervals are related to the categorization into improvements, successes or fails as outlined below, whereas
the limits $0$ or $\infty$ are reached if and only if $\mathrm{rank}_{\varepsilon}(X^{(\mathrm{alg})})$ and $\mathrm{rank}_{\varepsilon}(X^\rs)$ differ.
\paragraph{Post iteration}\label{para:postiter}
As the truncation of minor singular values is a numerical necessity,
additional care has been taken when this process may falsify the results.
Therefor, we alternatingly and sufficiently often project\footnote{While this process is suitable for already approximately low rank matrices, 
it does not work well as a standalone algorithm, apart from the fact that the rank is generally unknown.} any $X^\alg$ in question to the sets $\mathcal{L}^{-1}(y)$ 
and $V_{\leq r}$, where $r$ is then chosen as rank of $X^\rs$.
This usually allows to reduce the parameter $\varepsilon$ to machine precision.
\paragraph{Details of comparison}
If $\|\mathcal{L}(X^{\alg}) - y\| > 10^{-6} \|y\|$ or if for the quotient, it holds $\mathcal{Q}_\varepsilon(X^{\alg},X^\rs) = \infty$, then the result is considered a \textit{strong failure}.
If $\|\mathcal{L}(X^{\alg}) - y\| \leq 10^{-6} \|y\|$, then on the one hand we refer to $1.005 \leq \mathcal{Q}_\varepsilon(X^{\alg},X^\rs) < \infty$ as \textit{weak failure}.
On the other, for $0.98 < \mathcal{Q}_\varepsilon(X^{\alg},X^\rs) < 1.005$, we consider the result \textit{successful}, while for $\mathcal{Q}_\varepsilon(X^{\alg},X^\rs) \leq 0.98$,
 we say the result is an \textit{improvement}, subject to the consideration above.
\paragraph{Sensitivity analysis}
We lower the meta parameter $\nu = \nu_k = \sqrt{\nu_{k-1}}$ (cf. \cref{sec:solmeth}), starting with $\nu_0 = 1.2$, and rerun the respective algorithm from the start until the result is not a \textit{failure}.
However, after too many reruns $k > k_{\max}$, we give up and thus either achieve a \textit{weak} or \textit{strong} \textit{failure} depending on the result for $k = k_{\max}$.
All other meta parameters for each algorithm are common to all respective experiments.
\subsection{Presentation of results}\label{sec:presres}
Each experiment is reflected upon in three different ways as summarized in \cref{metatable}.
\paragraph{ACM/ARM/recovery tables}
For each instance, we list the percentual numbers of ACM/ARM improvements, successes or fails as defined in \cref{sec:expsetup}. Successes are further distinguished regarding recoveries, that is whether $\|X^\alg - X^\rs\|_F \leq 10^{-4} \|X^\rs\|_F$. In near all cases where this is fulfilled, the relative residual even falls below $10^{-6}$ (see \cref{sec:visres}), in which case the algorithm stops automatically\footnote{Needless to say, this is the only point at which the reference solution itself is used within the algorithm, and only done in order to save unnecessary computation time.}.
Note that both improvements as well as fails with respect to ASRM naturally nearly exclude recoveries with accuracy $10^{-4}$, and always so for $10^{-6}$.
\paragraph{ACM/ARM/recovery figures}
More distinguished visualizations of the results underlying the above mentioned tables can be found in \cref{sec:visres} as described therein.
\paragraph{$\gamma$-decline sensitivity}
A depiction of results regarding the sensitivity analysis outlined in \cref{sec:expsetup} is covered in \cref{sec:visres} as well.
\def\externaltablecaption#1#2#3{\label{#1}table as specified in \cref{sec:presres} for \cref{#2} (see \cref{#3} for more details)}
\subsection{Affine rank minimization}\label{sec:exp1}
\begin{experiment}\label{exp1}
 For $n = 12$, $r = 3$ and $\ell \in \{63,72,108\}$, we consider
 the ARM problem based on \textit{Gaussian measurements} under varying choice of the 
 weight strength $p \in \{0,0.2,0.4,0.6,0.8,1\}$ utilizing \textit{full, image based} updates. Each constellation is repeated $1000$ times for $k_{\max} = 12$.
 The results are covered in \cref{tab1,res1,res1b}.
\end{experiment}%
\externaltable{htb!}{table_2_matrix_by_p}{\externaltablecaption{tab1}{exp1}{res1b}}%
The degrees of freedom of at most rank $r$ square matrices of size $n$ is $\dim(V_{\leq r}) = 2 n r - r^2$, which in \cref{exp1} results in $\dim(V_{\leq r}) = 63$. For $\ell = 63$, \IRLS{}-$p$ (at least for small $p$) quite often yields improvements (which is surprisingly different to the vector case as discussed in \cref{sec:exp0}), but despite the minimal number of measurements also often manages to reconstruct the reference solution.
In particular the sensitivity analysis for $\ell = 72$ demonstrates (in accordance with \cite{MoFa12_Ite}) that $p = 0$ seems to overall be the
best choice in this setting, whereas the most significant differences can be observed between $p = 0.6$ and $p = 1$. 
\subsection{Observing the theoretical phase transition for matrix recovery}\label{sec:phasetrans}
\begin{experiment}\label{exp10}
 For $n = 12$, $r = 3$ and $\ell \in \{62,63,64\}$, we consider
 the ARM problem based on \textit{samples} or \textit{Gaussian measurements} for the 
 weight strength $p = 0$ utilizing \textit{full, image based} updates. Each constellation is repeated $1000$ times for the increased value $k_{\max} = 14$.
 The results are covered in \cref{tab10}.
\end{experiment}%
\externaltable{htb!}{table_n1_matrix_conj}{\externaltablecaption{tab10}{exp10}{res10b}}%
As in \cref{exp1}, we have $\dim(V_{\leq r}) = 63$. From \cref{tab10}, we can clearly observe that $\ell = 62$ are as expected too few measurements to allow for any recovery\footnote{as no reference solution happens to minimize the product of singular values}.
In turn, the value $\ell = \dim(V_{\leq r}) + 1$ (which here is $\ell = 64$)
is the minimal sufficient amount of \textit{generic}\footnote{To be more precise, \textit{generic} in that context is an algebraic property that is stronger than the ones that stem from analysis or probability theory, but roughly similar.} measurements (thus not including sampling) to provide $\mathcal{L}^{-1}(\mathcal{L}(X^\rs)) \cap V_{\leq r_\rs} = \{ X^\rs \}$ for \textit{generic} $X^\rs \in V_{\leq r_\rs}$, as more generally proven in \cite{BrGeMiVa21_Alg}. 
Indeed, there are only multiple solutions for $\ell \leq 63$, while for $\ell = 64$,
the exceptions displayed in \cref{tab10} fail to withstand the post-iteration (cf. \cref{para:postiter}) and thus do in fact not approximate truly rank $r$ matrices.
\subsection{Alternating, affine rank minimization}\label{sec:exp2}
\begin{experiment}\label{exp2}
 For $n = 12$, $r = 3$ and $\ell \in \{63,72,108\}$, we consider
 the ARM problem based on \textit{samples} or \textit{Gaussian measurements} utilizing either \textit{full, image based} or \textit{relaxed} updates (with weight strength $p = 0$). In the sample case, we additionally consider \textit{alternating} optimization.
 Each constellation is repeated $1000$ times for $k_{\max} = 12$. The results are covered in \cref{tab2,res2,res2b}.
\end{experiment}%
\externaltable{htb!}{table_3_matrix_by_setting}{\externaltablecaption{tab2}{exp2}{res2b}}%
Clearly, sample based ARM and the related matrix completion pose more difficult problems. 
Interestingly, it seems as in some cases, an utterly slow decay of $\gamma$ may yet provide successful solutions (in stark contrast to the vector case, see \cref{res0}), though this behavior is less extreme for Gaussian measurements (see \cref{res2}). Note that in this setting, $k = 12$ corresponds to $\nu \approx 1.00004^{-1}$,
which will result in multiples of $100.000$ iterations\footnote{Considering that the starting value is the same canonical one for each rerun of the algorithm and that $\nu$ approaches values very close to $1$, it seems unlikely that multiple reruns produce better results also by chance. We have further confirmed this presumption in more specific tests.}. Further, the difference between image/kernel based or relaxed updates appears marginal, whereas \AIRLS{}-$0$ provides similar results. Both \IRLS{}-$0$ and its alternating version frequently improve upon the reference solution, but in that case, a recovery is naturally impossible. This gray zone in the case of completion problems is even more spread out (in $\ell$) than for Gaussian measurements, which is in accordance with general theory.
\begin{experiment}\label{exp3}
 For $n \in \{50,200,500,1000\}$, $r \in \{5,7\}$ and $\ell = c_{\mathrm{mf}} \dim(V_{\leq r}) = c_{\mathrm{mf}} (2nr - r^2)$, $c_{\mathrm{mf}} \in \{1.2,1.6,2\}$, we consider the ARM problem based on \textit{samples} approached via \textit{alternating} optimization (with weight strength $p = 0$). Each constellation is repeated $100$ times. The results are covered in \cref{tab3,tab3c,res3,res3b,res3c,res3d}.
\end{experiment}%
\externaltable{htb!}{table_4_alt_matrix_completion_r5_by_value}{\externaltablecaption{tab3}{exp3}{res3b}}
\externaltable{htb!}{table_5_alt_matrix_completion_r7_by_value}{\externaltablecaption{tab3c}{exp3}{res3d}}%
Overall, \AIRLS{}-$0$ seldomly provides a worse result than the reference solution, and only so for larger mode sizes and marginal measurements factors. For rank $r = 7$, hardly any ARM failure can be observed, while in both cases, a measurements factor of $c_{\mathrm{mf}} = 2$ provides a (near) perfect completion rate. Note here that failure to complete the reference solution is not the fault of any ARM algorithm, but $V_{\leq r} \cap \mathcal{L}^{-1}(y)$ may simply contain more than just $X^\rs$.
Enlarging the rank of the reference solution subject to a constant measurements factor seems to in turn provide easier problems. 
\section{Conclusions and outlook}
We have shown that while seemingly neglectable in practice, the possible degeneracy of 
some ARM problems may impose theoretical boundaries, as well as be related to 
diverging sequences of IRLS iterates.
We have embedded the distinguished structure of the log-det approach to ARM that underlies IRLS-$0$ into
a broader, nested minimization scheme, in order to 
prove global convergence properties. 
Further, we have proven how the convergence of IRLS-$0$ iterates to undesired solutions can be caused merely through a too fast decline of the regularization parameter $\gamma$. In numerical experiments, we have confirmed that even subject to a moderately fast
decrease of $\gamma$, IRLS-$0$ is the overall optimal choice. 
As further laid out, ACM and ARM exhibit particularly different behaviour that become most noticeable when the number of measurements tends to a minimal level.
Following the extension of local convergence results to allow for a switching between complementary weights,
we have presented an A(lternating)IRLS-$p$ method that directly minimizes the original log-det objective function.
This method is as data-sparse and low in computational complexity as other representation based approaches, yet it is immune to an overestimation of the rank. This work will be continued to the tensor setting, 
in which we generalize IRLS-$0$ to sum-of-ranks minimization, as well as AIRLS-$0$ to  hierarchical
tensor decompositions.

\appendix
\FloatBarrier
\section{(Remaining proofs)} \label{sec:minorproofs}
\begin{proof} (of \cref{nestedminimization})
Clearly, $A^\ast \subset A_{n+1} = D$. Let therefore $A^\ast \subset A_{k}$ be true for all $k = s+1,\ldots,n$
as well as $a^\ast \in A^\ast$, $\widetilde{a} \in A_{s+1}$. 
Due to the nestedness condition \cref{assnested} towards the minimizers of $g_k$, $k = s+1,\ldots,n$, it inductively follows that
 \begin{align}\label{strongernested}
A_k = \{ a \in D \mid g_k(a) =  \min_{b \in D} g_k(b) \} \subset A_{k+1}, \quad k = s+1,\ldots,n. 
 \end{align}
We consider a sequence of minimizers $\{a_\gamma\}_{\gamma > 0}$ of $G_\gamma$ with $a_\gamma \rightarrow a^\ast$.
Then by definition
$G_\gamma(\widetilde{a}) - G_\gamma(a_\gamma) \geq 0$ for all $\gamma > 0$.
It thereby follows that
\begin{align*}
g_s(\widetilde{a}) - g_s(a_\gamma) \geq  R_\gamma(a_\gamma) - R_\gamma(\widetilde{a}) + \sum_{k = s+1}^n \gamma^{s-k} (g_k(a_\gamma) - g_k(\widetilde{a}))  ,
\end{align*}
for functions $R_\gamma(a_\gamma), R_\gamma(\widetilde{a}) \in \mathcal{O}(\gamma)$.
Since $\widetilde{a} \in A_{s+1} \subset \ldots \subset A_n$, and by \cref{strongernested} $g_k(a_\gamma) \geq g_k(\widetilde{a})$ for all $k = s+1,\ldots,n$, we have
$g_s(\widetilde{a}) \geq g_s(a_\gamma) + R_\gamma(a_\gamma) - R_\gamma(\widetilde{a})$.
Taking the limit $\gamma \searrow 0$ thus yields
\begin{align*}
 g_s(\widetilde{a}) \geq g_s(a^\ast).
\end{align*}
Since both $\widetilde{a} \in A_{s+1}$ and $a^\ast \in A^\ast$ were arbitrary, this shows $A^\ast \subset A_s$, which was to be shown.
Inserting $\widetilde{a} = a^\ast$ further yields that $\gamma^{s-k} (g_k(a_\gamma) - g_k(a^\ast)) \searrow 0$ for $\gamma \searrow 0$
as the lefthand side vanishes.
In particular, this also implies that 
\begin{align*}
 |g_k(a_\gamma) - g_k(a^\ast)| \leq c \gamma^{k-s}, \quad k = s+1,\ldots,n.
\end{align*}
This finishes the proof.
\end{proof}

\begin{proof} (direct version for \cref{convergencelemma})
Let $\overline{X} = \lim_{\gamma \searrow 0} X_\gamma$. Assume that there exists a rank $r$ matrix $X^\ast \in \mathcal{L}^{-1}(y)$ with $\prod_{i = 1}^r \sigma_i(X^\ast) = q_0 \prod_{i = 1}^r \sigma_i(\overline{X})$ for $q_0 < 1$. Thereby, there is (a unique) $\alpha > 0$ with $\prod_{i = 1}^r \sigma_i(X^\ast)^2 = \prod_{i = 1}^r (\sigma_i(\overline{X})-2 \alpha)^2$. Then for all $X \in \mathcal{L}^{-1}(y)$ with $\|X - \overline{X}\|_F \leq \alpha$
it holds true that
\begin{align*}
 q_\gamma := \frac{\exp f_\gamma(X^\ast)}{\exp f_\gamma(X)} & = \prod_{i = 1}^r \frac{\sigma_i(X^\ast)^2 + \gamma}{\sigma_i(X)^2 + \gamma} \prod_{i = r + 1}^n \frac{\gamma}{\sigma_i(X)^2 + \gamma} \leq 
 \prod_{i = 1}^r \frac{\sigma_i(X^\ast)^2 + \gamma}{(\sigma_i(\overline{X}) - \alpha)^2} \underset{\gamma \searrow 0}{\longrightarrow} q_0,
\end{align*}
where we used that $(\sigma_i(X) - \sigma_i(\overline{X}))^2 \leq \sum_{i = 1}^n (\sigma_i(X) - \sigma_i(\overline{X}))^2 \leq \|X - \overline{X}\|^2_F \leq \alpha^2$.
Thus, there exists $\Gamma_1 > 0$ such that $q_\gamma < 1$ for all $0 < \gamma \leq \Gamma_1$. Since however $X_\gamma$ converges to $\overline{X}$, there is $\Gamma_2 > 0$ for which $\|X_\gamma - \overline{X}\|_F \leq \alpha$ for all $0 < \gamma \leq \Gamma_2$. This is in direct contraction to $f_{\gamma}(X_\gamma) = \min_{X \in \mathcal{L}^{-1}(y)} f_{\gamma}(X) \leq f_{\gamma}(X^\ast)$.
\end{proof}

\begin{lemma}\label{minimization}
Given $\ell \leq n \leq k$, let $L \in \R^{\ell \times n}$ and $H \in \R^{k \times n}$ both have full rank. The solution to $x^\ast := \underset{Lx = y}{\mathrm{argmin}}\ \frac{1}{2} \|Hx\|_F^2$ is 
  $x^\ast = (H^T H)^{-1} L^T (L (H^T H)^{-1} L^T)^{-1} y$.
Given a representation $K \in \R^{n \times n - \ell}$ of the kernel of $L$, the solution can also be written as
$x^\ast = x_0 - K(K^T H^T H K)^{-1} K H^T H x_0$,
where $x_0$ is one arbitrary solution to $L x_0 = y$.
\end{lemma}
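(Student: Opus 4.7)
The plan is to treat the two closed-form expressions separately, each by a standard method. The problem is a strictly convex quadratic program with affine equality constraints (strict convexity follows since $H$ has full column rank, which forces $H^T H \succ 0$; this uses $k \geq n$), so a unique minimizer exists and is characterized by first-order KKT conditions.

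For the first (image-based) formula, I would apply the Lagrange multiplier method. Form $\mathcal{L}(x,\lambda) = \tfrac{1}{2}\|Hx\|_F^2 + \lambda^T(Lx-y)$. The stationarity condition in $x$ is $H^T H x + L^T \lambda = 0$, so $x = -(H^T H)^{-1} L^T \lambda$. Substituting into the primal constraint $Lx = y$ yields the $\ell \times \ell$ system $-L(H^T H)^{-1} L^T \lambda = y$. The key verification is that $L(H^T H)^{-1} L^T$ is invertible: since $L$ has full row rank (using $\ell \leq n$) and $(H^T H)^{-1}$ is SPD, this matrix is itself SPD. Solving for $\lambda$ and back-substituting gives exactly $x^\ast = (H^T H)^{-1} L^T (L(H^T H)^{-1} L^T)^{-1} y$.

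For the second (kernel-based) formula, I would parametrize the feasible set as $\{x_0 + Kv \mid v \in \R^{n-\ell}\}$, where $\mathrm{image}(K) = \mathrm{kernel}(L)$. The objective becomes
\begin{align*}
 \tfrac{1}{2}\|H(x_0 + Kv)\|_F^2 = \tfrac{1}{2} v^T K^T H^T H K v + v^T K^T H^T H x_0 + \tfrac{1}{2}\|Hx_0\|_F^2,
\end{align*}
an unconstrained strictly convex quadratic in $v$ (the Hessian $K^T H^T H K$ is SPD because $K$ has full column rank and $H^T H \succ 0$). Setting the gradient to zero gives $v^\ast = -(K^T H^T H K)^{-1} K^T H^T H x_0$, and hence $x^\ast = x_0 - K(K^T H^T H K)^{-1} K^T H^T H x_0$. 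This matches the stated expression (reading the second $K$ in the displayed formula as $K^T$).

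There is no real obstacle: the work reduces to verifying invertibility of the two Gram-type matrices ($H^T H$ and either $L(H^T H)^{-1} L^T$ or $K^T H^T H K$), which follows immediately from the full-rank assumptions on $L$, $H$, $K$ together with the dimensional ordering $\ell \leq n \leq k$. The rest is routine linear algebra, and either KKT or direct substitution confirms global optimality via strict convexity.
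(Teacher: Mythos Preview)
Your proposal is correct and follows essentially the same approach as the paper: Lagrange multipliers for the image-based formula and parametrization of the feasible set by $x_0 + Kv$ followed by an unconstrained least-squares solve for the kernel-based formula. Your additional remarks on invertibility and strict convexity (and the observation that the displayed $K$ should read $K^T$) are accurate but go slightly beyond what the paper's proof spells out.
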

\begin{proof}
 The corresponding Lagrangian is $\mathcal{L}(x,\lambda) = \frac{1}{2} \|Hx\|_F^2 - \lambda^T (Lx - y)$ with optimality
conditions
\begin{align*}
 \frac{\partial}{\partial \lambda} \mathcal{L}(x,\lambda) & = Lx - y \overset{!}{=} 0, \quad \frac{\partial}{\partial x} \mathcal{L}(x,\lambda) = H^T Hx - L^T \lambda \overset{!}{=} 0.
\end{align*}
The second equation yields $x = (H^T H)^{-1} L^T \lambda$. Inserted in the first one, we obtain
\begin{align*}
 Lx & = L (H^T H)^{-1} L^T \lambda = y \quad \Leftrightarrow \quad \lambda = (L (H^T H)^{-1} L^T)^{-1} y.
\end{align*}
Inserting this in turn yields
$x = (H^T H)^{-1} L^T (L (H^T H)^{-1} L^T)^{-1} y$.
For the second part, consider that $x^\ast = x_0 + \underset{v \in \R^{n - \ell}}{\mathrm{argmin}}\ \frac{1}{2} \|H(x_0 + Kv)\|_F^2$
involves an ordinary least squares problem which leads to the stated solution.
\end{proof}

\begin{lemma}\label{omegaredlemma}
 For $L \in \R^{\ell \times m}$, $\ell \leq m$, as well as $y \in \mathrm{image}(L)$ and an invertible matrix $B \in \R^{m \times m}$, let
  $x_\omega := \mathrm{argmin}_{x \in \R^m}\ \|Lx - y\|_F^2 + \omega^2 \|Bx\|_F^2$.
 Then
  $\lim_{\omega \rightarrow 0} x_\omega = \mathrm{argmin}_{x \in \R^m, \ Lx = y}\ \|Bx\|_F$.
\end{lemma}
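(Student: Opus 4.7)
(plan for \cref{omegaredlemma})
The plan is to reduce the problem to the classical Tikhonov regularization of an unweighted least squares problem by a change of variables, and then invoke the well-known convergence of Tikhonov regularizers to the minimum-norm least squares solution.

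First, since $B$ is invertible, I substitute $z := B x$, so that $x = B^{-1} z$ and $\|B x\|_F = \|z\|_F$. Setting $A := L B^{-1}$, the minimization problem defining $x_\omega$ becomes
\begin{align*}
 z_\omega = \underset{z \in \R^m}{\mathrm{argmin}}\ \|A z - y\|_F^2 + \omega^2 \|z\|_F^2, \qquad x_\omega = B^{-1} z_\omega.
\end{align*}
The normal equations give the explicit form $z_\omega = (A^T A + \omega^2 I)^{-1} A^T y$, which is defined for every $\omega > 0$ because $A^T A + \omega^2 I$ is strictly positive definite.

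Next, I would use the SVD $A = U \Sigma V^T$ with $\Sigma = \mathrm{diag}(\sigma_1,\ldots,\sigma_\ell,0,\ldots,0)$ (padded with zeros in case $A$ is rank deficient, though here $B$ invertible implies $\mathrm{image}(A) = \mathrm{image}(L)$ so $y \in \mathrm{image}(A)$). Substituting the SVD into the explicit formula yields
\begin{align*}
 z_\omega = V \, \mathrm{diag}\!\left(\tfrac{\sigma_i}{\sigma_i^2 + \omega^2}\right) U^T y.
\end{align*}
For each $i$ with $\sigma_i > 0$, $\sigma_i/(\sigma_i^2+\omega^2) \to 1/\sigma_i$ as $\omega \to 0$, while for $\sigma_i = 0$ the coefficient is already zero. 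Hence $z_\omega \to A^+ y$, the Moore--Penrose pseudoinverse solution, which is by construction the element of minimal Euclidean norm in $\{z \mid A z = y\}$ (this set being nonempty because $y \in \mathrm{image}(A)$).

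Finally, I translate back: $x_\omega = B^{-1} z_\omega \to B^{-1} A^+ y$. Since the affine sets $\{z \mid A z = y\}$ and $\{x \mid L x = y\}$ are in bijection via $x = B^{-1} z$, and this bijection identifies $\|z\|_F$ with $\|B x\|_F$, the minimum-norm $z$ corresponds exactly to the $\|B \cdot \|_F$-minimizer over $\{Lx = y\}$. This yields the claimed limit. There is no essential obstacle here; the only care needed is in the change of variables and in verifying that $y \in \mathrm{image}(L)$ is preserved as $y \in \mathrm{image}(A)$ under $B$ invertible.
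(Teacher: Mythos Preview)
Your proposal is correct and follows essentially the same route as the paper: substitute $z=Bx$, $A=LB^{-1}$ to reduce to standard Tikhonov regularization, use the SVD of $A$ to evaluate $(A^TA+\omega^2 I)^{-1}A^Ty$ entrywise, let $\omega\to 0$ to obtain the pseudoinverse $A^+y$, and translate back via $B^{-1}$. The only cosmetic difference is that the paper writes the computation for $Bx_\omega$ directly rather than introducing the separate symbol $z_\omega$.
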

\begin{proof}
Firstly, substituting $z = B x$ and $A = L B^{-1}$ yields
 $B x_\omega 
 = (A^T A + \omega^2 I)^{-1} A^T y$.
Now, given a SVD $A = U \Sigma V^T$, we obtain
\begin{align*}
 B x_\omega & = (V \Sigma^T \Sigma V^T + \omega^2 I)^{-1} V \Sigma^T U^T y = V (\Sigma^T \Sigma + \omega^2 I)^{-1} \Sigma^T U^T y \\
 & = V \diag((\sigma_1^2 + \omega^2)^{-1} \sigma_1,\ldots,(\sigma_m^2 + \omega^2)^{-1}  \sigma_m) U^T y \\
 & \underset{\omega \rightarrow 0}{\rightarrow} V \diag(\sigma_1^{-1},\ldots,\sigma_k^{-1},0,\ldots,0) U^T y, 
\end{align*} 
for $\mathrm{rank}(L) = k$. Thus, as the last line describes the pseudo-inverse of $A$, we have
\begin{align*}
 \lim_{\omega \rightarrow 0} x_\omega = B^{-1} \underset{z \in \R^m, \ Az = y}{\mathrm{argmin}}\ \|z\|_F = \underset{x \in \R^m, \ A (Bx) = y}{\mathrm{argmin}}\ \|Bx\|_F, 
\end{align*}
which, due to $AB = L$, provides the to be shown result. 
\end{proof}
\section*{Acknowledgments}
The author would like to thank Maren Klever and Lars Grasedyck for fruitful discussions,
as well as Paul Breiding and Nick Vannieuwenhoven for conversations on generic recoverability.

\bibliographystyle{siam}
\bibliography{IRLS_matrix_bibliography}
\endgroup
%
%
\newpage
\setcounter{section}{0}
\setcounter{equation}{0}
\setcounter{figure}{0}
\setcounter{table}{0}
\setcounter{page}{1}
\makeatletter
\renewcommand{\theequation}{SM\arabic{equation}}
\renewcommand{\thefigure}{SM\arabic{figure}}
\renewcommand{\thetable}{SM\arabic{table}}
\renewcommand{\thesection}{SM\arabic{section}}
\renewcommand{\thepage}{SM\arabic{page}}
\renewcommand{\thetheorem}{SM\arabic{theorem}}
\renewcommand{\thesubsection}{\thesection.\arabic{subsection}}
\renewcommand{\thesubsubsection}{\thesubsection.\arabic{subsubsection}}
\renewcommand{\theparagraph}{\thesubsubsection.\arabic{paragraph}}
\renewcommand{\thesubparagraph}{\theparagraph.\arabic{subparagraph}}
\makeatother


\begin{center}
    \textbf{\MakeUppercase{Supplementary Materials:}}
\end{center}
\section{Affine cardinality minimization}\label{sec:exp0}
\begin{experiment}\label{exp0}
 For $n = 160$, $|\mathrm{supp(x^{(ss)})}| = 40$ and $\ell \in \{65,75,85,95\}$, we consider
 the cardinality minimization problem based on Gaussian measurements under varying choice of the 
 weight strength $p \in \{0,0.2,0.4,0.6,0.8,1\}$ utilizing image based updates. Each constellation is repeated $1000$ times. The results are covered in \cref{tab0,res0,res0b}.
\end{experiment}%
\def\externaltablecaption#1#2#3{\label{#1}table as specified in \cref{sec:presres} for \cref{#2} (see \cref{#3} for more details)}%
\externaltable{htb!}{table_1_vector_by_p}{\externaltablecaption{tab0}{exp0}{res0b}}%
The results suggest that $p = 0$ (or at least a very small value) is here an overall optimal choice as well. This is in apparent contradiction with the results of \cite{DaDeFoGu10_Ite}, who have also considered a very similar problem setting.
Their results seem to yield that $p = 0.5$ is recommendable, whereas for lower values, convergence is not always observed. This might however be due to the different and possibly less tolerant strategy chosen for the adaption\footnote{In \cite{DaDeFoGu10_Ite}, these parameters are denoted $\tau$ and $\varepsilon$, respectively.} of $\gamma$ that does not seem suitable for smaller values of $p$ (cf. \cref{convergenceexample}).
With $\ell$ close to $\mathrm{dim}(V_S) = |\mathrm{supp(x^\rs)}| = 40$, it is not suprising that fewer references solutions are recovered, though it should theoretically be possible as long as $\ell \geq 41$ (cf. \cite{BrGeMiVa21_Alg}).
In contrast to ARM in the matrix case, \IRLS{} here also tends to fail the ACM problem for minimal numbers of measurements, possibly due to the less fortunate, only partial nestedness of $\mathcal{V}_1$ (cf. \cref{vectorV1}). 
Note that for $\ell = 40$ in turn, finding a solution with cardinality less or equal $40$ immediately becomes trivial. 
\section{Visualization of numerical results}\label{sec:visres}
Each of the following even and odd numbered pair of pages contains two related visualizations of the results of one of \cref{exp0,exp10,exp1,exp2,exp3} as summarized in \cref{metatable}. These additional visualizations are constructed as described further below.
\begin{table}[H]
 \begin{center}
  \begin{tabular}{c||c|c||c}
   experiment & $\gamma$-sensitivity & ACM/ARM/recovery & -- table\\
   \hline\hline
   \cref{exp0} & \cref{res0} & \cref{res0b} & \cref{tab0} \\
   \hline
   \cref{exp1} & \cref{res1} & \cref{res1b} & \cref{tab1} \\
   \hline
   \cref{exp10} & \cref{res10} & \cref{res10b} & \cref{tab10} \\
   \hline   
   \cref{exp2} & \cref{res2} & \cref{res2b} & \cref{tab2} \\
   \hline
   \cref{exp3} $(\overline{r}_\rs = 5)$ & \cref{res3} & \cref{res3b} & \cref{tab3} \\
   \hline
   \cref{exp3} $(\overline{r}_\rs = 7)$ & \cref{res3c} & \cref{res3d} & \cref{tab3c} \\
  \end{tabular}
 \end{center}
\caption{\label{metatable}overview over experiments, related figures and tables}
\end{table}%
\paragraph{$\gamma$-decline sensitivity}\label{par:sensitivity}
To each single trial that did not yield a failure, 
we assign the one index $k$ for which the parameter $\nu = \nu_k$ first led to a successful 
or improving run as described in \cref{sec:expsetup}. The frequencies of these indices 
as well as fails are then plotted as bars, where improvements are plotted below the x-axis.
\paragraph{ACM/ARM/recovery figures}\label{par:recfigures}
We display the following points as \textit{button plot} (as defined below).
Given the $i$-th result $X^{(\mathrm{alg})}$ 
as well as reference solution $X^{(\mathrm{rs})}$, 
the x-value of the $i$-th point is given by the bounded quotient 
\begin{align*}
 x_i = \max(0.9,\min(\mathcal{Q}_\varepsilon(X^{(\mathrm{alg})},X^{(\mathrm{rs})}),1.05)),
\end{align*}
Each y-value is given by 
\begin{align*}
 y_i = \min(\|X^{(\mathrm{alg})} - X^{(\mathrm{rs})}\|_F/\|X^{(\mathrm{rs})}\|_F,1),
\end{align*}
Note that the algorithm stops automatically if that value falls below $10^{-6}$.
\paragraph{button plot}
With a button plot (with logarithmic scale in $y$), we refer to a two dimensional, 
clustered scatter plot. Therein, any circular markers with centers $(x_i,y_i)$ 
and areas $s_i$, $i = 1,\ldots,k$, that would (visually) overlap,
are recursively combined to each one larger circle $(\widehat{x},\widehat{y})$ 
with area $\widehat{s}$ according to the appropriately weighted means
\begin{align*}
\widehat{x} = \sum_{i = 1}^k \frac{s_i}{\widehat{s}} x_i,\quad \widehat{y} = \prod_{i = 1}^k y_i^{s_i/\widehat{s}},\quad \widehat{s} = \sum_{i = 1}^k s_i. 
\end{align*}
The centers of all resulting circles are indicated as crosses. Thus, if only one circle remains, then the position of that cross is given by the arithmetic mean of all initial x-coordinates and the geometric mean of all initial y-coordinates. If no disks are combined, then their centers are the initial coordinates and their areas are all equal.
\newpage
\section{Sensitivity and ARM/recovery figures}\mbox{}
\FloatBarrier
\def\barfigurecaption#1#2{\label{#1}Results for \cref{#2} as described in \cref{par:sensitivity}.}
\def\buttonfigurecaption#1#2{\label{#1}Results for \cref{#2} as described in \cref{par:recfigures}.}
\pdffigure{htb!}{bar_1_vector_by_p}{\barfigurecaption{res0}{exp0}} 
\pdffigure{htb!}{button_1_vector_by_p}{\buttonfigurecaption{res0b}{exp0}}
\pdffigure{htb!}{bar_2_matrix_by_p}{\barfigurecaption{res1}{exp1}}
\pdffigure{htb!}{button_2_matrix_by_p}{\buttonfigurecaption{res1b}{exp1}}
\pdffigure{htb!}{bar_n1_matrix_conj}{\barfigurecaption{res10}{exp10}}
\pdffigure{htb!}{button_n1_matrix_conj}{\buttonfigurecaption{res10b}{exp10}}
\pdffigure{htb!}{bar_3_matrix_by_setting}{\barfigurecaption{res2}{exp2}}
\pdffigure{htb!}{button_3_matrix_by_setting}{\buttonfigurecaption{res2b}{exp2}}
\pdffigure{htb!}{bar_4_alt_matrix_completion_r5_by_value}{\barfigurecaption{res3}{exp3}}
\pdffigure{htb!}{button_4_alt_matrix_completion_r5_by_value}{\buttonfigurecaption{res3b}{exp3}}
\pdffigure{htb!}{bar_5_alt_matrix_completion_r7_by_value}{\barfigurecaption{res3c}{exp3}}
\pdffigure{htb!}{button_5_alt_matrix_completion_r7_by_value}{\buttonfigurecaption{res3d}{exp3}}
\FloatBarrier
\section{Properties of rational functions in diverging IRLS sequence}
\begin{lemma}\label{polyincrlemma}
Let $B(a) := \frac{2a}{2a^2 + 1}$.
 For the rational polynomials $q_1$ and $q_2$ mentioned in the proof of \cref{divexample}, it holds true that
 \begin{align}
  1 \leq a \quad \mbox{ and } \quad 0 < b \leq B(a) \label{implcond}
 \end{align}
 implies
 \begin{align*}
  a < q_1(a,b) \quad \mbox{ and } \quad 0 < q_2(a,b) < B(q_1(a,b)).
 \end{align*}
 In particular, $(a^+,b^+) := (q_1(a,b),q_2(a,b))$ again fulfills \cref{implcond}
 and 
 \begin{align*}
  (a_1,b_1) := (2,1/ 3), \quad (a_{i+1},b_{i+1}) := (q_1(a_i,b_i),q_2(a_i,b_i)), \ i \in \N, 
 \end{align*}
 defines a sequence for which $(a_i,b_i) \rightarrow (\infty,0)$, $i \rightarrow \infty$.
\end{lemma}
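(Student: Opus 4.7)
The plan is to first derive explicit rational expressions for $q_1(a,b)$ and $q_2(a,b)$, and then verify each of the three claimed inequalities by polynomial bookkeeping. The set-up comes from the IRLS-$0$ iteration on the ARM problem of \cref{divexample}: at the current iterate $X(a,b)$, the weight is $W = (X(a,b) X(a,b)^T)^{-1}$, and the next iterate $X(a^+, b^+) = X(q_1(a,b), q_2(a,b))$ is the unique element of $\mathcal{L}^{-1}(y)$ satisfying the orthogonality condition $W X(a^+, b^+) \perp \mathrm{kernel}(\mathcal{L})$ from \cref{eq:X_WWortho}. Since $\mathrm{kernel}(\mathcal{L})$ is spanned by $\partial_\alpha X(\alpha, \beta)$ and $\partial_\beta X(\alpha, \beta)$, this orthogonality becomes a $2 \times 2$ linear system for $(a^+, b^+)$, whose Cramer-rule solution expresses $q_1, q_2$ as rational functions of $(a, b)$ sharing a common denominator that is manifestly positive on the hypothesized region.

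With the explicit formulas in hand, each of the three assertions $q_1 > a$, $q_2 > 0$, and $q_2 < B \circ q_1$ reduces, after clearing the positive common denominator, to a polynomial inequality in $a$ and $b$. The hypothesis $b \leq B(a) = 2a/(2a^2 + 1)$ rewrites as the slack quantity $s(a, b) := 2a - b(2a^2 + 1) \geq 0$, and my plan is to certify each polynomial inequality as a nonnegative combination of the three facts $a - 1 \geq 0$, $b \geq 0$, and $s(a, b) \geq 0$, plus a manifestly nonnegative remainder. The sign of $q_2$ and the strict increase $q_1 > a$ should be essentially immediate once these are in play, while the bound $q_2 < B(q_1)$ is the substantive verification, as it requires a careful extraction of the $s$-factor after composing $B$ with the rational function $q_1$.

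Once the implication is established, the claim about the sequence $\{(a_i, b_i)\}$ follows by induction plus a compactness argument. The base case $(a_1, b_1) = (2, 1/3)$ is in the region since $B(2) = 4/9 > 1/3$. The implication then gives a strictly increasing $\{a_i\}$ with each $(a_i, b_i)$ again in the region, and $\{b_i\}$ bounded in $(0, B(2)]$. If $\{a_i\}$ had a finite limit $a^\infty \geq 2$, then any subsequential limit $b^\infty$ of $\{b_i\}$ would, by continuity of the update (well-defined on the region since $X(a,b)$ stays rank two for $a \geq 1$), be a fixed point of $(q_1, q_2)$ at $(a^\infty, b^\infty)$, hence a stationary point of $f_0|_{\mathcal{L}^{-1}(y)}$ by \cref{stabilizerlemma}; but by \cref{divexample} the only stationary points are $X(1, 0)$ and $X(-1, -2/3)$, neither having $a \geq 2$. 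Hence $a_i \to \infty$, and then $b_i \to 0$ follows from $0 < b_i \leq B(a_i) \to 0$.

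The main obstacle is the polynomial inequality $q_2(a, b) < B(q_1(a, b))$: after composing $B$ with the rational $q_1$ and clearing all denominators, the resulting polynomial has high total degree in $(a, b)$, and finding a decomposition that exposes the slack $s(a, b)$ cleanly will require some experimentation. All individual computations are elementary, however, so the proof is essentially mechanical once the right factorization is found.
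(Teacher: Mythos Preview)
Your overall strategy is sound and close to the paper's, but the mechanics differ in one place and there is a small gap in the divergence argument.

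\textbf{Positivity certifications.} The paper also reduces each of the three inequalities to positivity of an explicit polynomial, but instead of writing a nonnegative combination of $a-1$, $b$, and your slack $s(a,b)=2a-b(2a^2+1)$, it substitutes
\[
a = 1+c,\qquad b = \frac{B(1+c)}{1+s},\qquad c,s\in[0,\infty),
\]
which bijectively parametrizes the region $\{a\geq 1,\ 0<b\leq B(a)\}$. After clearing denominators, each inequality becomes $P(c,s)>0$ where $P$ has \emph{only nonnegative coefficients and strictly positive constant term}, so positivity is read off directly. This is operationally easier than hunting for an explicit Positivstellensatz-type decomposition, especially for the third inequality $q_2<B(q_1)$, where your anticipated ``extraction of the $s$-factor'' is replaced by a mechanical coefficient check. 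Either route works; the paper's substitution just avoids the experimentation you flag as the main obstacle.

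\textbf{Gap in the divergence step.} Your argument that a finite limit $a^\infty$ forces a fixed point of the \emph{pair} $(q_1,q_2)$ is not justified as stated. From $a_i\to a^\infty$ and $b_{i_\ell}\to b^\infty$ along a subsequence you get
\[
a^\infty=\lim a_{i_\ell+1}=\lim q_1(a_{i_\ell},b_{i_\ell})=q_1(a^\infty,b^\infty),
\]
but only $b_{i_\ell+1}\to q_2(a^\infty,b^\infty)$, which need not equal $b^\infty$. Hence you cannot directly invoke \cref{stabilizerlemma} and the stationary-point list of \cref{divexample}. The paper avoids this by using only the first-coordinate identity $a^\infty=q_1(a^\infty,b^\infty)$ and then checking that $q_1(a^\infty,b^\infty)>a^\infty$ still holds at the possible boundary value $b^\infty=0$ (a one-line computation). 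You can patch your version the same way, or else first argue $\|X^{(i)}-X^{(i-1)}\|_F\to 0$ via \cref{declinelemma}\,(iii) (legitimate here since $X(a,b)$ has full rank on the closure of the region, so $f_0$ stays bounded on compact subsets), which upgrades the subsequential limit to a genuine fixed point.
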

\begin{proof}
 Straightforward calculation reveals that
 \begin{align*}
  q_1(a,b) = \frac{(4b+4)a^3+(-b^2+12b+8)a^2+(7b^2+20b+8)a-b^2-2}{(10b^2+8b+4)a^2+(2b^2+4)a+10b^2+16b+10} =: \frac{N_1(a,b)}{D_1(a,b)}
 \end{align*}
 and
 \begin{align*}
  q_2(a,b) = \frac{(6a+6)b^3+(-4a^2+18a+7)b^2+20ab+4a-4}{(10a^2+2a+10)b^2+(8a^2+16)b+4a^2+4a+10} =: \frac{N_2(a,b)}{D_2(a,b)}.
 \end{align*}
 First, we note that $D_1(a,b) > 0$ and $D_2(a,b) > 0$ for $a > 1$, $b > 0$.
 For any fixed $a > 0$, the (rational) polynomials
 \begin{alignat*}{2}
  & t_1: [0,\infty) \rightarrow [1,\infty), \quad && t_1(c) := 1 + c, \\
  & t_2: [0,\infty) \rightarrow (0,B(a)], \quad && t_2(s) := \frac{B(a)}{1 + s}
 \end{alignat*}
 are bijections that incorporate the boundary conditions \cref{implcond}. In the following, we prove that polynomials $P$ are of shape 
 \begin{align}
  P(c,s) = \sum_{i = 0}^{I} \sum_{j = 0}^{J} k_{i,j} c^i s^j, \quad \mbox{ for } \quad k_{i,j} \geq 0,\ k_{0,0} > 0, \label{ispositive}
 \end{align}
 for $i = 0,\ldots,I$ and $j = 0,\ldots,J$ (where $k$ depends on $P$), which implies that $P(c,s) > 0$ for $c,s > 0$.
 We now show the three inequalities in analogous ways:\\
 \textit{(i)}: The inequality $a < q_1(a,b)$ is equivalent to $F_1(a,b) := N_1(a,b) - a D_1(a,b) > 0$, whereas 
 \begin{align*}
  F_1(t_1(c),t_2(s)) =: \frac{P_1(c,s)}{Q_1(c,s)}
 \end{align*}
 is a rational polynomial for which $Q_1(c,s) := (2c^2+4c+3)^2(s+1)^2$ and $P_1(c,s)$ are of shape \cref{ispositive}. Thus, $a < q_1(a,b)$,
 and in particular $a \neq q_1(a,b)$ under the given boundary conditions.\\
 \textit{(ii)}: For $0 < q_2(a,b)$, it suffices to show that $N_2(a,b) > 0$.
 This follows as likewise 
 \begin{align*}
  N_2(t_1(c),t_2(s))  =: \frac{P_2(c,s)}{Q_2(c,s)}
 \end{align*}
 is a rational polynomial for which $Q_2(c,s) := (2c^2+4c+3)^3(s+1)^3$ and $P_2(c,s)$ are of shape \cref{ispositive}.\\
 \textit{(iii)}: For the third inequality, it remains to show that $F_3(a,b) := B(q_1(a,b)) - q_2(a,b) > 0$. 
 Again, 
 \begin{align*}
  F_3(t_1(c),t_2(s))  =: \frac{P_3(c,s)}{Q_3(c,s)}
 \end{align*}
 is a rational polynomial for which (appropriately chosen) $Q_3(c,s)$ and $P_3(c,s)$ are of shape \cref{ispositive}.\\
 \textit{(iv)}: For the last part, we use that the sequence $\{a_i\}_{i \in \N}$ as defined in \cref{polyincrlemma} is monotonically increasing. Assume now that $\{a_i\}_{i \in \N}$ is bounded. Then the sequence necessarily converges to some $a^\ast > 1$. Further,
 there exists a subsequence $\{b_{i_\ell}\}_{\ell \in \N}$ of $b_i$ that converges to some $b^\ast$. 
 We then obtain $a^\ast := \lim_{\ell \rightarrow \infty} a_{i_\ell+1} = \lim_{\ell \rightarrow \infty} a_{i_\ell}
 = \lim_{\ell \rightarrow \infty} q_1(a_{i_\ell},b_{i_\ell}) = q_1(a^\ast,b^\ast)$,
 where 
 \begin{align*}
  0 \leq b^\ast = \lim_{\ell \rightarrow \infty} b_{i_\ell} \leq \lim_{\ell \rightarrow \infty} B(a_{i_\ell}) = B(a^\ast).
 \end{align*}
 As $N_1(a^\ast,0) - a^\ast D_1(a^\ast,0) = 4(a^\ast-1)^2+6(a^\ast-1) > 0$, it follows together with $(i)$ that $q_1(a^\ast,b^\ast) > a^\ast$. This is a contraction which implies that $a_i$ must diverge.
\end{proof}

\end{document}